\begin{document}%
\title[Fourier coefficients in arithmetic progressions]
      {Fourier coefficients of $GL(N)$ automorphic forms in arithmetic progressions}
\author[Emmanuel Kowalski]{Emmanuel Kowalski}
\address{ETH Z\"urich -- DMATH \\
R\"amistrasse 101 \\
8092 Z\"urich \\
Switzerland}
\email{emmanuel.kowalski@math.ethz.ch}
\author[Guillaume Ricotta]{Guillaume Ricotta}
\address{Université Bordeaux 1 \\
Institut de Mathématiques de Bordeaux \\
351, cours de la Liberation \\
33405 Talence cedex \\
France}
\email{Guillaume.Ricotta@math.u-bordeaux1.fr}


\date{Version of \today} 

\subjclass{11F11, 11F30, 11T23, 11L05, 60F05.}

\keywords{Vorono\u{\i} summation formula, generalized Bessel
  transforms, Fourier coefficients of $GL(N)$ Hecke-Maass cusp forms,
  arithmetic progressions, central limit theorem, hyper-Kloosterman sums,
  monodromy group, Sato-Tate equidistribution.}

\begin{abstract}
We show that the multiple divisor functions of integers in invertible residue classes modulo a prime number, as well as the Fourier coefficients of $GL(N)$ Maass cusp forms for all $N\geq 2$, satisfy a central limit theorem in a suitable range, generalizing the case $N=2$ treated by \'{E}. Fouvry, S. Ganguly, E. Kowalski and P. Michel in \cite{FoGaKoMi}. Such universal Gaussian behaviour relies on a deep equidistribution result of products of hyper-Kloosterman sums.
\end{abstract}

\maketitle
\tableofcontents

\section{Introduction and statement of the results}%

Problems concerning the asymptotic distribution of arithmetic
functions in residue classes are very classical in analytic number
theory, and have been considered from many different points of
view. Recently, \'{E}. Fouvry, S. Ganguly, E. Kowalski and P. Michel~\cite{FoGaKoMi}
proved that the classical divisor function, as well as Fourier
coefficients of classical (primitive) holomorphic cusp forms,
satisfies a form of central limit theorem concerning the distribution
in non-zero residue classes modulo a large prime number.
\par
It seems natural to explore the same type of statistical questions for
higher divisor functions, or Fourier coefficients of automorphic forms
on higher-rank groups, especially because of the philosophy which
relates the distribution properties of primes in arithmetic
progressions with that of higher divisor functions. We will show that
a suitable central limit theorem holds for these divisor functions as
well as for Fourier coefficients of cusp forms on $GL(N)$ for all $N\geq 2$, taken to be of
full level over $\Q$. To simplify the notation, we will not consider
holomorphic cusp forms in the case $N=2$, since this case is treated
in~\cite{FoGaKoMi}. 
\par
We remark that there are not many statements of analytic number theory
which are currently known to hold for an individual (and not on average over a family) cusp form on
$GL(N)$ for arbitrary $N$. The best known results of this type are the
approximations to the Ramanujan--Petersson--Selberg conjectures (see
the paper of Z. Rudnick, W. Luo and P. Sarnak~\cite{MR1334872}), and
the distribution properties of zeros of the standard $L$-functions for
test functions with suitable restrictions (due to Z. Rudnick and
P. Sarnak~\cite{RuSa}). The present paper adds a further example of
such properties. It is interesting to note that we require a deep
result of equidistribution of hyper-Kloosterman sums to obtain a
``universal'' Gaussian behavior, which is derived from the
determination of the monodromy groups of Kloosterman sheaves, due to
N. Katz~\cite{MR955052}. As far as we are aware, this is a new
ingredient in such studies.


\subsection{Statement of the results}%

We refer to the introduction of~\cite{FoGaKoMi} for a survey of the
literature prior to that paper, and we now state our results.  We fix
throughout an integer $N\geq 2$. 
\par
Let $p$ be an odd prime number. We will consider the group $\mathbb{F}_p^\times$
of invertible residue classes modulo $p$ as a probability space with
its uniform probability measure $\mu_p$, so that
\begin{equation*}
\forall E\subset\mathbb{F}_p^\times,\quad\mu_p(E)=\frac{\abs{E}}{p-1}.
\end{equation*}

\subsubsection{The case of $GL(N)$ Maass cusp forms}%

We fix a Hecke-Maass cusp form $f$ on $GL(N)$ with level $1$. We
denote by $a_f(m_1,\ldots,m_{N-1})$ its Fourier coefficients, for
integers $m_1$, $\ldots$, $m_{N-2}\geq 1$ and
$m_{N-1}\in\Z-\{0\}$. We also use the shorthand notation
\begin{equation}\label{eq-af}
\af(n)=a_f(n,1,\ldots, 1)
\end{equation}
for $n\geq 1$, and we recall that we then have also
\begin{equation}\label{eq-aff}
  \afs(m)=a_{f^{\ast}}(m,1,\ldots, 1)=a_f(1,\ldots, 1,m)
\end{equation}
for $m\not=0$ an integer, where $f^\ast$ is the dual of $f$. We also
assume that $f$ is arithmetically normalized so that $\af(1)=1$. In
particular, $\af(n)$ is then the eigenvalue of $f$ for the $n$-th
Hecke operator $T_n$.
\par
We will fix a test function $w:\R_+^\ast\to\R$, which is a non-zero
smooth function compactly supported on an interval
$[x_0,x_1]\subset\R_+^\ast$.  For $X\geq 1$, we then define
\begin{eqnarray*}
  S_f(X,p,a) & \coloneqq & \sum_{\substack{n\geq 1 \\
      n\equiv a\bmod{p}}}\af(n)w\left(\frac{n}{X}\right) \\
  M_f(X,p) & \coloneqq & 
  \frac{1}{p}\sum_{n\geq 1}\af(n)w\left(\frac{n}{X}\right).
\end{eqnarray*}
\par
The quantity $M_f(X,p)$ is a natural ``fake'' main term for the
quantity $S_f(X,p,a)$, which naturally occurs in the process but is
extremely small in view of the use of the smooth weight. Having in
mind that the number of terms in $S_f(X,p,a)$ is roughly $X/p$, the
\emph{square root cancellation philosophy} suggests to define
\begin{equation}\label{eq-def-ef}
E_f(X,p,a)=\frac{S_f(X,p,a)-M_f(X,p)}{(X/p)^{1/2}}
\end{equation}
for $a$ an invertible residue class modulo $p$.
\par
An important observation is that, in general, $E_f(X,p,a)$ is not
real-valued, and thus the distribution results will involve
probability measures on $\C$. More precisely, recall that $f$ is said
to be \emph{self-dual} if $f$ is equal to its dual form $f^*$, which
is equivalent with requiring that the Fourier coefficients are real
numbers, in which case $E_f(X,p,a)$ is a real number. If $f$ is not self-dual then we define
\begin{equation*}
Z_f(X,p,a)=\left(\Re{\left(E_f(X,p,a)\right)},\Im{\left(E_f(X,p,a)\right)}\right)\in\R^2.
\end{equation*}
\par
We view these quantities as \emph{random variables} $a\mapsto E_f(X,p,a)$ and \emph{random vectors} $a\mapsto Z_f(X,p,a)$ on the finite set of invertible residue classes modulo $p$ endowed with the uniform probability measure $\mu_p$ described above, and we will attempt to determine their distribution when $p$ is large, for suitable values of
$X$.
\par
We will use the method of moments to study their distribution. This allows us to prove results of interest even in
situations where we cannot currently prove an equidistribution
statement.
\par
For any pair $(\kappa,\lambda)$ non-negative integers, we define the
$(\kappa,\lambda)$-th \emph{mixed moment} of $E_f(X,p,a)$ by
\begin{equation}\label{eq-mixed-moment}
\mathsf{M}_f(X,p,(\kappa,\lambda))\coloneqq
\frac{1}{p}\sum_{\substack{a\bmod{p} \\
(a,p)=1}}E_f(X,p,a)^{\kappa}\overline{E_f(X,p,a)}^{\lambda}.
\end{equation}
The next theorem states an asymptotic expansion for these mixed
moments in specific ranges for $X$ with respect to $p$.  Before
stating it, we recall that the $k$-th moment of a centered Gaussian
random variable with variance $V=\sigma^2\geq 0$ is given by
\begin{equation*}
m_kV^{k/2}=\delta_{2\mid k}\frac{k!}{2^{k/2}(k/2)!}V^{k/2}.
\end{equation*}
\par

\begin{theoint}[Mixed moments]\label{theo_moments}
  Let $f$ be an even or odd $GL(N)$ Hecke-Maass cusp form, which is
  not induced from a holomorphic form if $N=2$, and $w:\R_+^\ast\to\R$
  be a smooth and compactly supported function. Let
  $\;2p^{N-1}<X\leq p^N$. Then we have
\begin{multline}\label{eq_theo_moments}
\mathsf{M}_f(X,p,(\kappa,\lambda))=\delta_{f=f^\ast}m_{\kappa+\lambda}\left(2c_{f,w}\right)^{(\kappa+\lambda)/2}+\delta_{f\neq f^\ast}\delta_{\kappa=\lambda}2^\kappa\kappa!c_{f,w}^{\kappa} \\
+O_{\epsilon, f}\left(p^{1+\epsilon}\left(\frac{p^{N-1}}{X}\right)^{(\kappa+\lambda)/2-1}+\left(\frac{X}{p^N}\right)^{1/2-\theta+\epsilon}+\frac{1}{\sqrt{p}}\left(\frac{p^N}{X}\right)^{\frac{\kappa+\lambda}{2}+\epsilon}\right)
\end{multline}
for all $\epsilon>0$, where $\theta=1/2-1/(N^2+1)$ and $c_{f,w}>0$ is a constant given by
\begin{equation*}
  c_{f,w}=\frac{r_fH_{f,f^\ast}(1)}{2}\abs{\abs{w}}_2^2,
\end{equation*}
where $r_f$ is the residue at $s=1$ of the Rankin-Selberg $L$-function
$L(f\times f^{\ast},s)$ (see Proposition \ref{propo_residue}), $\|w\|_2$ is the $L^2$-norm of $w$ with respect to the Lebesgue measure on $\R_+^\ast$, and
$H_{f,f^{\ast}}(1)$ is an Euler product defined in Proposition
\ref{propo_variance}. 
\end{theoint}

In this theorem, the error term in \eqref{eq_theo_moments} only tends
to $0$ as $X$ and $p$ tend to infinity in suitable ranges. For
instance, if $X=p^\gamma$ with $N-1<\gamma<N$, then this theorem
implies that
\begin{equation}\label{eq-limit-moments}
  \lim_{p\to+\infty}\mathsf{M}_f(p^\gamma,p,(\kappa,\lambda))=
  \delta_{f=f^\ast}m_{\kappa+\lambda}\left(2c_{f,w}\right)^{(\kappa+\lambda)/2}+
  \delta_{f\neq f^\ast}\delta_{\kappa=\lambda}2^\kappa\kappa!c_{f,w}^{\kappa},
\end{equation}
for all $\kappa$ and $\lambda$ such that
$\kappa+\lambda<\frac{1}{N-\gamma}$.
\par
The most restrictive error term in \eqref{eq_theo_moments} is the last
one, which we will see comes from deep equidistribution theorems of
hyper-Kloosterman sums. One can expect that the estimate for this term
is not best possible, and that the asymptotic formula for all moments
should be valid when $X=p^\gamma$ with $N-1<\gamma<N$. This seems to
be a rather difficult problem.
\par
Nevertheless, the limit holds for all moments when $X$ is a suitable
function of $p$, and standard techniques from probability theory then
lead to central limit theorems for the random variables
$E_f(X,p,\ast)$ and $Z_f(X,p,\ast)$ for such functions $X$.

\begin{corint}[Central limit theorems]\label{coro_law}
  Let $f$ be an even or odd $GL(N)$ Hecke-Maass cusp form, which is
  not induced from a holomorphic form if $N=2$, and $w:\R_+^\ast\to\R$
  be a smooth and compactly supported function. Let $X=p^N/\Phi(p)$
  for a function $\Phi:[2,+\infty[\to[1,+\infty[$ satisfying
\begin{equation*}
\lim_{x\to+\infty}\Phi(x)=+\infty\;\;\;\text{ and }\;\;\;\Phi(x)=O_\epsilon(x^\epsilon)
\end{equation*}
for all $\epsilon>0$.
\begin{itemize}
\item If $f$ is self-dual then the sequence of random variables
  $E_f(X,p,\ast)$ converges in law to a centered Gaussian random
  variable with variance $2c_{f,w}$, as $p$ goes to
  infinity among the prime numbers. In other words, for all real
  numbers $\alpha<\beta$, we have
\begin{equation*}
\lim_{\substack{p\in\prem \\
p\to+\infty}}\frac{1}{p-1}\left\vert\left\{a\in\mathbb{F}_p^\times, \alpha\leq E_f(X,p,a)\leq\beta\right\}\right\vert=\frac{1}{\sqrt{2\pi\times 2c_{f,w}}}\int_{x=\alpha}^\beta \exp{\left(-\frac{x^2}{2\times 2c_{f,w}}\right)}\dd x.
\end{equation*} 
\item
If $f$ is not self-dual then the sequence of random vectors
$Z_f(X,p,\ast)$ converges in law to a Gaussian random vector with covariance matrix
\begin{equation}\label{eq_covariance}
c_{f,w}\begin{pmatrix}
1 & 0 \\
0 & 1
\end{pmatrix}
\end{equation}
as $p$ goes to infinity among the prime numbers. In other words, for
real numbers $\alpha_1<\beta_1$ and $\alpha_2<\beta_2$, we have
\begin{multline*}
  \lim_{\substack{p\in\prem \\
      p\to+\infty}}\frac{1}{p-1}\left\vert\left\{a\in\mathbb{F}_p^\times,
      Z_f(X,p,a)\in\left[\alpha_1,\beta_1\right]
      \times\left[\alpha_2,\beta_2\right]\right\}\right\vert \\
  =\frac{1}{2\pi
    c_{f,w}}\int_{x=\alpha_1}^{\beta_1}\int_{y=\alpha_2}^{\beta_2}
  \exp{\left(-\frac{x^2+y^2}{2 c_{f,w}}\right)}\dd x\dd y.
\end{multline*}
\end{itemize}
\end{corint}

\begin{remark}
(1)  The same central limit theorem would follow for all $X$ with
  $2p^{N-1}<X<p^N$ if one could prove that
  the limit~(\ref{eq-limit-moments}) holds for all $\kappa$ and
  $\lambda$ in that range. At the very least, for $X=p^{\gamma}$ with
  $N-1<\gamma<N$, we obtain convergence of all moments up to
  $\kappa+\lambda<\frac{1}{N-\gamma}$. 
\par
(2) It is a very interesting question whether one can establish this
result with the smooth weight $w(n/X)$ replaced by a characteristic
function of $1\leq n\leq X$. For $N=2$, Lester and
Yesha~\cite[Th. 1.1, Th. 1.2]{lester-yesha} have recently shown that
this can be done. This is a non-trivial fact, which has not been
extended to $N\geq 3$ at the moment.
\end{remark}

\subsubsection{The case of the multiple divisor functions}%
\label{ssec-dn}

The same techniques also enable us to study a similar problem for the
$N$-th multiple divisor function $d_N$. The only notable
difference is the existence of a significant main term.
\par
Thus, for an invertible residue class $a$ in
$\mathbb{F}_p^\times$, we define
$$
E_{d_N}(X,p,a)=\frac{S_{d_N}(X,p,a)-M_{d_N}(X,p)}{(X/p)^{1/2}},
$$ 
where
\begin{align}\label{eq-def-dn1}
S_{d_N}(X,p,a) & = \sum_{\substack{n\geq 1 \\
n\equiv a\bmod{p}}}d_N(n)w\left(\frac{n}{X}\right) \\
\label{eq-def-dn2}
M_{d_N}(X,p) & = \frac{1}{p}\sum_{n\geq
  1}d_N(n)w\left(\frac{n}{X}\right)-\frac{1}{p^2}\int_{x=0}^{+\infty}\sum_{k=1}^N\frac{\beta_k(p)}{(k-1)!}\log^{k-1}{(x)}w(x)\dd
x
\end{align}
where $w:\R_+^\ast\to\R$ is again a fixed non-zero smooth function
compactly supported on $[x_0,x_1]\subset\R_+^\ast$ and $\beta_k(p)$
are certain coefficients that we will define precisely in
Section~\ref{sec-dn}. Once again,
the normalisation is suggested by the square root cancellation philosophy.
\par
We will study the convergence in law of the sequence of random
variables $a\mapsto E_{d_N}(X,p,a)$ on $\mathbb{F}_p^{\times}$ endowed with its uniform probabiblity measure $\mu_p$.
\par
For $\kappa$ a non-negative integer, let us define the $\kappa$-th \emph{moment} of $E_{d_N}(X,p,a)$ by
\begin{equation*}
\mathsf{M}_{d_N}(X,p,\kappa)\coloneqq\frac{1}{p}\sum_{\substack{a\bmod{p} \\
(a,p)=1}}E_{d_N}(X,p,a)^{\kappa}.
\end{equation*}
The next theorem states an asymptotic expansion for these moments in
specific ranges for $X$ with respect to $p$.

\begin{theoint}[Moments for $d_N$]\label{theo_moments_d_N}
Let $N\geq 3$ and $w:\R_+^\ast\to\R$ be a smooth and compactly supported function. If $\;2p^{N-1}<X\leq p^N$ then
\begin{multline}\label{eq_theo_moments_d_N}
  \mathsf{M}_{d_N}(X,p,\kappa)=m_{\kappa}Q\Bigl(\log
    \frac{p^N}{X}\Bigr)^{\kappa/2}
\\
  +O_{\epsilon,
    f}\left(p^{1+\epsilon}\left(\frac{p^{N-1}}{X}\right)^{\kappa/2-1}+\left(\frac{X}{p^N}\right)^{1/2+\epsilon}+\frac{1}{\sqrt{p}}\left(\frac{p^N}{X}\right)^{\frac{\kappa}{2}+\epsilon}\right)
\end{multline}
for all $\epsilon>0$, where $Q$ is a polynomial of degree $N^2-1$ with
leading coefficient 
\begin{equation*}
  \frac{\abs{\abs{w}}_2^2}{(N^2-1)!}
  \prod_{q\text{ prime}}\Bigl(1-\frac{1}{q}\Bigr)^{(N-1)^2}
\times  \sum_{k=0}^{N-1}\binom{N-1}{k}^2p^{-k}
\end{equation*}
as a leading coefficient, where $\|w\|_2$ is the $L^2$-norm of $w$
with respect to the Lebesgue measure on $\R_+^\ast$.
\end{theoint}

\par
As in the case of Maass cusp forms, we deduce a central limit theorem for $E_{d_N}(X,p,a)$ for a large class of functions $X$.
\begin{corint}[Central limit theorems for $d_N$]\label{coro_law_d_N}
Let $N\geq 3$ and $w:\R_+^\ast\to\R$ be a smooth and compactly supported function. Let $X=p^N/\Phi(p)$ for a function $\Phi:[2,+\infty[\to[1,+\infty[$ satisfying
\begin{equation*}
\lim_{x\to+\infty}\Phi(x)=+\infty\;\;\;\text{ and }\;\;\;\Phi(x)=O_\epsilon(x^\epsilon)
\end{equation*}
for all $\epsilon>0$. Let
$$
H= \prod_{q\text{ prime}}\Bigl(1-\frac{1}{q}\Bigr)^{(N-1)^2}
\times  \sum_{k=0}^{N-1}\binom{N-1}{k}^2p^{-k}>0.
$$
\par
The sequence of random variables
\begin{equation*}
\frac{E_{d_N}(X,p,\ast)}{\sqrt{\frac{H\abs{\abs{w}}_2^2\log^{N^2-1}{\left(\Phi(p)\right)}}{(N^2-1)!}}}
\end{equation*}
converges in law to a centered Gaussian random variable, whose variance is $1$, as $p$ goes to infinity among the prime numbers. In other words, for all real numbers $\alpha<\beta$, we have
\begin{equation*}
\lim_{\substack{p\in\prem \\
p\to+\infty}}\frac{1}{p-1}\left\vert\left\{a\in\mathbb{F}_p^\times, \alpha\leq \frac{E_{d_N}(X,p,a)}{\sqrt{\frac{H_N(1)\abs{\abs{w}}_2^2\log^{N^2-1}{\left(\Phi(p)\right)}}{(N^2-1)!}}}\leq\beta\right\}\right\vert=\frac{1}{\sqrt{2\pi}}\int_{x=\alpha}^\beta \exp{\left(-\frac{x^2}{2}\right)}\dd x.
\end{equation*} 
\end{corint}

\begin{remark}
  As a final remark, we note that it is possible to extend the Central
  Limit Theorems, for cusp forms as well as for $d_N$, to restrict the
  average over residue classes $a$ which are considered, in either of
  the following manners (which may be combined):
\begin{itemize}
\item We may assume that $a$ ranges over the reduction modulo $p$ of
  an interval $I_p$ of integers of length $p^{1/2+\delta}\ll |I_p|\leq
  p-1$ for any fixed $\delta>0$;
\item We may assume that $a$ ranges over the set $f(\mathbb{F}_p)$ of
  values in $\mathbb{F}_p$ of a fixed non-constant polynomial
  $f\in\Z[X]$, for instance that $a$ is restricted to be a quadratic
  residue. 
\end{itemize}
\par
This essentially only requires an extension of the results of
Section~\ref{sec-katz}, as recently discussed by \'E. Fouvry,
E. Kowalski and Ph. Michel in~\cite[Section 5.3]{FoKoMi22}.
\end{remark}
\subsection{Strategy of the proof}%

The basic strategy follows that of \'{E}. Fouvry, S. Ganguly,
E. Kowalski and P. Michel~\cite{FoGaKoMi} of applying the Vorono\u{\i}
summation formula, followed by equidistribution theorems for
hyper-Kloosterman sums. There is a significant increase in complexity
due to the context of $GL(N)$ automorphic forms, and to the fact that
the Fourier coefficients are not always real-valued. However, more
importantly, a number of crucial facts which could be checked
relatively easily by direct calculations or ad-hoc methods in the
$GL(2)$ case (for holomorphic forms or the divisor function) require
much more intrinsic arguments. This is the case for instance of the
unitarity of the integral transform that appears in the Vorono\u{\i}
summation formula, and also of certain multiplicity computations from
representation theory which seem very difficult to handle with
explicit integrals. In addition, the computation of the limiting
variance is surprisingly delicate, and indeed is the only place where
we need to invoke an upper-bound for the Fourier coefficients of $f$,
which goes beyond the ``local'' Jacquet-Shalika bound, that follows
from genericity of the local representations.
\par
The most technical part of the argument is contained in
Section~\ref{sec_7}, where we obtain the asymptotic expansion of the
moments. However, the idea underlying this computation can be
motivated using probabilistic analogies, and we do this at the
beginning of that section.

\subsection{Organisation of the paper}%
The general background on $GL(N)$ Maass cusp forms is given in Section
\ref{sec_2}. The Vorono\u{\i} summation formula is introduced in
Section \ref{sec_3}, which also contains the analytic and the
unitarity properties of the generalized Bessel transforms occuring in
this summation formula. The algebraic ingredient required to prove the
crucial equidistribution result for products of hyper-Kloosterman sums
is done in Section \ref{sec-katz}. The technical ingredient needed to
achieve the variance computation is proved in Section
\ref{sec_variance}. The first steps in the proof of Theorem
\ref{theo_moments}, such as the input of the Vorono\u{\i} summation
formula, are done in Section \ref{sec_6}. The combinatorial analysis
in the proof of Theorem \ref{theo_moments} appears in Section
\ref{sec_7}. Corollary \ref{coro_law} is proved in Section \ref{sec_8} and Theorem \ref{theo_moments_d_N} in Section \ref{sec_9}. The general properties of Maass cusp forms, which are stated in Section \ref{sec_2} without proof in \cite{MR2254662}, are proved in Appendix \ref{sec_A}. A generating series involving a product of Schur polynomials (respectively a product of multiple divisor functions) is studied in Appendix \ref{sec_B} (respectively in Appendix \ref{sec_C}). 
\begin{notations}

As already mentioned, $N\geq 2$ is a fixed integer. We denote
$e(z)\coloneqq\exp(2i\pi z)$ for $z\in\C$. The sign of a non-zero real number $x$
is denoted $\sgn(x)\in \{-1,1\}$.
\par
$\prem$ stands for the set of prime numbers. The main parameters in
this paper are an \emph{odd} prime number $p$, which goes to infinity
among $\prem$ and a positive real number $X$, which goes to infinity
with $p$. Thus, if $f$ and $g$ are some $\C$-valued functions on
$\R^2$ then the symbols $f(p,X)\ll_{A}g(p,X)$ or equivalently
$f(p,X)=O_A(g(p,X))$ mean that $\abs{f(p,X)}$ is smaller than a
constant, which only depends on $A$, times $g(p,X)$ at least for $p$ a
large enough prime number.
\par
The Mellin transform of a function $\;\psi:\R_+^\ast\to\C$ is denoted
$\mathcal{M}[\psi]$ and is given by
\begin{equation*}
\mathcal{M}[\psi](s)=\int_{x=0}^{+\infty}\psi(x)x^s\frac{\dd x}{x}
\end{equation*}
for all complex numbers $s$ for which the integral exists. If $G$ is a
holomorphic function defined for $s\in \C$ with real part
$>\sigma_0\geq -\infty$ and with fast enough decay as the imaginary
part grows, then $\mathcal{M}^{-1}[G]$ will denote its inverse Mellin
transform defined by
\begin{equation*}
\mathcal{M}^{-1}[G](x)=\frac{1}{2i\pi}\int_{(\sigma)}G(s)\frac{\dd s}{x^s}
\end{equation*}
for a fixed $\sigma>\sigma_0$ and for all positive real number $x$.
\par
If $\;E$ is a finite set then $\vert E\vert$ stands for its cardinality.
\par
If $\mathcal{Q}$ is an assertion then the Kronecker symbol
$\delta_{\mathcal{Q}}$ equals $1$ if $\mathcal{Q}$ is true and $0$
otherwise.
\end{notations}

\begin{merci}
\par
The authors would like to thank Valentin Blomer, Farrell Brumley, \'{E}tienne Fouvry,
Dorian Goldfeld and Henry Gould for stimulating exchange related to
this project.
\par
We heartily thank the referee for a very careful reading of the
manuscript; the long list of constructive suggestions contained in his
or her report greatly improved the presentation.
\par
The research of G. Ricotta is supported by a Marie Curie Intra
European Fellowship within the 7th European Community Framework
Programme. The grant agreement number of this project, whose acronym
is ANERAUTOHI, is PIEF-GA-2009-25271. He would like to thank ETH and
its entire staff for the excellent working conditions. His interest in
the analytic theory of $GL(N)$ automorphic forms arose throughout the
reading of the very explicit book writen by D. Goldfeld \cite{MR2254662} and reached his peak after the stimulating founding
workshop ``Analytic Theory of $GL(3)$ automorphic forms and
applications'' at the American Institute of Mathematics in November
2008. He would like to thank the organisers (H. Iwaniec, P. Michel and
K. Soundararajan) for their kind invitation. Last but not least, he
would like to express his gratitude to K. Belabas for his crucial
support among the Number Theory research team A2X (Institut de
Mathématiques de Bordeaux).
\end{merci}

\section{Quick review of $GL(N)$ automorphic forms}\label{sec_2}%
A convenient reference for this section is \cite{MR2254662}. Let $f$
be a $GL(N)$ Hecke-Maass cusp form of level $1$ and let $f^\ast$ be
its dual. If $N=2$, we require (for convenience) that the corresponding
classical modular form is not holomorphic. 
\par
For positive integers $m_1,\dots,m_{N-2}$ and a non-zero integer
$m_{N-1}$, we denote by 
$$
a_f\left(m_1,\dots,m_{N-1}\right)
$$ 
the $\left(m_1,\dots,m_{N-1}\right)$'th Fourier coefficient of $f$. We
assume that $f$ is arithmetically normalized, namely
$a_f(1,\dots,1)=1$. Since $f$ is a Hecke eigenform, the multiplicity
$1$ theorem shows that $f$ is either even or odd, i.e.,
that
\begin{equation}\label{eq_even}
a_f\left(m_1,\dots,-m_{N-1}\right)=\epsilon_f a_f\left(m_1,\dots,m_{N-1}\right)
\end{equation}
where
\begin{equation}\label{eq_parity}
\epsilon_f\coloneqq\begin{cases}
+1 & \text{if $f$ is even,} \\
-1 & \text{if $f$ is odd}
\end{cases}
\end{equation}
by \cite[Proposition 9.2.5, Proposition 9.2.6]{MR2254662}. More precisely, a $GL(N)$ Maass cusp form of level $1$ is always a linear combination of an even and an odd one by \cite[Definition 9.2.4]{MR2254662}. If $N$ is
odd, then it is known that a $GL(N)$ Maass cusp form of level $1$ is always even (see~\cite[Proposition
6.3.5]{MR2254662}). If $N$ is even and $f$ is a $GL(N)$ Hecke-Maass cusp form of level $1$ then one can check that $f$ and $K(f)$ defined by
\begin{equation*}
K(f)(z)\coloneqq f\left(\text{diag}(-1,1,\dots,1)z\right)
\end{equation*}
for $z$ in the generalized upper-half plane have the same Hecke eigenvalues (this follows from the fact that $K$
commutes with the Hecke algebra). Since $K$ is an involution, the
multiplicity $1$ theorem implies the result (see~\cite[Theorem
6.28]{Iw} and ~\cite[Section 9.2]{MR2254662} for more details).
\par
The Fourier coefficients satisfy the Ramanujan-Petersson bound on
average, by Rankin-Selberg theory. Recall that the Rankin-Selberg
$L$-function of $f$ and another $GL(N)$ Hecke-Maass cusp form $g$
of level $1$ is the Dirichlet series
\begin{equation*}
L(f\times g,s)=\zeta(Ns)\sum_{m_1,\dots,m_{N-1}\geq 1}\frac{a_f\left(m_1,\dots,m_{N-1}\right)a_g\left(m_1,\dots,m_{N-1}\right)}{(m_1^{N-1}m_2^{N-2}\dots m_{N-1})^s}.
\end{equation*}
This $L$-function has an analytic continuation to $\C$ if $g\neq
f^\ast$, and a meromorphic continuation to $\C$ with a simple pole at
$s=1$ if $g=f^{\ast}$ (see \cite[Theorem 12.1.4]{MR2254662}). The
residue of $L(f\times f^{\ast},s)$ at $s=1$ is denoted $r_f$. It is a
positive real number, and it may be expressed in terms of invariants
of $f$, see Proposition \ref{propo_residue}.
\par
The Rankin-Selberg $L$-function has also an Euler product of degree
$N^2$ given by
\begin{equation}\label{eq_Euler}
L(f\times g,s)=\prod_{q\in\prem}\prod_{1\leq j,k\leq N}
\left(1-\frac{\alpha_{j,q}(f)\alpha_{k,q}(g)}{q^s}\right)^{-1}
\end{equation}
by \cite[Proposition 12.1.3]{MR2254662} where the $\alpha_{j,q}(f)$'th
are the complex roots of the monic polynomial
\begin{equation}\label{eq_alpha}
X^N+\sum_{\ell=1}^{N-1}(-1)^{\ell}
a_f(\overbrace{1,\dots,1}^{\text{$\ell-1$
    terms}},q,1,\dots,1)X^{N-\ell}+ (-1)^N\in \C[X]
\end{equation}
(where the Fourier coefficient corresponding to $\ell$ has index $q$
at the $\ell$-th position).
\par
For a prime number $q$, we denote for convenience
\begin{equation}\label{eq_alphap}
\alpha_{q}(f)\coloneqq\left\{\alpha_{j,q}(f), 1\leq j\leq N\right\}
\end{equation}
and we remark that \eqref{eq_alpha} and \eqref{eq_Fourier_dual}
imply that
\begin{equation*}
\alpha_q(f^\ast)=\left\{\overline{\alpha_{j,q}(f)}, 1\leq j\leq N\right\}.
\end{equation*}
From \eqref{eq_alpha}, we also find that
\begin{equation}\label{eq_alpha_sym}
a_f(\overbrace{1,\dots,1}^{\text{$\ell-1$}},q,1,\dots,1)
=e_\ell(\alpha_q(f))
\coloneqq\sum_{1\leq j_1<\dots<j_\ell\leq N}\alpha_{j_1,q}(f)\dots\alpha_{j_\ell,q}(f)
\end{equation}
for $1\leq\ell\leq N-1$. More generally, it follows from the works of
Shintani and of Casselman--Shalika (see also \cite[Proposition
5.1]{Zh}) that, for a prime number $q$ and $N-1$ non-negative integer
$k_1,\dots,k_{N-1}$, we have
\begin{equation}\label{eq_Fourier_schur}
a_f\left(q^{k_1},\dots,q^{k_{N-1}}\right)=S_{k_{N-1},\dots,k_1}\left(\alpha_{1,q}(f),\dots,\alpha_{N,q}(f)\right) 
\end{equation}
where
\begin{equation}\label{eq_def_schur}
S_{k_{N-1},\dots,k_{1}}\left(x_1,\dots,x_N\right)=\frac{1}{V(x_1,\dots,x_N)}
\det\begin{pmatrix}
x_1^{N-1+k_{N-1}+\dots+k_{1}} & \dots & x_N^{N-1+k_{N-1}+\dots+k_{1}} \\
\vdots & \vdots & \vdots \\
x_1^{2+k_{N-1}+k_{N-2}} & \dots & x_N^{2+k_{N-1}+k_{N-2}} \\
x_1^{1+k_{N-1}} & \dots & x_N^{1+k_{N-1}} \\
1 & \dots & 1
\end{pmatrix}
\end{equation}
is a Schur polynomial and where $V(x_1,\dots,x_N)$ stands for the usual
Vandermonde determinant
\begin{equation*}
V(x_1,\dots,x_N)\coloneqq\prod_{1\leq i<j\leq N}\left(x_i-x_j\right).
\end{equation*}
\par
We will need the following property, which we will explain in
Proposition~\ref{propo_schur}: there exist polynomials
$P_N(\uple{x},\uple{y},T)$, where $\uple{x}=(x_1,\ldots, x_N)$ and
$\uple{y}=(y_1,\ldots, y_N)$ are indeterminates, such that
\begin{equation}\label{eq-schur}
\sum_{k\geq 0}S_{0,\dots,0,k}(\uple{x})S_{0,\dots,0,k}(\uple{y})T^k=\frac{P_N(\uple{x},\uple{y},T)}{\prod_{1\leq j,k\leq N}\left(1-x_jy_kT\right)}
\end{equation}
\par
The analytic properties of the Rankin-Selberg $L$-functions are known
to imply that
\begin{equation}\label{eq_RP_average}
\sum_{\substack{m_1,\dots,m_{N-1}\geq 1 \\
m_1^{N-1}m_2^{N-2}\cdots m_{N-1}\leq X}}
\abs{a_f\left(m_1,\dots,m_{N-1}\right)}^2\ll_{\epsilon, f} X^{1+\epsilon}
\end{equation}
for all real number $X\geq 1$ and $\epsilon>0$. This bound on average
for the Fourier coefficients of $f$ is strong enough in all the
analytic estimates in this work, except when computing the variance in
Section \ref{sec_variance}, which requires a non-trivial individual
bound for Satake parameters which is stronger than what is implied by
this bound.
\par
More precisely, recall that W. Luo, Z. Rudnick and P. Sarnak have
proved in \cite{MR1334872} and \cite{MR1703764} that
\begin{equation}\label{eq_RP}
  \max_{1\leq j\leq N}{\abs{\alpha_{j,q}(f)}}\leq q^{1/2-1/(N^2+1)}
\end{equation}
for all Hecke-Maass cusp forms $f$ of level $1$ and all prime numbers
$q$. (The Ramanujan-Petersson conjecture claims that this should hold
with $1$ on the right-hand side, and the Jacquet-Shalika local bound
shows that it does with $q^{1/2}$ instead).
\par
By \cite[Theorem 9.3.11]{MR2254662}, the Fourier coefficients of $f$ satisfy the multiplicativity relations
\begin{equation}\label{eq_multiplicative}
  a_f(m,1,\dots,1)a_f\left(m_1,\dots,m_{N-1}\right)=\sum_{\substack{\prod_{\ell=1}^Nc_\ell=m \\
      c_j\mid m_j (1\leq j\leq N-1)}}a_f\left(\frac{m_1c_N}{c_1},\frac{m_2c_1}{c_2},\dots,\frac{m_{N-1}c_{N-2}}{c_{N-1}}\right)
\end{equation}
for positive integers $m, m_1,\dots,m_{N-2}$ and non-zero integer
$m_{N-1}$ (there is no typo), as well as
\begin{equation}\label{eq_multiplicative_bis}
a_f\left(m_1m_1^\prime,\dots,m_{N-1}m_{N-1}^\prime\right)=a_f\left(m_1,\dots,m_{N-1}\right)a_f\left(m_1^\prime,\dots,m_{N-1}^\prime\right)
\end{equation}
for positive integers $m_1, m_1^\prime,\dots, m_{N-2},\ m_{N-2}^\prime$,
and non-zero integers $m_{N-1}, m_{N-1}^\prime$ such that
$$
\left(m_1\dots m_{N-1},m_1^\prime\dots m_{N-1}^\prime\right)=1.
$$
\par
We also mention that, for positive integers $m_1,\dots, m_{N-1}$, we
have
\begin{equation}\label{eq_Fourier_dual}
a_{f^\ast}(m_1,\dots,m_{N-2},m_{N-1})=a_{f}(m_{N-1},m_{N-2},\dots,m_{1})
\end{equation}
by \cite[Theorem 9.3.11, Addendum]{MR2254662}. Using the fact that $f$
is a Hecke eigenfunction, one derives by Möbius inversion the relation
\begin{equation}\label{eq_Fourier_coeffs}
a_f\left(m_1,\dots,m_{N-2},m_{N-1}\right)=
\overline{a_f\left(m_{N-1},m_{N-2},\dots,m_1\right)},
\end{equation} 
(see \cite[Theorem 9.3.6, Theorem 9.3.11, Addendum]{MR2254662}) and in
particular, we see that the Fourier coefficients of $f$ are real if
$f$ is self-dual, i.e., if $f=f^{\ast}$. Recalling the
definition~(\eqref{eq-af} and \eqref{eq-aff}), we see that
\begin{equation}\label{eq-afst}
  \overline{\af(m)}=\overline{a_f(m,1,\ldots,1)}=
  a_f(1,\ldots, 1,m)=\afs(m)
\end{equation}
for $m\geq 1$.
\par
We now consider analogues of some of these properties at the infinite
place. We denote by 
$$
\nu(f)=(\nu_1(f),\dots,\nu_{N-1}(f))\in\C^{N-1}
$$
the \emph{type} of $f$. The components of the type of $f$ are complex
numbers characterized by the property that, for every invariant
differential operator $D$ in the center of the universal enveloping
algebra of $GL(N,\R)$, the cusp form $f$ is an eigenfunction of $D$
with the same eigenvalue as the power function $I_{\nu(f)}$ which is
defined in \cite[Equation (5.1.1)]{MR2254662}.
\par
On the other hand, we denote by
$$
\alpha_\infty(f)=\left\{\alpha_{j,\infty}(f), 1\leq j\leq N\right\}
$$
the Langlands parameters\footnote{For reference, we note that
  $\alpha_{j,\infty}(f)$ is denoted $\lambda_j(\nu)$ and
  $\alpha_{j,\infty}(f^\ast)$ is denoted $\widetilde{\lambda_j}(\nu)$
  in \cite{MR2254662}.}  of $f$.
\par
The Langlands parameters are obtained as a set of affine combinations
of the coefficients of the type. They satisfy
\begin{equation}\label{eq_sum_alpha}
\sum_{j=1}^N\alpha_{j,\infty}(f)=0.
\end{equation}
and
\begin{equation}\label{eq_Langlands_fact}
\alpha_{\infty}(f^\ast)=-\alpha_{\infty}(f)
\end{equation}
since the type of $f^\ast$ is
$\nu(f^\ast)=(\nu_{N-1}(f),\nu_{N-2}(f),\dots,\nu_1(f))$ (see
\cite[Proposition 9.2.1]{MR2254662}).
\par
We also have the unitarity property (see~\cite[Equation A.2]{RuSa})
\begin{equation}\label{eq_unitaricity}
\alpha_\infty(f)=-\overline{\alpha_{\infty}(f)}
\end{equation}
or equivalently
\begin{equation}\label{eq_unitaricity_2}
\alpha_\infty(f^\ast)=\overline{\alpha_{\infty}(f)}
\end{equation}
by \eqref{eq_Langlands_fact}. It is known that
\begin{equation}\label{eq_Selberg}
  \max_{1\leq j\leq N}{\abs{\Re{\left(\alpha_{j,\infty}(f)\right)}}}\leq
  \frac{1}{2}-\frac{1}{N^2+1},
\end{equation}
(see~\cite{MR1334872,MR1703764}), and this analogue of~\eqref{eq_RP}
will also be used.

\section{Generalized Bessel transforms for $GL(N)$}\label{sec_3}%
\subsection{The Vorono\u{\i} summation
  formula}%

The first case of the Vorono\u{\i} summation formula beyond $GL(2)$ is
due to S.J. Miller and W. Schmid, for $GL(3)$ cusp forms (see
\cite{MR2247965}; note that according to \cite[Section 1.2]{IcTe} and
\cite[Page 4]{MR2418857}, P. Sarnak and T. Watson had developed before
a version of the Vorono\u{\i} summation formula for $GL(3)$ for prime
denominators). D. Goldfeld and X. Li developed a Vorono\u{\i}
summation formula for $GL(N)$ for prime denominators in
\cite{MR2233713} and for general denominators in
\cite{MR2418857}.
Independently, 
S.J. Miller and W. Schmid found a more general version of the
Vorono\u{\i} summation formula for $GL(N)$ in \cite{MR2882444}.

The version we use is both a particular case and a slightly
renormalized version of the formulas given in \cite[Theorem
4.1]{MR2233713} and in \cite[Theorem 1]{IcTe}, which, among other
things, takes into account the properties \eqref{eq_even} and
\eqref{eq_Fourier_coeffs} satisfied by the Fourier coefficients of
$f$.

In order to state the formula, we first define the required integral
transforms.

Given an $N$-tuple $\uple{\alpha}=(\alpha_1,\ldots,\alpha_N)$ of
complex numbers and an integer $k\in \{0,1\}$, we denote
\begin{equation*}
  \fGamma_{k,\uple{\alpha}}(s)\coloneqq\prod_{1\leq j\leq N}\fGamma_\R\left(s+\alpha_j+k\right)
\end{equation*}
where $\fGamma_\R(s)\coloneqq\pi^{-s/2}\fGamma(s/2)$ for all complex
number $s$. We write
$\uple{\alpha}^{\ast}=\overline{\uple{\alpha}}=(\overline{\alpha_j})_{1\leq j\leq N}$.

Given a smooth function $w$ with compact support on $\R_+^{\ast}$,
we then define
\begin{align}
\mathcal{B}_{k,\uple{\alpha}}[w](x) & \coloneqq\frac{1}{2i\pi}
\int_{(\sigma)}\frac{\fGamma_{k,\uple{\alpha}}(s)}
{\fGamma_{k,\uple{\alpha}^{\ast}}(1-s)}
\mathcal{M}[w](1-s)\frac{\dd s}{x^s} \label{eq_Mellin_psi} \\
& =\mathcal{M}^{-1}
\left[s\mapsto\frac{\fGamma_{k,\uple{\alpha}}(s)}
{\fGamma_{k,\uple{\alpha}^{\ast}}(1-s)}
\mathcal{M}[w](1-s)\right](x) \label{eq_Mellin_psi_2}
\end{align}
for all positive real number $x$ and $\sigma>\max_{1\leq j\leq
  N}{(-\Re{(\alpha_{j})})}$, and
\begin{equation}\label{eq_CL}
  \mathcal{B}_{\uple{\alpha}}^{\pm}[w]
  \coloneqq\frac{1}{2}
  \left(\mathcal{B}_{0,\uple{\alpha}}[w]
    \mp\frac{1}{i^N}\mathcal{B}_{1,\uple{\alpha}}[w]\right),
\end{equation}
which are functions defined for $x>0$, and finally
\begin{equation}\label{eq_CL_2}
\mathcal{B}_{\uple{\alpha}}[w](x)\coloneqq
\mathcal{B}_{\uple{\alpha}}^{\sgn(x)}[w](\abs{x})
\end{equation}
for all non-zero real numbers $x$.

Moreover, we recall the definition of hyper-Kloosterman sums. For $r
\geq 1$ a positive integer, $\mathbb{F}$ a finite field of characteristic $p$
with $\abs{\mathbb{F}}=q$ and $u\in\mathbb{F}$, we denote
\begin{equation}\label{eq_Kloos_def}
  K_{r}(u,q)=\frac{1}{q^{\frac{r-1}{2}}}\sum_{\substack{(x_1,\dots,x_{r})\in
      (\mathbb{F}^{\ast})^{r} \\
      x_1\dots x_{r}=u}}\psi_\mathbb{F}(x_1+\cdots+ x_{r}),
\end{equation}
where $\psi_\mathbb{F}$ denotes the additive character given by
$$
\psi_\mathbb{F}(x)=e\left(\frac{\mathrm{Tr}_{\mathbb{F}/\mathbb{F}_p}(x)}{p}\right).
$$

\begin{proposition}[Vorono\u{\i} summation formula for
  $GL(N)$]\label{propo_formule_voronoi}
  Let $N\geq 2$ be an integer and $f$ a Hecke-Maass cusp form on
  $GL(N)$ of level $1$.  Let $w:\R_+^\ast\to\R$ be a smooth and
  compactly supported function. Let $p$ be a prime number
  and let $b$ be an integer. If $p$ does not divide $b$ then
\begin{multline}\label{eq_Voronoi}
\sum_{n\geq 1}\af(n)e\left(\frac{bn}{p}\right)w(n)=\frac{\epsilon_f}{p^{\frac{N}{2}}}\sum_{m\in\Z^\ast}\afs(m)K_{N-1}(\bar{b}m,p)\mathcal{B}_{\alpha_\infty(f)}[w]\left(\frac{m}{p^N}\right) \\
+\epsilon_f\sum_{\ell=1}^{N-2}\frac{(-1)^{\ell+1}}{p^\ell}\sum_{m\in\Z^\ast}a_f(\overbrace{1,\dots,1}^{\text{$\ell-1$}},p,1,\dots,1,m)\mathcal{B}_{\alpha_\infty(f)}[w]\left(\frac{m}{p^{\ell}}\right)
\end{multline}
where $\bar{b}$ denotes the inverse of $b$ modulo $p$. The second sum
is zero if $N=2$.
\end{proposition}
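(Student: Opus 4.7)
The plan is to derive the formula as a direct specialization, at level $1$ and prime denominator $p$, of the general $GL(N)$ Vorono\u{\i} summation formula of Goldfeld--Li~\cite{MR2233713} (equivalently of Ichino--Templier~\cite{IcTe} or Miller--Schmid~\cite{MR2882444}), after reorganizing the archimedean integral transforms into the compact form $\mathcal{B}_{\uple{\alpha}}$ of~\eqref{eq_CL}--\eqref{eq_CL_2} and applying the parity/duality relations~\eqref{eq_even}, \eqref{eq_Fourier_coeffs}, and~\eqref{eq_unitaricity_2}.

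The first step is to start from the Goldfeld--Li identity, whose derivation proceeds by Mellin-inverting $w$ against the Dirichlet series $L_b(s):=\sum_{n\geq 1}\af(n)e(bn/p)n^{-s}$, and then invoking the twisted functional equation of $L(f,s)$; on the dual side the additive twist produces Gauss sums which, upon unfolding and applying the multiplicativity relations~\eqref{eq_multiplicative}--\eqref{eq_multiplicative_bis} to the Hecke eigenvalues modulo $p$, reconstitute hyper-Kloosterman sums in the sense of~\eqref{eq_Kloos_def}. For a prime denominator with $\gcd(b,p)=1$, the resulting dual sum stratifies according to how many of the inner dissection indices are divisible by $p$: the generic stratum yields a Fourier coefficient of the form $a_f(m,1,\ldots,1,p)$, which by~\eqref{eq_Fourier_coeffs} equals $\afs(m)$ after conjugation, together with the factor $K_{N-1}(\bar b m,p)/p^{N/2}$ and the archimedean kernel evaluated at $m/p^N$; this is the main term. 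The remaining $N-2$ non-generic strata produce the secondary terms indexed by $1\leq\ell\leq N-2$, with Fourier coefficient $a_f(1,\ldots,1,p,1,\ldots,1,m)$ (with the $p$ in position $\ell$) and no Kloosterman weight, in agreement with the fact that they vanish when $N=2$.

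Next comes the identification of the integral transform. The general Vorono\u{\i} formula supplies two archimedean kernels, one for each $k\in\{0,1\}$, of the shape appearing in~\eqref{eq_Mellin_psi} and corresponding to the decomposition of a test function into its even and odd parts under $n\mapsto -n$. Because of the parity relation~\eqref{eq_even} with sign $\epsilon_f$ given by~\eqref{eq_parity}, the two contributions collapse into the single transform $\mathcal{B}_{\alpha_\infty(f)}[w](m/p^N)=\mathcal{B}^{\sgn(m)}_{\alpha_\infty(f)}[w](|m|/p^N)$ defined in~\eqref{eq_CL_2}, and it is precisely the parity of $f$ that factors out as the overall $\epsilon_f$ on the right-hand side. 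The duality~\eqref{eq_unitaricity_2} identifies $\alpha_\infty(f)^{\ast}$ with $\alpha_\infty(f^{\ast})$, so the gamma ratio $\fGamma_{k,\alpha_\infty(f)}(s)/\fGamma_{k,\alpha_\infty(f)^{\ast}}(1-s)$ is exactly the dual functional-equation kernel for $f^{\ast}$, consistent with having $\afs(m)$ on the dual side.

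The main obstacle I anticipate is bookkeeping of normalizations: reconciling the Mellin convention fixed in the Notations with the one used in~\cite{MR2233713} and~\cite{IcTe}, distributing the $\fGamma_\R(s)=\pi^{-s/2}\fGamma(s/2)$-factors correctly among the $N$ archimedean gamma factors, extracting the right power $p^{N/2}$ (half of the archimedean $GL(N)$-conductor $p^N$) in front of the main term, and tracking the signs $(-1)^{\ell+1}$ in the secondary terms. Once these constants and signs are verified against the source formulas, no further analytic or arithmetic input is needed, and the identity reduces verbatim to the statement of the proposition.
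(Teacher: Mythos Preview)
Your approach is essentially the same as the paper's: both derive the formula by specializing the general $GL(N)$ Vorono\u{\i} summation of Goldfeld--Li~\cite{MR2233713} and Ichino--Templier~\cite{IcTe} to level $1$ and prime denominator, with the real work being the bookkeeping of archimedean factors, epsilon constants, and signs. The paper organizes this by treating $N$ odd (a direct citation of~\cite[Theorem~4.1]{MR2233713}) and $N$ even (where it computes the local $\gamma$-factors from~\cite{IcTe} explicitly, obtaining $\epsilon(s,\pi_\infty(f^\ast),\psi_\infty)=\epsilon_f$ and $\epsilon(s,\pi_\infty(f^\ast)\times\mathrm{sgn},\psi_\infty)=\epsilon_f i^N$, and then matches with the definition~\eqref{eq_CL}); your sketch does not make this parity split explicit but is headed to the same place.

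One small slip to correct: the main (generic) stratum on the dual side carries the Fourier coefficient $\afs(m)=a_f(1,\ldots,1,m)$, \emph{not} $a_f(m,1,\ldots,1,p)$ as you wrote; there is no $p$ in that coefficient, and the identification with $\afs$ comes from~\eqref{eq-aff} and~\eqref{eq_Fourier_dual} rather than from conjugation via~\eqref{eq_Fourier_coeffs}. This does not affect your overall plan.
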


\begin{proof}[\proofname{} of proposition \ref{propo_formule_voronoi}]
  When $N$ is odd, \eqref{eq_Voronoi} can be deduced directly from
  \cite[Theorem 4.1]{MR2233713}. Let us assume then that $N$ is even
  and let us check that \eqref{eq_Voronoi} can be deduced from
  \cite[Theorem 2]{IcTe}. The explicit links between their notations
  and ours are given in \cite[Remark 3]{IcTe}. Let $\pi(f)$ be the
  automorphic cusp form of $GL(N,\mathbb{A}_\Q)$ associated to $f$ and
  let $\pi_{\infty}(f)$ be its archimedean component. For
  $\chi\in\left\{1,\text{sgn}\right\}$ one of the two unitary
  characters of $\R^\ast$, the duality between $w$ and
  $\mathcal{B}_{\alpha_\infty(f)}[w]$ is given by
\begin{multline*}
\int_{y=0}^{+\infty}\left(\mathcal{B}_{\alpha_\infty(f)}[w](y)+\mathcal{B}_{\alpha_\infty(f)}[w](-y)\chi(-y)\right)y^s\frac{\dd s}{y} \\
=\chi(-1)^{N-1}\gamma\left(1-s,\pi_\infty(f^\ast)\times\chi,\psi_\infty\right)\int_{y=0}^{+\infty}w(y)\chi(y)y^{1-s}\frac{\dd s}{y}
\end{multline*}
according to \cite[Lemma (5.2)]{IcTe} for all $s$ of real part
sufficiently large, where
\begin{equation*}
\gamma\left(1-s,\pi_\infty(f^\ast)\times\chi,\psi_\infty\right)=\epsilon\left(s,\pi_\infty(f^\ast)\times\chi,\psi_\infty\right)\frac{L\left(s,\pi_\infty(f)\times\chi\right)}{L\left(1-s,\pi_\infty(f^\ast)\times\chi\right)}
\end{equation*}
by \cite[Section 5.1]{IcTe}. Consequently,
\begin{multline*}
\mathcal{M}\left[y\mapsto\mathcal{B}_{\alpha_\infty(f)}[w](\epsilon y)\right](s)=\mathcal{M}[w](1-s) \\
\times\frac{1}{2}\left(\gamma\left(1-s,\pi_\infty(f^\ast),\psi_\infty\right)+\epsilon\times(-1)^{N-1}\gamma\left(1-s,\pi_\infty(f^\ast)\times\text{sgn},\psi_\infty\right)\right)
\end{multline*}
for $\epsilon=\pm 1$. 
We have
\begin{eqnarray*}
\epsilon\left(s,\pi_\infty(f^\ast),\psi_\infty\right) & = & \epsilon_f, \\
\epsilon\left(s,\pi_\infty(f^\ast)\times\text{sgn},\psi_\infty\right) & = & \epsilon_fi^N
\end{eqnarray*}
and
\begin{eqnarray*}
L\left(s,\pi_\infty(f)\right) & = & \prod_{j=1}^N\fGamma_\R\left(s+\alpha_{j,\infty}(f)\right), \\
L\left(s,\pi_\infty(f)\times\text{sgn}\right) & = & \prod_{j=1}^N\fGamma_\R\left(s+\alpha_{j,\infty}(f)+1\right).
\end{eqnarray*}
Noting that
\begin{equation*}
(-1)^{N-1}i^N=-\frac{1}{i^N},
\end{equation*}
the formula follows as stated.
\end{proof}

The following useful lemma relates the Bessel transforms for $f$ and
its dual.

\begin{lemma}\label{lemma_easy_Bessel}
Let $k\in\{0,1\}$ and $w:\R_+^\ast\to\R$ a smooth and compactly supported function. One has
\begin{equation*}
  \overline{\mathcal{B}_{k,\alpha_\infty(f)}[w]}=\mathcal{B}_{k,\alpha_\infty(f^\ast)}[w]
\end{equation*}
and
\begin{equation*}
  \overline{\mathcal{B}_{\alpha_\infty(f)}^{\pm}[w]}=\mathcal{B}_{\alpha_\infty(f^\ast)}^{\pm(-1)^N}[w] \;\;\;\text{ and } \;\;\;\overline{\mathcal{B}_{\alpha_\infty(f)}[w](x)}=\mathcal{B}_{\alpha_\infty(f^\ast)}[w]((-1)^Nx)
\end{equation*}
for all non-zero real number $x$.
\end{lemma}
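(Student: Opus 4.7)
The proof is a direct manipulation at the level of the Mellin definition \eqref{eq_Mellin_psi}, and the three assertions will follow in sequence from each other. The plan is to establish the first identity by complex conjugation under the integral, then deduce the second by unwinding \eqref{eq_CL}, and finally deduce the third from \eqref{eq_CL_2}.

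First, I would write
\begin{equation*}
\mathcal{B}_{k,\uple{\alpha}}[w](x)=\frac{1}{2\pi}\int_{-\infty}^{+\infty}F(\sigma+it)\,\dd t,\qquad F(s)=\frac{\fGamma_{k,\uple{\alpha}}(s)}{\fGamma_{k,\uple{\alpha}^{\ast}}(1-s)}\mathcal{M}[w](1-s)x^{-s},
\end{equation*}
for $x>0$, and take the complex conjugate of both sides. Substituting $t\mapsto -t$ in the resulting integral, the conjugate of $\mathcal{B}_{k,\uple{\alpha}}[w](x)$ is the contour integral on $\Re(s)=\sigma$ of $\overline{F(\bar s)}$. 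Now three facts need to be invoked: $\overline{\fGamma_{\R}(\bar s)}=\fGamma_{\R}(s)$ (since $\fGamma_\R$ is real on the real axis), so that $\overline{\fGamma_{k,\uple{\alpha}}(\bar s)}=\fGamma_{k,\uple{\alpha}^{\ast}}(s)$ and analogously for the denominator; the reality of $w$, which yields $\overline{\mathcal{M}[w](1-\bar s)}=\mathcal{M}[w](1-s)$; and $\overline{x^{-\bar s}}=x^{-s}$ since $x>0$. Putting these together, $\overline{F(\bar s)}$ equals the integrand defining $\mathcal{B}_{k,\uple{\alpha}^{\ast}}[w](x)$ (using $(\uple{\alpha}^{\ast})^{\ast}=\uple{\alpha}$). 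Specialising to $\uple{\alpha}=\alpha_\infty(f)$ and invoking the unitarity property \eqref{eq_unitaricity_2}, which gives $\alpha_\infty(f)^{\ast}=\overline{\alpha_\infty(f)}=\alpha_\infty(f^{\ast})$, yields the first identity.

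For the second identity, I would plug the first into \eqref{eq_CL}, while being careful with the conjugation of the factor $1/i^N$. One has $\overline{i^N}=(-i)^N=(-1)^Ni^N$, hence $\overline{1/i^N}=(-1)^N/i^N$, so that
\begin{equation*}
\overline{\mathcal{B}_{\alpha_\infty(f)}^{\pm}[w]}=\frac{1}{2}\Bigl(\mathcal{B}_{0,\alpha_\infty(f^{\ast})}[w]\mp\frac{(-1)^N}{i^N}\mathcal{B}_{1,\alpha_\infty(f^{\ast})}[w]\Bigr),
\end{equation*}
which, by the very definition \eqref{eq_CL} applied with sign $\pm(-1)^N$, is precisely $\mathcal{B}_{\alpha_\infty(f^{\ast})}^{\pm(-1)^N}[w]$.

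Finally, the third identity is obtained from \eqref{eq_CL_2} by conjugating
\begin{equation*}
\mathcal{B}_{\alpha_\infty(f)}[w](x)=\mathcal{B}_{\alpha_\infty(f)}^{\sgn(x)}[w](\abs{x})
\end{equation*}
and applying the second identity: the sign becomes $\sgn(x)\cdot(-1)^N=\sgn((-1)^N x)$, while $\abs{x}=\abs{(-1)^N x}$, so the right-hand side equals $\mathcal{B}_{\alpha_\infty(f^{\ast})}[w]((-1)^N x)$. There is no real obstacle here; the only place that requires care is the bookkeeping of the conjugation of $i^N$ and the way it combines with the choice of sign in \eqref{eq_CL}, which is the step that actually produces the factor $(-1)^N$ distinguishing the behaviour between even and odd $N$.
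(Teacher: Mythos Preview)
Your proof is correct and follows essentially the same approach as the paper: conjugate under the Mellin integral, use the reality of $\fGamma_\R$ on the real axis and of $w$, and invoke \eqref{eq_unitaricity_2} to identify $\alpha_\infty(f)^{\ast}$ with $\alpha_\infty(f^{\ast})$. The paper merely states that the second and third identities follow from the first via \eqref{eq_CL} and \eqref{eq_CL_2}, whereas you spell out the conjugation of $1/i^N$ explicitly; this is the same argument with more detail shown.
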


\begin{proof}[\proofname{} of lemma \ref{lemma_easy_Bessel}]
  The second and third equalities are direct consequences of the first
  one by \eqref{eq_CL} and \eqref{eq_CL_2}. Let us quickly check the
  first one. Denote $\uple{\alpha}=\alpha_{\infty}(f)$ so that
  $\alpha_\infty(f^{\ast})=\uple{\alpha}^{\ast}$
  by~(\ref{eq_unitaricity_2}). By \eqref{eq_Mellin_psi}, we have
\begin{align*}
\overline{\mathcal{B}_{k,\uple{\alpha}}[w](x)} & =\frac{1}{2i\pi}\int_{(\sigma)}\frac{\overline{\fGamma_{k,\uple{\alpha}}(s)}}{\overline{\fGamma_{k,\uple{\alpha}^{\ast}}(1-s)}}\overline{\mathcal{M}[w](1-s)}\frac{\dd s}{x^{\overline{s}}} \\
& =\frac{1}{2i\pi}\int_{(\sigma)}\frac{\fGamma_{k,\overline{\uple{\alpha}}}(\overline{s})}{\overline{\fGamma_{k,\overline{\uple{\alpha}^{\ast}}}(1-\overline{s})}}\mathcal{M}[w](1-\overline{s})\frac{\dd s}{x^{\overline{s}}} \\
&
=\frac{1}{2i\pi}\int_{(\sigma)}\frac{\fGamma_{k,\uple{\alpha}^{\ast}}(\overline{s})}{\overline{\fGamma_{k,\uple{\alpha}}(1-\overline{s})}}\mathcal{M}[w](1-\overline{s})\frac{\dd
  s}{x^{\overline{s}}} \\
  & =\mathcal{B}_{k,\alpha_\infty(f^\ast)}[w]
\end{align*}
by \eqref{eq_unitaricity_2}.
\end{proof}
\subsection{Unitarity of the generalized Bessel transforms}%

A key ingredient in the computation of the variance in Sections
\ref{sec_variance_1}, \ref{sec_variance_2}, \ref{sec_variance_3} and
\ref{sec_variance_4} below will be the unitarity of the generalized
Bessel transforms in the following sense.

\begin{proposition}[Unitarity of the generalized Bessel transforms]\label{prop_unitary_Bessel}
  If $w:\R_+^\ast\to\R$ is a smooth and compactly supported
  function and $k\in\{0,1\}$ then
\begin{equation}\label{eq_unitary_Bessel}
  \abs{\abs{\mathcal{B}_{k,\alpha_\infty(f)}[w]}}_2=\abs{\abs{w}}_2
\end{equation}
where the $L^2$-norms are computed with respect to the Lebesgue measure $\dd x$ on $\R_+^\ast$.
\end{proposition}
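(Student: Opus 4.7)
\medskip

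\noindent\textbf{Proof proposal.} The plan is to reduce the identity \eqref{eq_unitary_Bessel} to Plancherel for the Mellin transform on the critical line $\Re(s)=1/2$, after checking that on this line the gamma-factor ratio defining $\mathcal{B}_{k,\uple{\alpha}}[w]$ has modulus one. Throughout, write $\uple{\alpha}=\alpha_\infty(f)$, so that $\uple{\alpha}^{\ast}=\overline{\uple{\alpha}}$ by \eqref{eq_unitaricity_2}. Recall the standard Parseval formula for the Mellin transform: for $f$ smooth of compact support on $\R_+^{\ast}$,
\begin{equation*}
\int_0^{+\infty}\lvert f(x)\rvert^2\,\dd x=\frac{1}{2\pi}\int_{-\infty}^{+\infty}\bigl\lvert\mathcal{M}[f]\bigl(\tfrac12+it\bigr)\bigr\rvert^2\,\dd t,
\end{equation*}
as follows from Fourier--Plancherel after the substitution $u=\log x$.

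First I would verify that, in the contour integral defining $\mathcal{B}_{k,\uple{\alpha}}[w]$ in \eqref{eq_Mellin_psi}, one is allowed to choose $\sigma=1/2$. The admissibility condition is $\sigma>\max_j(-\Re\alpha_{j,\infty}(f))$, and by the Luo--Rudnick--Sarnak bound \eqref{eq_Selberg} we have $\lvert\Re\alpha_{j,\infty}(f)\rvert\leq 1/2-1/(N^2+1)<1/2$, so $\sigma=1/2$ is admissible. The integrand has no poles on this line: the poles of $\fGamma_{k,\uple{\alpha}}(s)$ all lie at $\Re(s)=-\Re\alpha_{j,\infty}(f)-k-2m<1/2$, the factor $\fGamma_{k,\uple{\alpha}^{\ast}}(1-s)^{-1}$ is entire since $\Gamma$ has no zeros, and $\mathcal{M}[w](1-s)$ is entire with rapid decay on vertical strips since $w$ is smooth of compact support. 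Hence
\begin{equation*}
\mathcal{M}[\mathcal{B}_{k,\uple{\alpha}}[w]](s)=\frac{\fGamma_{k,\uple{\alpha}}(s)}{\fGamma_{k,\uple{\alpha}^{\ast}}(1-s)}\mathcal{M}[w](1-s)
\end{equation*}
for $\Re(s)=1/2$.

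The heart of the argument is the identity
\begin{equation*}
\left\lvert\frac{\fGamma_{k,\uple{\alpha}}(1/2+it)}{\fGamma_{k,\uple{\alpha}^{\ast}}(1/2-it)}\right\rvert=1\qquad(t\in\R).
\end{equation*}
Since $\fGamma_\R(s)=\pi^{-s/2}\Gamma(s/2)$ is real on the real axis, Schwarz reflection gives $\overline{\fGamma_\R(s)}=\fGamma_\R(\bar s)$. Applying this factor by factor, and using $\uple{\alpha}^{\ast}=\overline{\uple{\alpha}}$, we get $\overline{\fGamma_{k,\uple{\alpha}}(s)}=\fGamma_{k,\uple{\alpha}^{\ast}}(\bar s)$. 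At $s=1/2+it$ we have $\bar s=1-s$, so $\fGamma_{k,\uple{\alpha}^{\ast}}(1-s)=\overline{\fGamma_{k,\uple{\alpha}}(s)}$ and the ratio has modulus one.

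Combining, Parseval yields
\begin{equation*}
\lVert\mathcal{B}_{k,\uple{\alpha}}[w]\rVert_2^2=\frac{1}{2\pi}\int_{-\infty}^{+\infty}\left\lvert\frac{\fGamma_{k,\uple{\alpha}}(1/2+it)}{\fGamma_{k,\uple{\alpha}^{\ast}}(1/2-it)}\right\rvert^2\lvert\mathcal{M}[w](1/2-it)\rvert^2\,\dd t=\frac{1}{2\pi}\int_{-\infty}^{+\infty}\lvert\mathcal{M}[w](1/2-it)\rvert^2\,\dd t,
\end{equation*}
and the change of variable $t\mapsto -t$ together with Parseval applied to $w$ itself gives $\lVert\mathcal{B}_{k,\uple{\alpha}}[w]\rVert_2=\lVert w\rVert_2$. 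The only non-routine ingredient is the placement of the contour at $\sigma=1/2$, which really relies on the non-trivial bound \eqref{eq_Selberg}; everything else is bookkeeping with complex conjugation.
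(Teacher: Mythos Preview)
Your proof is correct and follows essentially the same route as the paper's: both reduce the identity to Parseval/Plancherel for the Mellin transform together with the reflection identity $\overline{\fGamma_{k,\uple{\alpha}}(s)}=\fGamma_{k,\uple{\alpha}^{\ast}}(\bar s)$ coming from $\uple{\alpha}^{\ast}=\overline{\uple{\alpha}}$. The only difference is presentational. The paper first rewrites $\overline{\mathcal{B}_{k,\uple{\alpha}}[w]}=\mathcal{B}_{k,\uple{\alpha}^{\ast}}[w]$ (this is Lemma~\ref{lemma_easy_Bessel}) and then applies the bilinear Parseval formula to the pair $(\mathcal{B}_{k,\uple{\alpha}}[w],\mathcal{B}_{k,\uple{\alpha}^{\ast}}[w])$; the two gamma ratios then cancel \emph{exactly}, leaving $\tfrac{1}{2i\pi}\int_{(\sigma)}\mathcal{M}[w](s)\mathcal{M}[w](1-s)\,\dd s=\lVert w\rVert_2^2$. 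You instead sit on the critical line and use that the single ratio has modulus one there, which is of course the same identity specialized to $\bar s=1-s$. A minor advantage of the paper's exact-cancellation phrasing is that once the gamma factors drop out the remaining integral involves only $\mathcal{M}[w]$, which is entire with rapid decay, so one does not need to insist on $\sigma=1/2$; in particular the appeal to \eqref{eq_Selberg} can be softened (any bound $\lvert\Re\alpha_{j,\infty}(f)\rvert<1/2$ already suffices to make the intermediate Mellin transforms converge on a common strip).
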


\begin{proof}[\proofname{} of proposition \ref{prop_unitary_Bessel}]
Denote $\uple{\alpha}=\alpha_{\infty}(f)$. 
One gets successively
\begin{align*}
\abs{\abs{\mathcal{B}_{k,\uple{\alpha}}[w]}}_2^2 & =\int_{x=0}^{+\infty}\abs{\mathcal{B}_{k,\uple{\alpha}}[w](x)}^2\dd x \\
& =\int_{x=0}^{+\infty}\mathcal{B}_{k,\uple{\alpha}}[w](x)\overline{\mathcal{B}_{k,\uple{\alpha}}[w](x)}\dd x \\
& =\int_{x=0}^{+\infty}\mathcal{B}_{k,\uple{\alpha}}[w](x)\mathcal{B}_{k,\uple{\alpha}^{\ast}}[w](x)\dd x
\end{align*}
by Lemma \ref{lemma_easy_Bessel}. Then, the Parseval formula for the Mellin transform (namely the fact that the (suitably renormalized version of the) Mellin transform is a unitary operator) asserts that
\begin{equation*}
\abs{\abs{\mathcal{B}_{k,\uple{\alpha}}[w]}}_2^2=\frac{1}{2i\pi}\int_{(\sigma)}\mathcal{M}\left[\mathcal{B}_{k,\uple{\alpha}}[w]\right](s)\mathcal{M}\left[\mathcal{B}_{k,\uple{\alpha}^{\ast}}[w]\right](1-s)\dd s
\end{equation*}
for $\sigma$ large enough (see \cite[Theorem 1.17]{MR1391243}). By \eqref{eq_Mellin_psi_2},
\begin{align}
\abs{\abs{\mathcal{B}_{k,\uple{\alpha}}[w]}}_2^2 & =\frac{1}{2i\pi}\int_{(\sigma)}\frac{\fGamma_{k,\uple{\alpha}}(s)}{\fGamma_{k,\uple{\alpha}^{\ast}}(1-s)}\mathcal{M}[w](1-s) \\
& \times\frac{\fGamma_{k,\uple{\alpha}^{\ast}}(1-s)}{\fGamma_{k,\uple{\alpha}}(1-(1-s))}\mathcal{M}[w](1-(1-s))\dd s \\
& =\frac{1}{2i\pi}\int_{(\sigma)}\mathcal{M}[w](1-s)\mathcal{M}[w](s)\dd s \label{eq_func_local} \\
& =\abs{\abs{w}}_2^2
\end{align}
once again by the Parseval formula for the Mellin transform.
\end{proof}


\begin{corollary}\label{coro_unitary_Bessel}
Let $w:\R_+^\ast\to\R$ be a smooth and compactly supported function.
\begin{itemize}
\item
If $N$ is odd then
\begin{equation}\label{eq_mellin_odd}
\sum_{g\in\{f,f^\ast\}}\mathcal{M}\left[\left\vert\mathcal{B}_{\alpha_\infty(g)}[w]\right\vert^2\right](1)=\abs{\abs{w}}_2^2.
\end{equation}
\item
Independently of the parity of $N$,
\begin{equation}\label{eq_mellin_even}
\sum_{\epsilon\in\{\pm 1\}}\mathcal{M}\left[\left\vert\mathcal{B}_{\alpha_\infty(f)}^\epsilon[w]\right\vert^2\right](1)=\abs{\abs{w}}_2^2.
\end{equation}
\end{itemize}
\end{corollary}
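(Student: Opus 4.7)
The plan is to reduce both identities to the unitarity statement \eqref{eq_unitary_Bessel} of Proposition \ref{prop_unitary_Bessel}. The starting observation is that the Mellin transform at $s=1$ is simply the Lebesgue integral on $\R_+^\ast$: for any suitable function $\psi$ on $\R_+^\ast$,
\begin{equation*}
\mathcal{M}[\abs{\psi}^2](1)=\int_{x=0}^{+\infty}\abs{\psi(x)}^2\dd x=\abs{\abs{\psi}}_2^2.
\end{equation*}
Applied with $\psi=\mathcal{B}_{\alpha_\infty(g)}[w]$, together with the definition \eqref{eq_CL_2} which gives $\mathcal{B}_{\alpha_\infty(g)}[w](x)=\mathcal{B}_{\alpha_\infty(g)}^+[w](x)$ for $x>0$, this reinterprets both \eqref{eq_mellin_odd} and \eqref{eq_mellin_even} as identities between $L^2$-norms of the transforms $\mathcal{B}_{\uple{\alpha}}^{\pm}[w]$ on $\R_+^\ast$.

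I would first establish \eqref{eq_mellin_even}. Writing the definition \eqref{eq_CL} as $\mathcal{B}_{\uple{\alpha}}^{\pm}[w]=A\mp B$ with
\begin{equation*}
A\coloneqq\tfrac{1}{2}\mathcal{B}_{0,\uple{\alpha}}[w],\qquad B\coloneqq\tfrac{1}{2i^N}\mathcal{B}_{1,\uple{\alpha}}[w],
\end{equation*}
the pointwise parallelogram identity $\abs{A-B}^2+\abs{A+B}^2=2\abs{A}^2+2\abs{B}^2$, integrated against $\dd x$ on $\R_+^\ast$ and using $\abs{i^N}=1$, yields
\begin{equation*}
\abs{\abs{\mathcal{B}_{\uple{\alpha}}^{+}[w]}}_2^2+\abs{\abs{\mathcal{B}_{\uple{\alpha}}^{-}[w]}}_2^2=\tfrac{1}{2}\abs{\abs{\mathcal{B}_{0,\uple{\alpha}}[w]}}_2^2+\tfrac{1}{2}\abs{\abs{\mathcal{B}_{1,\uple{\alpha}}[w]}}_2^2.
\end{equation*}
Specializing to $\uple{\alpha}=\alpha_\infty(f)$ and applying Proposition \ref{prop_unitary_Bessel} with $k=0$ and $k=1$ produces exactly $\abs{\abs{w}}_2^2$.

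For \eqref{eq_mellin_odd}, I would invoke Lemma \ref{lemma_easy_Bessel}: when $N$ is odd, $(-1)^N=-1$, so the lemma (applied with $f$ replaced by $f^\ast$, using $(f^\ast)^\ast=f$) gives the pointwise identity $\mathcal{B}_{\alpha_\infty(f^\ast)}^{+}[w]=\overline{\mathcal{B}_{\alpha_\infty(f)}^{-}[w]}$ on $\R_+^\ast$. Taking the square modulus and integrating yields $\abs{\abs{\mathcal{B}_{\alpha_\infty(f^\ast)}^{+}[w]}}_2^2=\abs{\abs{\mathcal{B}_{\alpha_\infty(f)}^{-}[w]}}_2^2$, so the left side of \eqref{eq_mellin_odd} becomes
\begin{equation*}
\abs{\abs{\mathcal{B}_{\alpha_\infty(f)}^{+}[w]}}_2^2+\abs{\abs{\mathcal{B}_{\alpha_\infty(f)}^{-}[w]}}_2^2,
\end{equation*}
which equals $\abs{\abs{w}}_2^2$ by the case just proved. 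No real obstacle arises: both identities are clean consequences of the unitarity of each $\mathcal{B}_{k,\uple{\alpha}}$ together with the conjugation formula of Lemma \ref{lemma_easy_Bessel}, with the parity hypothesis in \eqref{eq_mellin_odd} used only to convert a $+$ into a $-$ via complex conjugation.
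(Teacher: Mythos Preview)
Your proof is correct and follows essentially the same approach as the paper: both arguments reduce to Proposition~\ref{prop_unitary_Bessel} via Lemma~\ref{lemma_easy_Bessel}, expanding $\mathcal{B}^{\pm}_{\uple{\alpha}}[w]=\tfrac{1}{2}(\mathcal{B}_{0,\uple{\alpha}}[w]\mp i^{-N}\mathcal{B}_{1,\uple{\alpha}}[w])$ and letting the cross terms cancel upon summation. Your packaging via the parallelogram identity is slightly cleaner than the paper's explicit expansion, and you organize the argument by proving \eqref{eq_mellin_even} first and then deducing \eqref{eq_mellin_odd} from it, whereas the paper derives both from a single computed formula for $\mathcal{M}[|\mathcal{B}^{\epsilon}_{\alpha_\infty(g)}[w]|^2](1)$; but the substance is the same.
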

\begin{proof}[\proofname{} of corollary \ref{coro_unitary_Bessel}]%
By Lemma \ref{lemma_easy_Bessel}, we have
\begin{equation*}
\mathcal{M}\left[\left\vert\mathcal{B}_{\alpha_\infty(g)}[w]\right\vert^2\right](1)=\mathcal{M}\left[\left\vert\mathcal{B}_{\alpha_\infty(g)}^{+1}[w]\right\vert^2\right](1)
\end{equation*}
for $g=f, f^\ast$ and
\begin{equation*}
\mathcal{M}\left[\left\vert\mathcal{B}_{\alpha_\infty(g)}^\epsilon[w]\right\vert^2\right](1)=\int_{x=0}^{+\infty}\mathcal{B}_{\alpha_\infty(g)}^\epsilon[w](x)\mathcal{B}_{\alpha_\infty(g^\ast)}^{(-1)^N\epsilon}[w](x)\dd x
\end{equation*}
for $g=f, f^\ast$ and $\epsilon=\pm 1$. A straightforward computation reveals that
\begin{multline*}
\mathcal{M}\left[\left\vert\mathcal{B}_{\alpha_\infty(g)}^\epsilon[w]\right\vert^2\right](1)=\frac{1}{4}\left(\abs{\abs{\mathcal{B}_{0,\alpha_\infty(g)}[w]}}_2^2+\abs{\abs{\mathcal{B}_{1,\alpha_\infty(g)}[w]}}_2^2\right) \\
-\frac{\epsilon}{4i^N}\mathcal{M}\left[\mathcal{B}_{0,\alpha_\infty(g^\ast)}[w]\mathcal{B}_{1,\alpha_\infty(g)}[w]+(-1)^N\mathcal{B}_{0,\alpha_\infty(g)}[w]\mathcal{B}_{1,\alpha_\infty(g^\ast)}[w]\right](1).
\end{multline*}
Proposition \ref{prop_unitary_Bessel} implies both \eqref{eq_mellin_odd}, if $N$ is odd, and \eqref{eq_mellin_even}.
\end{proof}
\subsection{Asymptotic behaviour of the generalized Bessel transforms}%
Bounds for the generalized Bessel transforms $\mathcal{B}_{\alpha_\infty(f)}^{\pm}[w]$ both for small and large arguments are required in this work.
\begin{proposition}\label{propo_bounds}
Let $w:\R_+^\ast\to\R$ be a smooth and compactly supported
function, $x$ be a positive real number and $K$ be a positive
integer. Let $\uple{\alpha}=\alpha_{\infty}(f)$ for some cusp form $f$
as before. 
\begin{itemize}
\item 
If $\;0<x\leq 1$ then
\begin{equation*}
\mathcal{B}_{\uple{\alpha}}^{\pm}[w](x)\ll\max_{1\leq j\leq
  N}x^{\Re{(\alpha_{j,\infty}(f))}}.
\end{equation*}
In particular, if $\;0<x\leq 1$ then
\begin{equation*}
\mathcal{B}_{\uple{\alpha}}^{\pm}[w](x)\ll x^{-\left(1/2-1/(N^2+1)\right)}
\end{equation*}
by~\emph{(\ref{eq_Selberg})}.
\item
If $x>0$ then
\begin{equation*}
\mathcal{B}_{\uple{\alpha}}^{\pm}[w](x)\ll_{A, \uple{\alpha}, w}\frac{1}{x^{A}}
\end{equation*}
for all positive real number $A$.
\end{itemize}
\end{proposition}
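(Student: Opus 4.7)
\smallskip

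My plan is to treat both bullet points by contour shifting on the Mellin-Barnes integral representation~\eqref{eq_Mellin_psi} of $\mathcal{B}_{k,\uple{\alpha}}[w]$, and then read off the bounds for $\mathcal{B}_{\uple{\alpha}}^{\pm}[w]$ from~\eqref{eq_CL}. Since $w$ is smooth and compactly supported, $\mathcal{M}[w]$ is entire and, by repeated integration by parts, satisfies
\begin{equation*}
\mathcal{M}[w](1-s)\ll_{w,\sigma,B}(1+|t|)^{-B}
\end{equation*}
for every $B\geq 0$, uniformly in vertical strips $\Re(s)=\sigma$ in any fixed compact range. Combined with Stirling's formula, which gives
\begin{equation*}
\frac{\fGamma_{k,\uple{\alpha}}(\sigma+it)}{\fGamma_{k,\uple{\alpha}^{\ast}}(1-\sigma-it)}\ll_{\uple{\alpha},\sigma}(1+|t|)^{N(\sigma-1/2)}
\end{equation*}
as $|t|\to+\infty$, the integrand has rapid decay along any vertical line and no poles other than those coming from $\fGamma_{k,\uple{\alpha}}(s)=\prod_{j}\fGamma_\R(s+\alpha_j+k)$, which all lie in the left half-plane $\Re(s)\leq -\Re(\alpha_j)-k$.

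For the small-$x$ bound, I would shift the contour from $\sigma>\max_j(-\Re(\alpha_j))$ leftwards, through the rightmost poles of $\fGamma_{k,\uple{\alpha}}(s)$, to a new line $\sigma'=-\max_j\Re(\alpha_j)-\epsilon$ (generic enough to avoid collisions among the $\alpha_j$'s; if Langlands parameters coincide mod $2\Z$ one picks up higher-order residues which produce the same size estimate, decorated by logarithms of $x$ that are absorbed into a sharper power for any $\epsilon>0$). For $k=0$, the residue of $\fGamma_\R(s+\alpha_j)$ at $s=-\alpha_j$ produces a contribution of size $x^{\Re(\alpha_j)}$ (and, for $k=1$, the analogous residue at $s=-\alpha_j-1$ produces $x^{\Re(\alpha_j)+1}$, which is dominated by $x^{\Re(\alpha_j)}$ when $0<x\leq 1$). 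The remainder integral along $\Re(s)=\sigma'$ is $O(x^{-\sigma'})=O(x^{\max_j\Re(\alpha_j)+\epsilon})$, which is again majorised by $\max_j x^{\Re(\alpha_j)}$ for $0<x\leq 1$. Summing residues and using the linear combination~\eqref{eq_CL} yields the desired bound, and the ``in particular'' statement is an immediate consequence of the Luo-Rudnick-Sarnak bound~\eqref{eq_Selberg}.

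For the large-$x$ bound I do the opposite: I shift the contour to a line $\Re(s)=A$ for arbitrary $A>0$. Since every pole of $\fGamma_{k,\uple{\alpha}}(s)$ sits in the left half-plane and $\mathcal{M}[w](1-s)$ is entire, no residues arise, and on $\Re(s)=A$ we have
\begin{equation*}
\left|\mathcal{B}_{k,\uple{\alpha}}[w](x)\right|\leq x^{-A}\cdot\frac{1}{2\pi}\int_{\R}\left|\frac{\fGamma_{k,\uple{\alpha}}(A+it)}{\fGamma_{k,\uple{\alpha}^{\ast}}(1-A-it)}\right|\cdot\left|\mathcal{M}[w](1-A-it)\right|\dd t,
\end{equation*}
and the integral is finite (by the polynomial growth of the gamma ratio in $|t|$ against the rapid decay of $\mathcal{M}[w]$) with a value depending only on $A$, $\uple{\alpha}$, and $w$. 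Applying this for each $k\in\{0,1\}$ and combining through~\eqref{eq_CL} gives the second bound of the proposition.

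The one subtle point is that the implicit constant in the large-$x$ estimate may grow wildly with $A$ because the gamma factors blow up factorially in the $\Re(s)$-direction; however, the statement only claims a bound with a constant that is allowed to depend on $A$, $\uple{\alpha}$ and $w$, so this is harmless. The small-$x$ half is really the qualitatively delicate part, in particular the verification that residues of coincident gamma factors still fit inside the uniform estimate $\max_j x^{\Re(\alpha_j)}$ up to an arbitrarily small worsening of the exponent — this will be the main bookkeeping obstacle to handle carefully.
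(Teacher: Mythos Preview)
Your proof is correct and follows essentially the same approach as the paper: shift the contour in~\eqref{eq_Mellin_psi} to the left through the poles of $\fGamma_{k,\uple{\alpha}}(s)$ (the dominant ones at $s=-\alpha_j-k$ give the small-$x$ bound), and to the right to $\Re(s)=A$ without hitting singularities for the large-$x$ bound. You supply more justification for the contour shifts (Stirling, rapid decay of $\mathcal{M}[w]$) and address the possibility of coinciding Langlands parameters, but the underlying argument is identical.
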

\begin{proof}[\proofname{} of proposition \ref{propo_bounds}]%
  For the first part, we can shift the contour in
  \eqref{eq_Mellin_psi} to the left, passing through simple poles at
  $z=-2n-\alpha_{j}-k$ for all non-negative integers $n$ and $1\leq
  j\leq N$, the largest contribution occuring when $n=0$.
\par
For the second part, we can shift the contour to the right to
$\Re{(s)}=A$ without encountering any singularity.
\end{proof}

For the next corollary, we recall the definition~(\eqref{eq-af} and \eqref{eq-aff}) of
$\af(m)$ for all integers $m\geq 1$.

\begin{corollary}\label{coro_useful}
Let $Z$ be a positive real number, $M_1\geq 1$ be a real number, $1\leq M_1\leq M_2\leq +\infty$ and $w:\R_+^\ast\to\R$ be a smooth and compactly supported function. One has
\begin{multline*}
\sum_{\substack{m\in\Z^\ast \\
M_1\leq\abs{m}\leq M_2}}\left\vert \af(m)\right\vert\left\vert\mathcal{B}_{\alpha_\infty(f)}[w]\left(\frac{m}{Z}\right)\right\vert\ll_{\epsilon, f}\delta_{Z\leq M_1}M_1^{1+\epsilon}\left(\frac{Z}{M_1}\right)^{A}  \\
+\delta_{M_1\leq Z\leq M_2}Z^{1+\epsilon}+\delta_{M_2\leq Z}M_2^{1+\epsilon}\left(\frac{Z}{M_2}\right)^{1/2}
\end{multline*}
for all $\epsilon>0$ and all real number $A>1$.
\end{corollary}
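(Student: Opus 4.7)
The plan is to combine the pointwise bounds for the Bessel transform provided by Proposition~\ref{propo_bounds} with the standard partial-sum estimate
\begin{equation*}
\sum_{1\leq n\leq X}|\af(n)|\ll_{\epsilon,f}X^{1+\epsilon},
\end{equation*}
which follows from Cauchy--Schwarz and the Rankin--Selberg bound $\sum_{n\leq X}|\af(n)|^{2}\ll_{\epsilon,f}X^{1+\epsilon}$ (a consequence of the fact that $\sum_{n}|\af(n)|^{2}n^{-s}$ factors as the local factor of $L(f\times f^{\ast},s)$ times an absolutely convergent Euler product on $\Re(s)>1/2$, using~\eqref{eq-schur}). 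From Proposition~\ref{propo_bounds} I will extract the two shapes of bound: $|\mathcal{B}_{\alpha_\infty(f)}[w](y)|\ll y^{-1/2}$ for $0<y\leq 1$ (the crude exponent $1/2$ majorizes $1/2-1/(N^{2}+1)$ since $y\leq 1$), and $|\mathcal{B}_{\alpha_\infty(f)}[w](y)|\ll_{A}y^{-A}$ for any $A>0$.

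The argument then splits into three cases matching the three Kronecker deltas on the right-hand side, and in each case the estimate reduces to a dyadic decomposition of the range of $|m|$ together with the partial-sum bound applied on each dyadic block. If $Z\leq M_{1}$, every $m$ in the sum satisfies $|m|/Z\geq 1$, so the rapid-decay bound with a fixed $A>1$ applies; the geometric series over dyadic blocks $|m|\sim 2^{k}M_{1}$ with $k\geq 0$ has ratio $2^{1+\epsilon-A}<1$ and is dominated by its first term, producing $M_{1}^{1+\epsilon}(Z/M_{1})^{A}$. If $Z\geq M_{2}$, then $|m|/Z\leq 1$ throughout, so the small-argument bound applies; the series is dominated by the largest block $|m|\sim M_{2}$, yielding $M_{2}^{1+\epsilon}(Z/M_{2})^{1/2}$. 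In the intermediate regime $M_{1}\leq Z\leq M_{2}$, I would split the sum at $|m|=Z$, applying the small-argument bound on the lower half and the rapid-decay bound on the upper half; in both halves the dyadic series is dominated by the block $|m|\sim Z$, contributing $O(Z^{1+\epsilon})$ each.

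No deep obstacle appears here; the proof is essentially systematic dyadic bookkeeping. The one point to verify carefully is that choosing $A>1+\epsilon$ ensures convergence of the geometric series in the rapid-decay regime, and that in the small-argument regime the exponent balance $(1+\epsilon)-1/2>0$ correctly places the dominant contribution at the outer boundary of the sum. Summing the three contributions yields the claimed inequality, since the indicator functions $\delta_{Z\leq M_1}$, $\delta_{M_1\leq Z\leq M_2}$, $\delta_{M_2\leq Z}$ partition the possible positions of $Z$ relative to $[M_{1},M_{2}]$.
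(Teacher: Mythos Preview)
Your proposal is correct and follows essentially the same route as the paper: both combine the two regimes of Proposition~\ref{propo_bounds} with the Rankin--Selberg average bound~\eqref{eq_RP_average}, splitting the $m$-sum according to whether $|m|/Z$ is small or large. The only cosmetic difference is that the paper handles the tail by Abel summation and the near-zero range by a direct Cauchy--Schwarz, whereas you package both via dyadic decomposition; these are interchangeable standard devices and yield the same bounds.
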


\begin{proof}[\proofname{} of corollary \ref{coro_useful}]%
Let us assume that $Z\geq M_1$ and $M_2=+\infty$. Then, Proposition
\ref{propo_bounds} tells us that
for all positive real number $A$, the $m$-sum is bounded by
\begin{equation*}
Z^{1/2}\sum_{M_1\leq m\leq Z}\frac{\left\vert \af(m)
\right\vert}{m^{1/2}}+Z^{A}\sum_{m>Z}\frac{\left\vert \af(m)
\right\vert}{m^{A}}.
\end{equation*}
By the Cauchy-Schwarz inequality, the first term is bounded by
\begin{align*}
& \ll Z^{1/2}\left(\sum_{M_1\leq m\leq Z}\left\vert \af(m)\right\vert^2\right)^{1/2}\left(\sum_{M_1\leq m\leq Z}\frac{1}{m}\right)^{1/2} \\
& \ll_{\epsilon, f}Z^{1/2}\left(Z^{1+\epsilon}\right)^{1/2} \\
& =Z^{1+\epsilon}
\end{align*}
for all $\epsilon>0$ by \eqref{eq_RP_average}. By summation by parts, the second term equals
\begin{equation*}
Z^{A}\left[\frac{1}{x^{A}}\sum_{1\leq m\leq x}\left\vert \af(m)\right\vert\right]_{x=Z}^{+\infty}+AZ^{A}\int_{x=Z}^{+\infty}\frac{1}{x^{A+1}}\sum_{1\leq m\leq x}\left\vert \af(m)\right\vert\dd x.
\end{equation*}
Choosing $A>1$, the Cauchy-Schwarz inequality and
\eqref{eq_RP_average} ensure that this quantity is also
$\ll Z^{1+\epsilon}$.
\par
Let us assume that $Z<M_1$ and $M_2=+\infty$. Similarly, the $m$-sum is bounded by
\begin{equation*}
M^{1+\epsilon}\left(\frac{Z}{M_1}\right)^{A}
\end{equation*}
by summation by parts and \eqref{eq_RP_average}.
\par
The argument in the case where $M_2$ is a real number are essentially
the same.
\end{proof}

\section{Equidistribution of products of hyper-Kloosterman sums}%
\label{sec-katz}

This section contains the crucial algebraic ingredient involved in the
determination of the asymptotic behaviour of certain combinations of
hyper-Kloosterman sums which will arise in Section~\ref{ssec-reduce-katz}.
\par
Let $k\geq 1$ be a positive integer, let $\uple{m}=(m_1,\ldots,m_k)$,
$\uple{n}=(n_1,\ldots, n_k)$ be two tuples of non-negative integers,
and let $\uple{c}=(c_1,\ldots,c_k)\in\left(\mathbb{F}_p^{\ast}\right)^k$ be given.
\par
We define
\begin{equation}\label{eq-sum-s}
  S_{\uple{m};\uple{n}}(\uple{c};p)\coloneqq\frac{1}{p}
  \sum_{a\in\mathbb{F}_p^{\ast}}
  \prod_{j=1}^kK_N(ac_j,p)^{n_j}K_N(-ac_j,p)^{m_j},
\end{equation}
where we recall that $K_N(x,p)$ denotes the normalized hyper-Kloosterman
sum defined in \eqref{eq_Kloos_def}. We will determine the behavior of these sums as $p$ tends to
infinity.
\par
For $\mathbf{G}$ either the special linear group $\mathrm{SL}_N$ or
the symplectic group $\mathrm{Sp}_{N}$ (if $N$ is even), we denote by
$$
\mathrm{Std}\,:\, \mathbf{G}\rightarrow \mathrm{GL}_N
$$
the \emph{standard} $N$-dimensional representation of $\mathbf{G}$.  When
$\mathbf{G}=\mathrm{SL}_N$, we denote by $\overline{\mathrm{Std}}$ the
contragredient of the standard representation.

\begin{theorem}\label{theo_katz}
Let $p$ be an \emph{odd} prime number, $k\geq 1$,
$\uple{c}=(c_1,\ldots,c_k)\in\left(\mathbb{F}_p^{\ast}\right)^k$ and let
$\uple{m}=(m_1,\ldots,m_k)$ and $\uple{n}=(n_1,\ldots, n_k)$ be two
tuples of non-negative integers.
\begin{itemize}
\item
If $N$ is odd and if the parameters $c_j$'s are distinct in $\mathbb{F}_p^{\ast}/\{\pm 1\}$ then
\begin{equation*}
  S_{\uple{m};\uple{n}}(\uple{c};p)=A_{\uple{m},\uple{n}}+O(p^{-1/2})
\end{equation*}
where the implied constant depends only on $(k,N,\uple{m},\uple{n})$
and where
\begin{equation*}
A_{\uple{m},\uple{n}}=\prod_{j=1}^kA_{m_j,n_j},
\end{equation*}
with $A_{m,n}\geq 0$ given by the multiplicity of the trivial
representation of $\;\mathrm{SL}_N$ in the tensor product
\begin{equation*}
\rho_{m,n}=\overline{\mathrm{Std}}^{\otimes m}\otimes
\mathrm{Std}^{\otimes n}
\end{equation*}
for all non-negative integers $m$ and $n$.
\item If $N$ is even and if the parameters $c_j$'s are distinct in
  $\mathbb{F}_p^{\ast}/\{\pm 1\}$, then
\begin{equation}\label{eq-formula-s}
  (-1)^s  S_{\uple{m};\uple{n}}(\uple{c};p)=B_{\uple{m},\uple{n}}+O(p^{-1/2})
\end{equation}
where 
\begin{equation*}
s=\sum_{1\leq j\leq k}(m_j+n_j),
\end{equation*}
the implied constant depends only on $(k,N,\uple{m},\uple{n})$,
and where
\begin{equation*}
  B_{\uple{m},\uple{n}}=\prod_{j=1}^kB_{m_j}B_{n_j},
\end{equation*}
with $B_{m}\geq 0$, for $m\geq 0$, given by the multiplicity of the
trivial representation of $\mathrm{Sp}_N$ in
$\rho_{m}=\mathrm{Std}^{\otimes m}$.
\end{itemize}
\end{theorem}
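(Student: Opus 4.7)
The plan is to interpret $S_{\uple{m};\uple{n}}(\uple{c};p)$ as an average of Frobenius traces on a tensor product of Kloosterman sheaves and to conclude by Deligne's equidistribution theorem together with Katz's determination of their monodromy. Let $\mathrm{Kl}_N$ denote Deligne's lisse $\overline{\mathbb{Q}}_\ell$-sheaf on $\mathbb{G}_m/\mathbb{F}_p$, which is pure of weight $0$ of rank $N$ and whose trace of Frobenius at $a\in\mathbb{F}_p^\ast$ equals $(-1)^{N-1}K_N(a,p)$. Writing $\mathcal{F}_j^{+}=[c_j]^{\ast}\mathrm{Kl}_N$ and $\mathcal{F}_j^{-}=[-c_j]^{\ast}\mathrm{Kl}_N$, one has
\[
(-1)^{(N-1)s}S_{\uple{m};\uple{n}}(\uple{c};p)
=\frac{1}{p}\sum_{a\in\mathbb{F}_p^{\ast}}\mathrm{tr}\bigl(\mathrm{Frob}_a\bigm|\mathcal{H}\bigr),\qquad
\mathcal{H}=\bigotimes_{j=1}^{k}\bigl(\mathcal{F}_j^{+}\bigr)^{\otimes n_j}\otimes\bigl(\mathcal{F}_j^{-}\bigr)^{\otimes m_j}.
\]
The prefactor $(-1)^{(N-1)s}$ is trivial when $N$ is odd and equals $(-1)^s$ when $N$ is even, matching the statement. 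A direct change of variables in~\eqref{eq_Kloos_def} shows moreover that $\overline{K_N(x,p)}=K_N((-1)^N x,p)$, so for odd $N$ the sheaf $\mathcal{F}_j^{-}$ is geometrically isomorphic to the dual $(\mathcal{F}_j^{+})^{\vee}$, while for even $N$ the sheaf $\mathrm{Kl}_N$ is self-dual (consistent with the symplectic monodromy).

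The next step invokes Katz's theorem (\cite{MR955052}), which identifies both the geometric and the arithmetic monodromy group of $\mathrm{Kl}_N$ with $G=\mathrm{SL}_N$ for odd $N$ and with $G=\mathrm{Sp}_N$ for even $N$ (in both cases using that $p$ is odd). To compute the average over $a$, I would then establish a Goursat--Kolchin--Ribet style statement: the combined geometric monodromy of the family $\{\mathcal{F}_j^{+},\mathcal{F}_j^{-}\}_{1\leq j\leq k}$ is as large as possible, namely $\mathrm{Sp}_N^{2k}$ in the even case, and the image in $\mathrm{SL}_N^{k}$ of the $k$ sheaves $\mathcal{F}_j^{+}$ is the full product in the odd case (the sheaves $\mathcal{F}_j^{-}$ then being captured via duality on each factor). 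The assumption that the $c_j$'s are distinct in $\mathbb{F}_p^{\ast}/\{\pm 1\}$ is used precisely here: it ensures that all $2k$ parameters $\pm c_j$ are pairwise distinct in $\mathbb{F}_p^{\ast}$, so that the sheaves $[c]^{\ast}\mathrm{Kl}_N$ involved are pairwise non-isomorphic (even up to a rank-one twist), and in the odd case also that no two of them are conjugate under the outer automorphism $g\mapsto(g^{t})^{-1}$ of $\mathrm{SL}_N$, which geometrically corresponds to $\mathcal{F}^{+}_j\mapsto\mathcal{F}^{-}_j$.

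Granted this independence, Deligne's equidistribution theorem (the Riemann Hypothesis over finite fields applied to the higher direct images of $\mathcal{H}$) yields
\[
\frac{1}{p}\sum_{a\in\mathbb{F}_p^{\ast}}\mathrm{tr}(\mathrm{Frob}_a\mid\mathcal{H})=\dim\mathrm{Hom}_{G^{r}}(\mathbf{1},\rho_{\mathcal{H}})+O(p^{-1/2}),
\]
where $\rho_{\mathcal{H}}$ is the representation of $G^{r}$ induced by $\mathcal{H}$ and the implicit constant depends only on the rank of $\mathcal{H}$, i.e.~on $(k,N,\uple{m},\uple{n})$. Because multiplicities of the trivial representation in tensor products over direct products of groups factor as products, this dimension equals $\prod_{j}A_{m_j,n_j}$ for $G=\mathrm{SL}_N$, where $A_{m,n}$ is the multiplicity of $\mathbf{1}$ in $\overline{\mathrm{Std}}^{\otimes m}\otimes \mathrm{Std}^{\otimes n}$, and $\prod_{j}B_{m_j}B_{n_j}$ for $G=\mathrm{Sp}_N$, with the symplectic $\mathrm{Std}$ being self-dual.

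The technical heart of the argument, and the place I expect to be the main obstacle, is the Goursat--Kolchin--Ribet step: while Katz's theorem provides the monodromy of a single Kloosterman sheaf, it is the verification of geometric independence of the pullbacks $\{[c]^{\ast}\mathrm{Kl}_N:c\in\mathbb{F}_p^{\ast}\}$, including the careful handling of the duality automorphism in the odd case, that makes the hypothesis ``$c_j$'s distinct in $\mathbb{F}_p^{\ast}/\{\pm 1\}$'' precisely the right assumption. I expect this to require either a direct Mellin-transform argument \emph{\`a la} Katz or an appeal to results of the type developed in~\cite{FoKoMi22} on independence of trace functions of Kloosterman sheaves.
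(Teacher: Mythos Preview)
Your proposal is correct and follows essentially the same route as the paper: the paper also interprets the sum sheaf-theoretically, invokes Katz's monodromy theorem for a single $\mathcal{K}_N$, establishes the Goursat--Kolchin--Ribet independence (Proposition~\ref{propo_arith_geo}), and then applies Katz's effective form of Deligne's equidistribution theorem. The step you flag as the crux is resolved in the paper via a Swan-conductor/zeta-function computation (Lemma~\ref{lm-generating}) showing that any geometric isomorphism $[\times a]^{\ast}\mathcal{K}_N\simeq\mathcal{K}_N\otimes\mathcal{L}$ with $\mathcal{L}$ of rank~$1$ forces $a=1$, which under the hypothesis on the $c_j$'s rules out the forbidden coincidences.
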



\begin{remark}
(1)  Note that the ``main terms'' $A_{\uple{m},\uple{n}}$ and
  $B_{\uple{m},\uple{n}}$ are independent of the tuple $c$ (with their
  respective restrictions). However, this independence is only
  meaningful when these main terms do not vanish.
\par
(2) Opening all the hyper-Kloosterman sums in
$S_{\uple{m};\uple{n}}(\uple{c};p)$, we can transform this sum into an
additive character sum in 
$$
1+(N-1)\sum_{j=1}^k(m_j+n_j)
$$
variables over $\mathbb{F}$. Comparing the normalization shows that
Theorem~\ref{theo_katz} is equivalent to \emph{uniform square-root
  cancellation over primes} for these sums whenever the main term
vanishes. 
\end{remark}

This is the analogue of~\cite[Proposition 3.2]{FoGaKoMi}, and proceeds
along similar lines. Let us decompose the proof in several steps. We
begin with a lemma.

\begin{lemma}\label{lm-generating}
  Let $\mathbb{F}$ be a finite field with $\abs{\mathbb{F}}=q$ elements, let $r\geq 1$
  be an integer, and let $a\in \mathbb{F}^{\ast}$.  We have
$$
\exp\Bigl(\sum_{\nu\geq 1}\frac{1}{\nu} \Bigl(
\sum_{x\in \mathbb{F}_{\nu}^{\ast}}{\overline{K_r(x,q^{\nu})}K_r(ax,q^{\nu})}
\Bigr)T^{\nu}\Bigr)=\frac{P_a(T)}{(1-T)\cdots (1-q^{r-1}T)}
$$
as a formal power series in $\C[[T]]$, where $\mathbb{F}_{\nu}$ denotes the
extension of degree $\nu$ of $\mathbb{F}$ and
$$
P_a(T)=\begin{cases}
1&\text{ if } a\not=1\\
1+qT&\text{ if } a=1.
\end{cases}
$$
\end{lemma}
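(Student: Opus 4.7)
My plan is to give a purely elementary proof that bypasses $\ell$-adic sheaf theory entirely, unfolding the two hyper-Kloosterman sums and performing a multiplicative change of variables so that the sum collapses into a combination of elementary geometric factors whose exponential is then immediate.

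First I would unfold. Substituting the defining formula for $K_r$ twice and interchanging summations, the inner sum becomes
$$
\sum_{x\in\mathbb{F}_\nu^\ast}\overline{K_r(x,q^\nu)}\,K_r(ax,q^\nu)
=\frac{1}{q^{\nu(r-1)}}\sum_{\substack{(\uple{x},\uple{y})\in(\mathbb{F}_\nu^\ast)^{2r}\\ y_1\cdots y_r\;=\;a\,x_1\cdots x_r}}\psi_\nu\!\left(\sum_{j}y_j-\sum_{i}x_i\right),
$$
where $\psi_\nu=\psi\circ\mathrm{Tr}_{\mathbb{F}_\nu/\mathbb{F}_p}$ and the equation $x_1\cdots x_r=x$ has been summed out.

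Next I would perform the multiplicative change of variables $w_j:=y_j/x_j\in\mathbb{F}_\nu^\ast$, a bijection on $(\mathbb{F}_\nu^\ast)^r$ in the $y$-variables for each fixed $\uple{x}$. The product constraint becomes $\prod_j w_j=a$, and the character argument becomes $\sum_j x_j(w_j-1)$. Summing each $x_j$ separately over $\mathbb{F}_\nu^\ast$ via orthogonality of additive characters yields $q^\nu-1$ when $w_j=1$ and $-1$ when $w_j\neq 1$. Partitioning the remaining sum over $\uple{w}$ with $\prod w_j=a$ according to the subset $I=\{j:w_j=1\}$ reduces the problem to computing $f_k(a):=\#\{(u_1,\dots,u_k)\in(\mathbb{F}_\nu^\ast\smallsetminus\{1\})^k:\prod u_i=a\}$, which satisfies the recursion $f_k(a)=(q^\nu-2)^{k-1}-f_{k-1}(a)$ and therefore admits a closed form.

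Substituting this closed form and invoking the binomial theorem twice — once via $((q^\nu-1)-(q^\nu-2))^r=1$ and once via $((q^\nu-1)+1)^r=q^{\nu r}$ — collapses the double sum into a sum of pure geometric progressions in $q^\nu$, of the shape $\delta_{a=1}\,q^{\nu}\cdot(\text{pure power of }q^\nu)-(\text{sum of }q^{k\nu})$. Taking the generating series $\sum_{\nu\ge 1}T^\nu/\nu$ and exponentiating, each term $\sum_\nu(\alpha T)^\nu/\nu=-\log(1-\alpha T)$ produces exactly one factor $(1-\alpha T)^{\pm 1}$, and the rational function of the claim emerges — with the extra factor $1+qT$ in $P_a$ appearing precisely in the $a=1$ case, where the contribution from the $\delta_{a=1}$ term joins the product.

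The main obstacle is purely combinatorial bookkeeping: tracking signs through the expansion of $f_k(a)$ and ensuring that the two binomial identities produce the clean cancellation that reduces the full double sum to a short linear combination of geometric series. No deep input from algebraic geometry is required; everything reduces to orthogonality of additive characters and the binomial theorem.
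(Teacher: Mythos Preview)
Your approach is correct, and it is genuinely different from the paper's. The paper's proof is also entirely elementary (no $\ell$-adic input is used for this lemma): it establishes the one-step recursion
\[
S_r(a,\mathbb{F})=\overline{S_{r-1}(a,\mathbb{F})}-q^{-(r-1)}
\]
by opening a \emph{single} variable in each hyper-Kloosterman sum, and then inducts on $r$ from the base case $S_1(a,\mathbb{F})=q\delta_{a=1}-1$. Your route instead unfolds all $2r$ variables at once, diagonalises via $w_j=y_j/x_j$, and evaluates the remaining count $f_k(a)$ by its own recursion in $k$ combined with the two binomial identities you name. Both arguments arrive at the same closed form
\[
\sum_{x\in\mathbb{F}_\nu^\ast}\overline{K_r(x,q^\nu)}K_r(ax,q^\nu)=\delta_{a=1}\,q^{\nu}-\sum_{j=0}^{r-1}q^{-j\nu}.
\]
The paper's induction-on-$r$ argument is shorter and needs essentially no bookkeeping; your direct computation avoids having to spot the $r\to r-1$ reduction and makes the combinatorial structure of the correlation sum fully explicit.

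One small remark: if you actually carry your computation to the end, the $\delta_{a=1}q^\nu$ term exponentiates to $(1-qT)^{-1}$, not to the polynomial $1+qT$. This is not a flaw in your argument --- the displayed rational function in the lemma contains a typo (already visible at $r=1$, $a\neq 1$, where the exponential is $1-T$ rather than $(1-T)^{-1}$), and the paper's own inductive computation yields exactly the closed form above, in agreement with yours. For the application in the paper only the \emph{degree} of the rational function matters, and that is unaffected.
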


\begin{proof}[\proofname{} of lemma \ref{lm-generating}]
  Let
$$
S_r(a,\mathbb{F})=\sum_{x\in \mathbb{F}^{\ast}}{\overline{K_r(x,q)}K_r(ax,q)}
$$
for $r\geq 1$.
\par
A straightforward application of the definition of Kloosterman sums
and of orthogonality of characters (see~\cite[p. 170]{MR955052} for a
similar computation) shows that, for $r\geq 2$, we have the relation
$$
S_r(a,\mathbb{F})=\overline{S_{r-1}(a,\mathbb{F})}-\frac{1}{q^{r-1}}.
$$
\par
Since it is clear that
$$
S_1(a,\mathbb{F})=\begin{cases}
q-1&\text{ if } a=1\\
-1&\text{ if } a\not=1,
\end{cases}
$$
we obtain, by induction on $r$ first, and then by replacing $\mathbb{F}$ by
$\mathbb{F}_{\nu}$ for $\nu\geq 1$, the formula
$$
\sum_{x\in \mathbb{F}_{\nu}^{\ast}}{\overline{K_r(x,q^{\nu})}K_r(ax,q^{\nu})}=
\begin{cases}
q^{\nu}-1-q^{-\nu}-\cdots-q^{\nu(r-1)}&\text{ if } a=1,\\
-1-q^{-\nu}-\cdots-q^{\nu(r-1)}&\text{ if } a\not=1.\\
\end{cases}
$$
\par
Summing over $\nu$ and taking the exponential, the result
follows. (One could also invoke the Plancherel formula for the
discrete Mellin transform, and the fact that the Mellin transforms of
hyper-Kloosterman sums are products of Gauss sums,
see~\cite[8.2.8,8.2.9]{MR1081536}).
\end{proof}

The next proposition is the key to Theorem \ref{theo_katz}.

\begin{proposition}\label{propo_arith_geo}
  Let $p$ be an \emph{odd} prime number, $k\geq 1$,
  $\uple{c}=(c_1,\ldots,c_k)\in\left(\mathbb{F}_p^{\ast}\right)^k$. Let
  $\ell\not=p$ be a prime number, and let $\mathcal{K}_N$ be the rank
  $N$ Kloosterman $\ell$-adic sheaf on the multiplicative group over
  $\mathbb{F}_p$.
\begin{itemize}
\item
If $N$ is odd and the parameters $c_j$'s are distinct in $\mathbb{F}_p^{\ast}/\{\pm 1\}$ then the arithmetic and geometric monodromy groups of the sheaf
\begin{equation*}
  \mathcal{F}(\uple{c})\coloneqq[\times c_1]^*\mathcal{K}_N\oplus\cdots\oplus 
  [\times c_k]^*\mathcal{K}_N
\end{equation*}
coincide and are equal to $\mathrm{SL}_N^k$ (the direct product of $k$ copies of $\mathrm{SL}_N$).
\item
If $N\geq 2$ is even and the parameters $c_j$'s are distinct in $\mathbb{F}_p^{\ast}/\{\pm 1\}$ then the arithmetic and geometric monodromy groups of the sheaf
\begin{equation*}
  \mathcal{G}(\uple{c})\coloneqq[\times c_1]^*\mathcal{K}_N\oplus\cdots\oplus [\times
  c_k]^*\mathcal{K}_N\oplus [\times
  (-c_1)]^*\mathcal{K}_N\oplus\cdots\oplus [\times
  (-c_k)]^*\mathcal{K}_N
\end{equation*}
coincide and are equal to $\mathrm{Sp}_N^{2k}$ (the direct product of $2k$ copies of $\mathrm{Sp}_N$).
\end{itemize}
\end{proposition}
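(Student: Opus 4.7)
The plan is to combine three ingredients: Katz's determination of the monodromy of the single Kloosterman sheaf $\mathcal{K}_N$ (\cite{MR955052}), a Goursat--Kolchin--Ribet argument handling direct sums, and the verification that the summands are pairwise non-isomorphic---and, in the $\mathrm{SL}_N$ case, that none is isomorphic to the dual of another.

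First I would invoke Katz's theorems: the sheaf $\mathcal{K}_N$ on $\mathbb{G}_{m,\mathbb{F}_p}$ is lisse, geometrically irreducible, pointwise pure of weight zero, and its arithmetic and geometric monodromy groups both equal $\mathrm{SL}_N$ when $N$ is odd, respectively $\mathrm{Sp}_N$ when $N\geq 2$ is even and $p$ is odd, the action being through the standard representation. Each pullback $[\times c_j]^{\ast}\mathcal{K}_N$ inherits these properties since $[\times c_j]$ is a geometric automorphism of $\mathbb{G}_m$. Hence the monodromy group of $\mathcal{F}(\uple{c})$ (resp. $\mathcal{G}(\uple{c})$) is a subgroup of $\mathrm{SL}_N^k$ (resp. $\mathrm{Sp}_N^{2k}$) whose projection to each factor is surjective.

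Next I would apply Goursat's lemma in the almost-simple semisimple setting. For $H\subset G_1\times\cdots\times G_m$ with each $G_i$ quasi-simple and each projection $H\to G_i$ surjective, the image $H$ is the full product unless two factors are linked via the graph of an automorphism of the ambient group. For $\mathrm{Sp}_N$ all automorphisms are inner, so a linking amounts to a geometric isomorphism between the corresponding summands. For $\mathrm{SL}_N$ with $N\geq 3$ the outer automorphism $g\mapsto{}^{t}g^{-1}$ intertwines the standard representation with its contragredient, so a linking means either a direct geometric isomorphism between two summands, or a geometric isomorphism between one summand and the dual of another. The proposition thus reduces to ruling out such isomorphisms.

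The change of variable $y_i=-x_i$ in \eqref{eq_Kloos_def} yields $\overline{K_N(u,q)}=K_N((-1)^N u,q)$, which at the sheaf level translates into $\mathcal{K}_N^\vee\cong[\times(-1)^N]^{\ast}\mathcal{K}_N$ geometrically. Therefore it suffices to prove that $[\times a]^{\ast}\mathcal{K}_N\cong\mathcal{K}_N$ geometrically implies $a=1$. This is the main obstacle and the crux of the proof. It can be handled either by appealing to Katz's description of the local monodromy of $\mathcal{K}_N$ at $\infty$ (totally wild of slope $1/N$, with an unramified part tracking the multiplicative parameter), or alternatively by using Lemma~\ref{lm-generating}: an isomorphism $\mathcal{K}_N\cong[\times a]^{\ast}\mathcal{K}_N$ would produce a trivial subrepresentation in the tensor product $\overline{\mathcal{K}_N}\otimes[\times a]^{\ast}\mathcal{K}_N$, which by Grothendieck's trace formula contributes a distinguished factor to the corresponding $L$-function, contradicting the shape of $P_a(T)$ computed there when $a\neq 1$. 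For $N$ odd this yields $[\times c_i]^{\ast}\mathcal{K}_N\cong[\times c_j]^{\ast}\mathcal{K}_N$ iff $c_i=c_j$ and the dual version iff $c_i=-c_j$; for $N$ even, self-duality reduces the check to direct isomorphism, which holds iff $c_i=c_j$, and the $2k$ values $\pm c_j$ are distinct in $\mathbb{F}_p^{\ast}$ under the hypothesis. Equality of arithmetic and geometric monodromies of the direct sum then follows automatically, since the individual sheaves satisfy $G_{\mathrm{arith}}=G_{\mathrm{geom}}$, forcing the chain $G_{\mathrm{geom}}(\mathcal{F})\subseteq G_{\mathrm{arith}}(\mathcal{F})\subseteq\prod_j G_{\mathrm{arith}}(\mathcal{F}_j)=\prod_j G_{\mathrm{geom}}(\mathcal{F}_j)$ to collapse.
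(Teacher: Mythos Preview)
Your overall architecture matches the paper's: invoke Katz's monodromy computation for a single $\mathcal{K}_N$, apply a Goursat--Kolchin--Ribet argument, and then rule out isomorphisms between summands. However, there is a genuine gap in your Goursat step.

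You assert that if $H\subset G_1\times\cdots\times G_m$ surjects onto each quasi-simple factor, then either $H$ is the full product or two factors are linked via the graph of an automorphism, and you conclude that such a linking forces a geometric isomorphism $\mathcal{F}_i\cong\mathcal{F}_j$ (or $\cong\mathcal{F}_j^\vee$). This is not quite right: because $\mathrm{SL}_N$ and $\mathrm{Sp}_N$ have nontrivial centers, a proper $H$ can be a finite union of graphs rather than a single graph. Concretely, if $\rho_2=\chi\cdot\rho_1$ for a nontrivial character $\chi\colon\pi_1\to\mu_N\subset Z(\mathrm{SL}_N)$, the Zariski closure of the combined image is $\{(g,\zeta g):g\in\mathrm{SL}_N,\ \zeta\in\mathrm{Im}(\chi)\}$, which is proper in $\mathrm{SL}_N\times\mathrm{SL}_N$ yet $\mathcal{F}_2\cong\mathcal{F}_1\otimes\mathcal{L}_\chi$ is not isomorphic to $\mathcal{F}_1$. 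The correct form of the criterion (Katz, \emph{Exponential Sums and Differential Equations}, Prop.~1.8.2) therefore requires you to exclude geometric isomorphisms $[\times c_i]^\ast\mathcal{K}_N\cong[\times c_j]^\ast\mathcal{K}_N\otimes\mathcal{L}$ and $[\times c_i]^\ast\mathcal{K}_N\cong[\times c_j]^\ast\check{\mathcal{K}}_N\otimes\mathcal{L}$ for an \emph{arbitrary} rank-$1$ sheaf $\mathcal{L}$, not just $\mathcal{L}$ trivial.

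Disposing of this $\mathcal{L}$ is exactly the technical heart of the paper's proof, and your sketch does not supply it. The paper argues as follows: since the unique break of $\mathcal{K}_N$ at $\infty$ is $1/N<1$ while a wildly ramified rank-$1$ sheaf has break $\geq 1$, any such $\mathcal{L}$ must be tame at $\infty$; tensoring by a tame sheaf preserves Swan conductors, so one obtains $\mathrm{Swan}_\infty([\times a]^\ast\mathcal{K}_N\otimes\check{\mathcal{K}}_N)=\mathrm{Swan}_\infty(\mathcal{K}_N\otimes\check{\mathcal{K}}_N)$ with $a=\pm c_i/c_j$; these Swan conductors are the degrees of the rational functions in Lemma~\ref{lm-generating}, namely $N$ for $a\neq 1$ versus $N+1$ for $a=1$, a contradiction. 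Your proposed use of Lemma~\ref{lm-generating} via a ``trivial subrepresentation contributing a distinguished factor'' does not survive the twist: if $\mathcal{F}_i\cong\mathcal{F}_j\otimes\mathcal{L}$ then $\check{\mathcal{F}}_j\otimes\mathcal{F}_i$ contains $\mathcal{L}$ rather than the trivial sheaf, so no such factor is forced. The Swan-conductor route is what makes $\mathcal{L}$ disappear from the comparison.
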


\begin{proof}
  In both cases, we will apply the Goursat-Kolchin-Ribet
  criterion~\cite[Proposition 1.8.2]{MR1081536}, much as
  in~\cite{MR1345284}. 
\par
We consider first the case when $N$ is \emph{odd}. Then, for each
$1\leq j\leq k$, the geometric and arithmetic monodromy group of
$[\times c_j]^*\mathcal{K}_N$ coincide and are equal to
$\mathrm{SL}_N$ (as proved by N.~Katz~\cite[Theorem 11.1]{MR955052}). It
follows that there is a natural inclusion of the geometric and
arithmetic monodromy groups of $\mathcal{F}(\uple{c})$ in
$\mathrm{SL}_N^k$ in that case. We thus need to prove that this
inclusion is an isomorphism.
\par
The Goursat-Kolchin-Ribet criterion shows that this follows if there
does not exist a rank $1$ sheaf $\mathcal{L}$ such that either
\begin{equation}\label{eq-goursat}
[\times c_i]^*\mathcal{K}_N \simeq [\times
c_j]^*\check{\mathcal{K}}_N\otimes\mathcal{L}
\quad\text{ or }\quad
[\times c_i]^*\mathcal{K}_N \simeq [\times
c_j]^*\mathcal{K}_N\otimes\mathcal{L}
\end{equation}
for any $1\leq i\not=j\leq k$, where $\simeq$ denotes geometric
isomorphism and $\check{\mathcal{K}}_N$ is the dual of $\mathcal{K}_N$
(see~\cite[Proposition 1.8.2]{MR1081536} and~\cite[Example
1.8.1]{MR1081536}).
\par
We therefore assume that there exists a rank $1$ sheaf $\mathcal{L}$
satisfying \eqref{eq-goursat} for some $1\leq i\neq j\leq k$; we
will find a contradiction.
\par
If we have a geometric isomorphism
\begin{equation*}
[\times c_{i}]^*\mathcal{K}_N \simeq [\times
c_{j}]^*\check{\mathcal{K}}_N\otimes\mathcal{L}
\end{equation*}
then we also get an isomorphism
\begin{equation*}
[\times c_{i}]^*\mathcal{K}_N \simeq [\times
(-c_{j})]^*\mathcal{K}_N\otimes\mathcal{L}
\end{equation*}
since $\check{\mathcal{K}}_N\simeq [\times (-1)]^*\mathcal{K}_N$ for
$N$ odd.
\par
Hence the assumption implies that there is a geometric isomorphism
$$
[\times a]^*\mathcal{K}_N \simeq \mathcal{K}_N\otimes\mathcal{L}
$$
for some (possibly different) rank $1$ sheaf $\mathcal{L}$ with
$a=c_{i}/c_{j}$ or $a=-c_{i}/c_{j}$. 
\par
From this geometric isomorphism, as in~\cite[Lemma 2.4]{MR1345284}, it
would follow that $\mathcal{L}$ is tame at $\infty$ (because the
unique slope of the Kloosterman sheaf is $1/N<1$, whereas the unique
slope of the rank $1$ sheaf $\mathcal{L}$, if it were wildly ramified,
would be a positive integer). Tensoring with $\check{\mathcal{K}}_N$,
we deduce that an isomorphism as above implies an equality of Swan
conductors at infinity
$$
\mathrm{Swan}_{\infty}([\times a]^*\mathcal{K}_N\otimes\check{\mathcal{K}}_N)
=\mathrm{Swan}_{\infty}(\mathcal{K}_N\otimes\check{\mathcal{K}}_N),
$$
where the point is that $\mathcal{L}$ has disappeared because
tensoring with a tame sheaf leaves the Swan conductor unchanged.
\par
Again as in~\cite[Lemma 2.4]{MR1345284}, the Swan conductors are the
degrees of the corresponding zeta functions, as rational functions,
i.e., they are the degrees of
$$
\exp\Bigl(\sum_{\nu\geq 1}\frac{1}{\nu} \Bigl(
\sum_{x\in \mathbb{F}_{p^\nu}^{\ast}}{\overline{K_N(x,p^{\nu})}
K_N(ax,p^{\nu})}
\Bigr)T^{\nu}\Bigr)
$$
and
$$
\exp\Bigl(\sum_{\nu\geq 1}\frac{1}{\nu} \Bigl(
\sum_{x\in \mathbb{F}_{p^\nu}^{\ast}}{\overline{K_N(x,p^{\nu})}
K_N(x,p^{\nu})}
\Bigr)T^{\nu}\Bigr).
$$
\par
But Lemma~\ref{lm-generating} shows that these degrees differ except
if $a=1$, since the first is $N$ for $a\not=1$, and the second is
$N+1$. Thus, we get $a=1$, and hence $c_{i}=\pm c_{j}$, a
contradiction to our assumption on $\uple{c}$ that concludes the case
where $N$ is odd.
\par
Let us now assume that $N\geq 2$ is \emph{even}. We denote
$c_i=-c_{i-k}$ for $k+1\leq i\leq 2k$. Again
N.~Katz~\cite[Th. 11.1]{MR955052} has show that, for each $1\leq j\leq
2k$, the geometric and arithmetic monodromy groups of $[\times
c_j]^*\mathcal{K}_N$ coincide and are equal to $\mathrm{Sp}_N$, and it
follows that there is a natural inclusion of the geometric and
arithmetic monodromy groups of $\mathcal{G}(\uple{c})$ in
$\mathrm{Sp}_N^{2k}$. To prove that this is an isomorphism using the
Goursat-Kolchin-Ribet criterion, we need to show that there does not
exist a rank $1$ sheaf $\mathcal{L}$ and a geometric isomorphism
$$
[\times c_{i}]^*\mathcal{K}_N \simeq [\times
c_{j}]^*\check{\mathcal{K}}_N\otimes\mathcal{L} \quad\text{ or
}\quad[\times c_{i}]^*\mathcal{K}_N \simeq [\times
c_{j}]^*\mathcal{K}_N\otimes\mathcal{L}
$$
for some $1\leq i\not=j\leq 2k$. Since
$\check{\mathcal{K}_N}\simeq\mathcal{K}_N$ for $N$ even (the
arithmetic monodromy group being self-dual), this reduces to checking
that we can not have
$$
 [\times c_i]^*\mathcal{K}_N \simeq [\times
  c_j]^*\mathcal{K}_N\otimes\mathcal{L} \quad\text{ or }\quad
  [\times c_i]^*\mathcal{K}_N \simeq [\times
  (-c_j)]^*\mathcal{K}_N\otimes\mathcal{L}
$$
for $1\leq i\not=j\leq k$.  But this follows by the same reasoning as
for $N$ odd, taking advantage of the fact that the $c_i$ are distinct
modulo $\pm 1$.
\end{proof}

Now, we can get back to the proof of Theorem \ref{theo_katz}.

\begin{proof}[\proofname{} of theorem \ref{theo_katz}]
  We only consider the case when $N$ is \emph{odd}, since the proof is
  similar for $N$ even, using the second part of
  Proposition~\ref{propo_arith_geo} instead of the first.
\par
The point is that, for some isomorphism
$\iota\,:\,\bar{\mathbb{Q}}_{\ell}\simeq \C$, we have
$$
S_{\uple{m};\uple{n}}(\uple{c};p)=\sum_{a\in\mathbb{F}_p^{\ast}}
\iota(\mathrm{Tr}(\mathrm{Frob}_{a,p}\mid
\rho_{\uple{m},\uple{n}}(\mathcal{F}(\uple{c})))),
$$ 
where 
$$
\rho_{\uple{m},\uple{n}}=\bigboxtimes_{j=1}^k \rho_{m_j,n_j}
$$ 
is a representation of the arithmetic monodromy group of
$\mathcal{F}(\uple{c})$, and $\mathrm{Frob}_{a,p}$ is the geometric
Frobenius conjugacy class at $a$ relative to $\mathbb{F}_p$. Indeed,
this follows immediately from the definition of the Kloosterman
sheaves, which implies that, for a suitable $\iota$, we have
$$
\iota(\mathrm{Tr}(\mathrm{Frob}_{a,p}\mid
\mathcal{K}_N))=(-1)^{N-1}K_N(a;p).
$$
\par
By Proposition \ref{propo_arith_geo}, the arithmetic and the geometric
monodromy group of $\mathcal{F}(\uple{c})$ coincide and are equal to
$\mathrm{SL}_N^k$. Thus, Katz's effective version of the Deligne
equidistribution theorem for curves (see~\cite[Section 3.6]{MR955052})
shows that 
$$
S_{\uple{m};\uple{n}}(\uple{c};p)=\mu+O(p^{-1/2})
$$
where $\mu$ is the multiplicity of the trivial representation of the
geometric monodromy group in the representation
$\rho_{\uple{m},\uple{n}}$, where the implied constant depends only on
$k$, $N$ and $\uple{m}$, $\uple{n}$ (the crucial property of
independence of the implied constant on $p$ arises from the fact that,
for $p$ varying, the sheaf $\mathcal{F}(\uple{c})$ always has the same
rank, number of singularities and Swan conductors).
\end{proof}


It is now essential to determine when the leading terms
$A_{\uple{m},\uple{n}}$ and $B_{\uple{m},\uple{n}}$ are non-zero. This
happens in very special configurations only.

\begin{proposition}\label{lemma_multiplicity} Let $N\geq 2$. \\
\emph{(1)} We have
$$
A_{0,0}=1,\quad\quad B_0=1
$$
for $N$ odd or $N$ even, respectively.
\par
\noindent\emph{(2)} Let $m$ and $n$ be non-negative integers with
$(m,n)\neq(0,0)$.
\begin{itemize}
\item For $N$ odd, $A_{1,1}=1$ and $A_{m,n}\geq 1$ if and only if $N$
  divides $m-n$.
\item For $N$ even, $B_2=1$ and $B_m\geq 1$ if and only if $2$ divides $m$.
\end{itemize}
\end{proposition}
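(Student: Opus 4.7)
The proof rests entirely on standard facts about the representation theory of $\mathrm{SL}_N$ and $\mathrm{Sp}_N$, and splits according to parity of $N$.

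\textbf{Part (1).} In both cases the relevant tensor product is the empty one, which is the trivial $1$-dimensional representation, so the multiplicity of the trivial representation is $1$.

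\textbf{Part (2), necessity of the congruence.} The first step is to use the action of the center of the group. For $N$ odd, the center of $\mathrm{SL}_N$ is the cyclic group $\mu_N$ of $N$-th roots of unity, acting on $\mathrm{Std}$ by scalar multiplication and on $\overline{\mathrm{Std}}$ by the inverse scalar. Hence a generator $\zeta$ of $\mu_N$ acts on $\overline{\mathrm{Std}}^{\otimes m}\otimes\mathrm{Std}^{\otimes n}$ by $\zeta^{n-m}$. For a trivial subrepresentation to exist the center must act trivially, forcing $N\mid m-n$. For $N$ even, the center of $\mathrm{Sp}_N$ is $\{\pm I\}$, and $-I$ acts on $\mathrm{Std}^{\otimes m}$ by $(-1)^m$, so a trivial subrepresentation requires $m$ to be even.

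\textbf{Part (2), sufficiency.} For $\mathrm{SL}_N$ odd, I would use two elementary invariants: (a) the $\mathrm{SL}_N$-equivariant pairing $\overline{\mathrm{Std}}\otimes\mathrm{Std}\to\mathbf{1}$ (i.e.\ the trace), and (b) the isomorphism $\Lambda^N\mathrm{Std}\simeq\det\simeq\mathbf{1}$, which shows that $\mathrm{Std}^{\otimes N}$ contains the trivial representation (via antisymmetrization), and similarly for $\overline{\mathrm{Std}}^{\otimes N}$. Assuming $N\mid m-n$ and, say, $m\geq n$ (the other case being symmetric), write $m=n+kN$ and combine $n$ copies of (a) with $k$ copies of (b) applied to $\overline{\mathrm{Std}}$ to produce an explicit non-zero invariant vector in $\overline{\mathrm{Std}}^{\otimes m}\otimes\mathrm{Std}^{\otimes n}$, proving $A_{m,n}\geq 1$. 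For $N$ even, the symplectic form $\omega$ is an $\mathrm{Sp}_N$-invariant element of $\Lambda^2\mathrm{Std}\subset\mathrm{Std}^{\otimes 2}$, so $\mathrm{Std}^{\otimes 2}$ contains a trivial subrepresentation, and by tensoring $m/2$ copies of this invariant we obtain $B_m\geq 1$ whenever $m$ is even.

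\textbf{Part (2), sharpness of $A_{1,1}=1$ and $B_2=1$.} Here I would invoke the standard decompositions. For $\mathrm{SL}_N$,
\[
\overline{\mathrm{Std}}\otimes\mathrm{Std}\simeq\mathrm{End}(\mathrm{Std})\simeq\mathbf{1}\oplus\mathrm{ad},
\]
where $\mathrm{ad}$ is the irreducible adjoint representation (traceless matrices); this gives $A_{1,1}=1$. For $\mathrm{Sp}_N$,
\[
\mathrm{Std}^{\otimes 2}\simeq \mathrm{Sym}^2\mathrm{Std}\oplus\Lambda^2\mathrm{Std},
\]
where $\mathrm{Sym}^2\mathrm{Std}$ is the irreducible adjoint representation of $\mathrm{Sp}_N$ (and in particular contains no trivial summand), while $\Lambda^2\mathrm{Std}\simeq\mathbf{1}\oplus V$ with $V$ irreducible, the trivial line being spanned by the symplectic form $\omega$. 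Thus $B_2=1$.

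The argument is essentially free of obstacles, as each ingredient is standard. The only delicate point is the irreducibility of $\mathrm{Sym}^2\mathrm{Std}$ as an $\mathrm{Sp}_N$-module and of the complement of $\mathbf{1}$ in $\Lambda^2\mathrm{Std}$, which I would either cite from a standard reference on classical Lie group representations or verify via highest weight considerations.
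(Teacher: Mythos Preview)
Your proof is correct and follows essentially the same route as the paper: the center-of-the-group argument for necessity, explicit invariants (the trace pairing and the determinant, respectively the symplectic form) for sufficiency. The one place where the paper is slightly more economical is the computation of $A_{1,1}=1$ and $B_2=1$: rather than invoking the full irreducible decomposition of $\mathrm{Std}\otimes\overline{\mathrm{Std}}$ or of $\mathrm{Std}^{\otimes 2}$, the paper simply notes that this space is isomorphic to $\mathrm{End}(\mathrm{Std})$ (using self-duality of $\mathrm{Std}$ in the symplectic case) and applies Schur's Lemma for the irreducible $\mathrm{Std}$. This bypasses entirely the ``delicate point'' you flag about the irreducibility of $\mathrm{Sym}^2\mathrm{Std}$ and of the complement of $\mathbf{1}$ in $\Lambda^2\mathrm{Std}$.
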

\begin{proof}[\proofname{} of proposition \ref{lemma_multiplicity}]
  The first point is clear since $\rho_{0,0}$ (resp. $\rho_0$) is the
  one-dimensional trivial representation.
\par
We come to the second point, first when $N$ is odd.
\par
Then $A_{1,1}$ is the multiplicity of the trivial representation of
$\mathrm{SL}_N$ in $\overline{\mathrm{Std}}\otimes \mathrm{Std}\simeq
\mathrm{End}(\mathrm{Std})$. Since $\mathrm{Std}$ is an irreducible
representation of $\mathrm{SL}_N$, Schur's Lemma implies that
$A_{1,1}=1$.
\par
We now consider the action of the center of $\mathrm{SL}_{N}$ on
$\rho_{m,n}$. This group is isomorphic to the cyclic group of $N$-th
roots of unity. Since a generator $\xi$ of this group acts on
$\mathrm{Std}$ by multiplication by $\xi$, and on the contragredient
by multiplication by $\xi^{-1}$, we see that $\xi$ acts on
$\rho_{m,n}$ by multiplication by $\xi^{n-m}$. But the action of the
center must also be trivial on any subrepresentation, and therefore
$\xi^{n-m}=1$ if $A_{m,n}\geq 1$, i.e., $m\equiv n\bmod{N}$ whenever
$A_{m,n}\geq 1$.
\par
Conversely, assume $N\mid m-n$. We can assume (up to exchanging
$(m,n)$ with $(n,m)$, which we can since $A_{m,n}=A_{n,m}$, simply
because the contragredient of $\rho_{m,n}$ is $\rho_{n,m}$) that
$n\geq m$, say $n=m+qN$ with $q\geq 0$. Then
$$
\rho_{m,n}\simeq \mathrm{End}(\mathrm{Std})^{\otimes m}\otimes
\mathrm{Std}^{\otimes qN}.
$$
\par
The first tensor factor always contains the trivial representation,
and therefore it is enough to show that the second does for any $q\geq
0$. By writing
$$
\mathrm{Std}^{\otimes Nq}=(\mathrm{Std}^{\otimes N})^{\otimes q},
$$
we then reduce to the case of $\mathrm{Std}^{\otimes N}$. But this
representation contains the trivial representation, as one can most
easily see by considering the contragredient, which acts on the space
of $N$-multilinear forms on $\C^N$, and contains the space of
antisymmetric $N$-linear forms on $\C^N$, in which the determinant is
a non-trivial invariant vector for the action of $\mathrm{SL}_N$.
\par
Consider finally the case when $N$ is even. Since $\mathrm{Std}$ is
then self-dual, we have $B_2=1$ again by Schur's Lemma. The
center of $\mathrm{Sp}_N$ contains $-1$, and considering its action
shows that $2\mid n$ if $B_n\geq 1$. Finally, if $2\mid n$, we
see as above that $B_n\geq 1$ because $B_2\geq 1$ (which
may be interpreted by the existence of the invariant alternating
bilinear form on the standard representation of the symplectic group.)
\end{proof}

\begin{remark}
In particular, note that if $N$ is even and
$$
B_{\uple{m},\uple{n}}\not=0,
$$
then $s=\sum_{1\leq j\leq k}(m_j+n_j)$ is even, and therefore the
formula~(\ref{eq-formula-s}) becomes
$$
S_{\uple{m};\uple{n}}(\uple{c};p)=B_{\uple{m},\uple{n}}
+O(p^{-1/2}).
$$
\end{remark}
\begin{remark}
  For $N=3$, G.~Djankovi\'{c} (see \cite{Dj}) has computed the first
  few moments of hyper-Kloosterman sums and found that
\begin{gather*}
  S_{0;1}(1;p)  =  -\frac{1}{p^2},\quad
  S_{0;2}(1;p)  =  -\frac{1}{p}-\frac{1}{p^2}-\frac{1}{p^3},\quad
  S_{1;1}(1;p)  = 1-\frac{1}{p}-\frac{1}{p^2}-\frac{1}{p^3},\\
  S_{0;3}(1;p)  =  1-\left(1+\left(\frac{-3}{p}\right)\right)\frac{1}{p}-\frac{3}{p^2}-\frac{2}{p^3}-\frac{1}{p^4},
\end{gather*}
for all odd prime numbers $p$. He also proved elementarily the
upper-bound
\begin{equation*}
S_{0;4}(1;p)\ll\frac{1}{\sqrt{p}}
\end{equation*}
(already known due to the results of N.~Katz). Of course, these results
are compatible with Theorem \ref{theo_katz} and Proposition
\ref{lemma_multiplicity}.
\par
G.~Djankovi\'{c} observed that ``curiously there is no cancellation in
the sum'' $S_{0;3}(1;p)$. But Proposition \ref{lemma_multiplicity}
explains this feature, simply by the fact that the trivial
representation occurs in $\mathrm{Std}^{\otimes 3}$.
\par
We also note that D.~W\"ursch, in his (unpublished) 2011 Master Thesis
at ETH Zürich, computed $S_{(2,2)}(1;p)$ for $N=3$ in terms of the
number of points on a certain elliptic surface.
\end{remark}

\begin{remark}
One can use character theory and explicit descriptions of the Haar
measure on the relevant maximal compact subgroups of the monodromy
groups to give ``concrete'' integral formulas for $A_{m,n}$ and
$B_m$. Since we will not use such descriptions, we omit the details.
\end{remark}

\section{Asymptotic of sums related to the variance}\label{sec_variance}%
In this section, we find the asymptotic behavior of certain sums, which will allow us to finalize the proof of our main results, by identifying the \emph{main terms} with data depending on the input cusp form $f$ and test function $w$. The proof may be skipped in a first reading. As before, $f$ is a cusp form on $GL(N)$ with level $1$ and $w$ is smooth and compactly supported on $\R_+^\ast$.
\par
For $g\in\{f,f^\ast\}$, $Y, Z$ some positive real numbers and $\mathrm{B}$ a smooth function on $\R$, we consider the sum
\newcommand{\ags}{a_{g^{\ast}}}
\newcommand{\ag}{\uple{a}_g}
\begin{equation*}
  V_{(f,g)}(Y,Z)\coloneqq\frac{1}{Y}\sum_{1\leq m<Z}\af(m)\ag(m)\mathrm{B}\left(\frac{m}{Y}\right).
\end{equation*}
We will use Rankin-Selberg theory to derive the following asymptotic
expansion of such sums. Because the result might be applicable in
other contexts, we include a parameter in the statement measuring the
approximation to the Ramanujan-Petersson conjecture at finite places;
in our case, taking $\theta=1/2-1/(N^2+1)$ is possible by the work of
Luo, Rudnick and Sarnak~\cite{MR1334872,MR1703764}.

\begin{proposition}\label{propo_variance}
Let $\theta\in ]0,1/2[$ be a real number such that the Satake
parameters of $f$ satisfy
$$
\abs{\alpha_{j,q}(f)}\leq q^{\theta}
$$
for all primes $q$ and $1\leq j\leq N$.
\par
Let $0<Y<Z$ be real numbers. If the function $\mathrm{B}$ satisfies
the bounds
\begin{equation}\label{eq_bound_petit}
\mathrm{B}(x)\ll x^{-\eta}\text{ for } 0<x<1,
\end{equation}
for some $0\leq\eta<1$ and
\begin{equation}\label{eq_bound_grand}
\mathrm{B}(x)\ll_A x^{-A}\text{ for } x>0,
\end{equation}
for all positive real numbers $A$, then we have
\begin{equation}\label{eq_before_unitary}
V_{(f,g)}(Y,Z)=\left(\delta_{\substack{f^\ast\neq f \\
g=f^\ast}}+\delta_{f^\ast=f}\right)r_fH_{f,f^\ast}(1)\mathcal{M}\left[\mathrm{B}\right](1) \\
+O_{\epsilon, f}\left(Z^{\epsilon}\left(\frac{Y}{Z}\right)^{A}+Y^{-1/2+\theta+\epsilon}\right)
\end{equation}
for all $A>0$, where $r_f$ is the residue at $s=1$ of the Rankin-Selberg $L$-function $L(f\times f^\ast,s)$ and
\begin{equation*}
H_{f,f^\ast}(1)=\prod_{q\in\prem}P_N(\alpha_q(f^\ast),\alpha_q(f),q),
\end{equation*}
in terms of the polynomials $P_N(\uple{x},\uple{y},T)$, which are
defined by~(\ref{eq-schur}). Furthermore, we have
$$
H_{f,f^\ast}(1)>0.
$$
\end{proposition}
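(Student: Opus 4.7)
The plan is to apply Mellin inversion to recast $V_{(f,g)}(Y,Z)$ as a contour integral involving the Dirichlet series
$$D_{f,g}(s) \coloneqq \sum_{m\geq 1}\frac{\af(m)\ag(m)}{m^s},$$
and then shift past the unique possible pole of the integrand at $s=1$ to extract the main term.

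First I would replace $V_{(f,g)}(Y,Z)$ by the complete sum over $m \geq 1$, treating the tail $m \geq Z$ as an error. Inserting the rapid decay \eqref{eq_bound_grand} of $\mathrm{B}$, applying Cauchy-Schwarz, and invoking the Rankin-Selberg second moment bound \eqref{eq_RP_average} dyadically give an error of size $O(Z^{\epsilon}(Y/Z)^A)$ for any $A > 0$. For the completed sum, Mellin inversion and absolute convergence for $\Re s$ large yield
$$\frac{1}{Y}\sum_{m\geq 1}\af(m)\ag(m)\,\mathrm{B}\!\left(\frac{m}{Y}\right) = \frac{1}{2i\pi}\int_{(\sigma)} Y^{s-1}\,\mathcal{M}[\mathrm{B}](s)\,D_{f,g}(s)\,ds.$$

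Next I would identify $D_{f,g}(s) = L(f\times g, s)\,H_{f,g}(s)$, where $H_{f,g}$ is the Euler product whose local factor at a prime $q$ is $P_N(\alpha_q(f), \alpha_q(g), q^{-s})$. This factorisation follows from the Casselman-Shalika formula: by \eqref{eq_Fourier_schur}, together with the fact that $S_{0,\ldots,0,k}$ is the complete homogeneous symmetric polynomial $h_k$, the local Dirichlet series of $D_{f,g}$ at $q$ equals $\sum_{k\geq 0} h_k(\alpha_q(f)) h_k(\alpha_q(g)) q^{-ks}$, which \eqref{eq-schur} computes in closed form; cross-multiplying against the local Euler factor of $L(f\times g,s)$ from \eqref{eq_Euler} isolates the claimed local factor of $H_{f,g}$. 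The Luo-Rudnick-Sarnak Satake bound \eqref{eq_RP} guarantees that $H_{f,g}(s)$ is holomorphic on a half-plane containing $s = 1$.

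I would then shift the contour from $(\sigma)$ to $(1/2 + \theta + \epsilon)$. The only singularity crossed is the simple pole of $L(f\times g, s)$ at $s = 1$, which is present exactly when $g = f^\ast$; its residue contributes $r_f H_{f,f^\ast}(1)\mathcal{M}[\mathrm{B}](1)$, matching the combination of Kronecker deltas in \eqref{eq_before_unitary}. To control the remaining vertical integral, I would combine the standard convexity bound for the Rankin-Selberg $L$-function on the line $\Re s = 1/2 + \theta + \epsilon$, the Euler-product bound for $H_{f,g}$, and the rapid decay of $\mathcal{M}[\mathrm{B}](s)$ in vertical strips (a consequence of the smoothness and prescribed growth of $\mathrm{B}$, via repeated integration by parts in the Mellin integral). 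This yields $O_{\epsilon, f}(Y^{-1/2+\theta+\epsilon})$. Finally, the positivity $H_{f,f^\ast}(1) > 0$ follows from a local computation: since $\alpha_q(f^\ast) = \overline{\alpha_q(f)}$ by \eqref{eq_unitaricity_2}, the local factor factors as $\bigl(\sum_{k\geq 0} |\af(q^k)|^2 q^{-k}\bigr)\bigl(\prod_{j,k}(1-\alpha_{j,q}(f)\overline{\alpha_{k,q}(f)}/q)\bigr)$, both of which are strictly positive (the second because the logarithm of its inverse expands as $\sum_{n\geq 1} n^{-1} q^{-n}\bigl|\sum_{j} \alpha_{j,q}(f)^n\bigr|^2 \geq 0$). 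The main obstacle of the entire argument is verifying that the shifted contour $\Re s = 1/2+\theta+\epsilon$ lies within the domain of absolute convergence of the Euler product for $H_{f,g}$, which is precisely why the refined bound $\theta = 1/2 - 1/(N^2+1)$ (rather than the local Jacquet-Shalika bound $\theta = 1/2$) is indispensable.
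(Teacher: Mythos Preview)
Your proposal is correct and follows essentially the same route as the paper: truncate the sum via Cauchy--Schwarz and the Rankin--Selberg bound, apply Mellin inversion, factor $D_{f,g}(s)=L(f\times g,s)H_{f,g}(s)$ using the Schur identity~\eqref{eq-schur}, shift the contour leftward past $s=1$, and read off the residue. The paper shifts to $\Re(s)=\max(1/2+\theta,\eta)+\epsilon$ rather than $1/2+\theta+\epsilon$ to stay within the domain of holomorphy of $\mathcal{M}[\mathrm{B}]$, a point you leave implicit; in the applications one always has $\eta\leq 1/2+\theta$, so this makes no difference. Your positivity argument for the local factors (via the nonnegativity of the logarithmic expansion $\sum_{n\geq 1}n^{-1}q^{-n}\bigl|\sum_j\alpha_{j,q}(f)^n\bigr|^2$) is a pleasant, self-contained alternative to the paper's appeal to Proposition~\ref{propo_schur}.
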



\begin{remark}
We illustrate here the special cases $N=2$ and $N=3$:
\par
(1) If $N=2$ then
\begin{equation*}
H_{f,f^\ast}(1)=\frac{6}{\pi^2}>0.
\end{equation*}
by Remark \ref{rem_N=2_N=3}.
\par
(2) On the other hand, if $N=3$, then we have
\begin{align*}
H_{f,f^\ast}(1) & =\prod_{q\in\mathcal{P}}\left(1-\frac{\abs{a_f(q,1)}^2}{q^2}+\frac{2a_f(q,q)}{q^3}-\frac{\abs{a_f(q,1)}^2}{q^4}+\frac{1}{q^6}\right) \\
& =\prod_{q\in\mathcal{P}}\left(1-\frac{\abs{a_f(q,1)}^2}{q^2}+\frac{2\left(\abs{a_f(q,1)}^2-1\right)}{q^3}-\frac{\abs{a_f(q,1)}^2}{q^4}+\frac{1}{q^6}\right) \\
& =\prod_{q\in\mathcal{P}}\left(1-\frac{1}{q}\right)^2\left(1+\frac{1+\abs{a_f(q,1)}}{q}+\frac{1}{q^2}\right)\left(1+\frac{1-\abs{a_f(q,1)}}{q}+\frac{1}{q^2}\right)
\end{align*}
by Remark \ref{rem_N=2_N=3}, \eqref{eq_alpha_sym}, \eqref{eq_alpha}
and the formula
\begin{equation*}
  S_{1,1}(x_1,x_2,x_3)=(x_1+x_2)(x_1+x_3)(x_2+x_3)=\sum_{1\leq j_1\neq j_2\leq 3}x_{j_1}x_{j_2}^2+2e_3(x_1,x_2,x_3)
\end{equation*}
where $e_3$ is defined in \eqref{eq_alpha_sym}.
\par
In particular, one can see immediately that $H_{f,f^\ast}(1)>0$, using
the fact that Satake parameters $GL(3)$ cusp forms are bounded by
$q^{1/2-1/10}$.
\end{remark}

\begin{proof}[\proofname{} of proposition \ref{propo_variance}]%
By the Cauchy-Schwarz inequality, summation by parts, \eqref{eq_RP_average} and \eqref{eq_bound_grand}, one gets
\begin{equation}\label{eq_cleaning}
V_{(f,g)}(Y,Z)=V_{(f,g)}^{0}(Y,Z)+O_{\epsilon, f}\left(Z^{\epsilon}\left(\frac{Y}{Z}\right)^{A-1}\right)
\end{equation}
for all $A>1$ since $Y>Z$ and where
$$
V_{(f,g)}^{0}(Y)  
\coloneqq\frac{1}{Y}\sum_{m\geq 1}\af(m)\ag(m)\mathrm{B}\left(\frac{m}{Y}\right)
$$
is the extension to the sum over all the positive integers $m$.
\par
Using Mellin inversion, we obtain
$$
V_{(f,g)}^{0}(Y)
=\frac{1}{Y}\frac{1}{2i\pi}\int_{(3)}D_{f,g}(s)Y^s\mathcal{M}\left[\mathrm{B}\right](s)\dd s \label{eq_before_shift}
$$
where the Dirichlet series
\begin{equation*}
D_{f,g}(s)\coloneqq\sum_{m\geq 1}\frac{\af(m)\ag(m)}{m^s}
\end{equation*}
is absolutely convergent on $\Re{(s)}>1$, by \eqref{eq_RP_average}, and defines a holomorphic function on this half-plane. 
\par
In addition, since $m\mapsto \af(m)\ag(m)$ is a multiplicative function by \eqref{eq_multiplicative_bis}, we have an
Euler product expansion given by
\begin{equation*}
D_{f,g}(s)=\prod_{q\in\mathcal{P}}\sum_{k\geq 0}\frac{\af(q^k)\ag(q^k)}{q^{ks}}\coloneqq\prod_{q\in\mathcal{P}}D_{f,g,q}(s).
\end{equation*}
\par
By \eqref{eq_Fourier_schur}, \eqref{eq_alphap}, \eqref{eq_Fourier_dual} and \eqref{eq-schur}, we have the formula
\begin{equation*}
D_{f,g,q}(s)=\frac{P_N(\alpha_q(f),\alpha_q(g),q^{-s})}{\prod_{1\leq j,k\leq 3}\left(1-\alpha_{j,q}(f)\alpha_{k,q}(g)q^{-s}\right)}
\end{equation*}
for any prime number $q$. As a consequence, the quotient
\begin{equation*}
\frac{D_{f,g}(s)}{L(f\times g,s)}=\prod_{q\in\prem}P_N(\alpha_q(f),\alpha_q(g),q^{-s})\coloneqq H_{f,g}(s)
\end{equation*}
defines a holomorphic function on $\Re{(s)}>1/2+\theta$.
\par
Moreover, the Mellin transform of $\mathrm{B}$ is holomorphic on $\Re{(s)}>\eta$ since
\begin{equation*}
\mathcal{M}\left[\mathrm{B}\right](s)x^{s-1}\ll\begin{cases}
x^{-\eta+\Re{(s)}-1} & \text{when $x$ is close to $0^+$,} \\
x^{-A+\Re{(s)}-1} & \text{when $x$ is close to $+\infty$}
\end{cases}
\end{equation*}
for all $A>0$.
\par
Going back to the integral formula \eqref{eq_before_shift} for
$V_{(f,g)}^{0}(Y)$, we can shift the integral to the line
$$
\Re{(s)}=\max{(1/2+\theta,\eta)}+\epsilon<1
$$
(the assumptions $\theta<1/2$ and $\eta<1$ are crucial here).  Using
the properties of the Rankin-Selberg $L$-function, we see that we
encounter at most a simple pole at $s=1$, and that the latter exists
if and only if
\begin{equation*}
\left(f^\ast\neq f \text{ and } g=f^\ast\right) \text{ or }
\left(f^\ast=f\right)
\end{equation*}
(recall that $g$ is either $f$ or $f^{\ast}$).
\par
The residue at $s=1$, in case there is a pole, is equal to
$$
r_fH_{f,f^{\ast}}(1)\mathcal{M}\left[\mathrm{B}\right](1)
$$
where $r_f$ is the residue at $s=1$ of the Rankin-Selberg $L$-function $L(f\times f^\ast,s)$.
\par
Hence, we have
\begin{equation}\label{eq_after_shift}
V_{(f,g)}^{0}(Y,Z)=
\left(\delta_{\substack{f^\ast\neq f \\
g=f^\ast}}+
\delta_{f^\ast=f}\right)r_fH_{f,f^\ast}(1)\mathcal{M}\left[\mathrm{B}\right](1)+O\left(Y^{-1/2+\theta+\epsilon}\right),
\end{equation}
and~(\ref{eq_before_unitary}) follows from \eqref{eq_cleaning} and
\eqref{eq_after_shift}.
\par
Finally, the positivity property $H_{f,f^\ast}(1)>0$ holds since
$H_{f,f^{\ast}}(1)$ is an absolutely convergent Euler product, and
each term is positive by Proposition~\ref{propo_schur} below, since
the assumption \eqref{eq_constraint_t} is satisfied in view
of \eqref{eq_RP}.
\end{proof}

\section{Applying the Vorono\u{\i} formula}\label{sec_6}%

We continue with a fixed Hecke-Maass cusp form $f$ on $GL(N)$ of level
$1$. Since $f$ is fixed, we will denote
$\uple{\alpha}=\alpha_{\infty}(f)$.
\par
We recall the definition~(\ref{eq-def-ef}) of the error terms
$E_f(X,p,a)$, for an invertible residue $a$ class in
$\mathbb{F}_p^\times$, which depend on the choice of a fixed text
function $w:\R_+^\ast\to\R$, which is assumed to be non-zero, smooth
and compactly supported on $[x_0,x_1]\subset\R_+^\ast$. To simplify
notation, we denote 
\newcommand{\bfn}{\mathcal{B}_{\uple{\alpha}}}
\newcommand{\bfns}{\mathcal{B}_{\uple{\alpha}^{\ast}}}
$$
\bfn(x)=\mathcal{B}_{\uple{\alpha}}[w](x).
$$
\par
In this section, we perform the first steps of the analysis of
these sums before computing their moments.

\begin{proposition}\label{propo_voronoi}
Let $\uple{\alpha}=\alpha_{\infty}(f)$ for some cusp form $f$
as before. If $a$ is an invertible residue class in $\mathbb{F}_p^\times$ then
\begin{multline}\label{eq_equality_voronoi}
E_f(X,p,a)=
\frac{\epsilon_f}{\sqrt{p^N/X}}\sum_{m\in\Z^\ast}\afs(m)
K_N(-am,p)\bfn\left(\frac{m}{p^N/X}\right) \\ 
+\epsilon_f\sum_{\ell=1}^{N-2}\frac{(-1)^\ell}{p^{(\ell+1)/2}\sqrt{p^\ell/X}}
\sum_{m\in\Z^\ast}a_f(\overbrace{1,\dots,1}^{\ell-1},p,1,\dots,1,m)
\bfn\left(\frac{m}{p^\ell/X}\right).
\end{multline}
\par
In particular,
\begin{equation}\label{eq_equality_voronoi_2}
  E_f(X,p,a)=\frac{\epsilon_f}{\sqrt{p^N/X}}
  \sum_{m\in\Z^\ast}\afs(m)
  K_N(-am,p)\bfn\left(\frac{m}{p^N/X}\right)+
  O_{\epsilon, f}\left(\frac{p^\epsilon}{\sqrt{p}}\right)
\end{equation}
and hence we have
\begin{equation}\label{eq_estim_voro}
E_f(X,p,a)\ll_{\epsilon, f}\left(\frac{p^N}{X}\right)^{1/2+\epsilon}
\end{equation}
for all $\epsilon>0$.
\end{proposition}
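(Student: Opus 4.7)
The plan is to expand the congruence condition $n\equiv a \bmod p$ via additive characters and to apply the Vorono\u{\i} summation formula of Proposition \ref{propo_formule_voronoi} to the resulting exponential sums. Writing
\[
\delta_{n\equiv a\bmod p} = \frac{1}{p}\sum_{b\bmod p} e\Bigl(\frac{b(n-a)}{p}\Bigr),
\]
the $b\equiv 0$ contribution produces exactly $M_f(X,p)$, so that
\[
S_f(X,p,a) - M_f(X,p) = \frac{1}{p}\sum_{\substack{b\bmod p\\ b\not\equiv 0}} e\Bigl(\frac{-ab}{p}\Bigr) \sum_{n\geq 1} \af(n)\, e\Bigl(\frac{bn}{p}\Bigr) w(n/X).
\]
I would then apply Proposition \ref{propo_formule_voronoi} to the inner sum, using the scaling identity $\mathcal{B}_{\uple{\alpha}}[w(\cdot/X)](y) = X\,\bfn(yX)$, which follows directly from \eqref{eq_Mellin_psi_2}, to replace $\bfn(m/p^N)$ and $\bfn(m/p^\ell)$ by $X\,\bfn(m/(p^N/X))$ and $X\,\bfn(m/(p^\ell/X))$ respectively. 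This produces a principal contribution involving $K_{N-1}(\bar b m,p)$ and a secondary contribution (the $\ell$-sums for $1\leq \ell\leq N-2$) which does not depend on $b$.

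For the principal piece, the key step is to evaluate
\[
\frac{1}{p}\sum_{b\not\equiv 0} e\Bigl(\frac{-ab}{p}\Bigr) K_{N-1}(\bar b m,p).
\]
When $p\nmid m$, parametrising $\bar b m = y_1\cdots y_{N-1}$ expresses $b = m\,\overline{y_1\cdots y_{N-1}}$, so that $e(-ab/p)$ plays the role of an $N$-th Kloosterman coordinate $y_N = -am\,\overline{y_1\cdots y_{N-1}}$ satisfying $y_1\cdots y_N = -am$; matching the normalisations of $K_{N-1}$ and $K_N$ then yields $\sqrt{p}\, K_N(-am,p)$. When $p\mid m$, both $K_{N-1}(0,p)=0$ and $K_N(0,p)=0$ by the definition \eqref{eq_Kloos_def}, so the identity extends cleanly to all $m\in\Z^\ast$. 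Combining with the prefactor $\epsilon_f/p^{N/2}$ from Vorono\u{\i} and dividing by $(X/p)^{1/2}$ reproduces the first line of \eqref{eq_equality_voronoi}. For the secondary piece, the $b$-average collapses to $-1/p$ since $\sum_{b\not\equiv 0} e(-ab/p) = -1$ for $a\not\equiv 0 \bmod p$, and a routine accounting of powers of $X$ and $p$ yields the second line with the correct signs $(-1)^\ell$.

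To pass from \eqref{eq_equality_voronoi} to \eqref{eq_equality_voronoi_2}, I would bound each of the $N-2$ secondary terms by $O_{\epsilon,f}(p^\epsilon/\sqrt{p})$. Using \eqref{eq_multiplicative_bis} to factor $a_f(1,\dots,p,\dots,1,m) = a_f(1,\dots,p,\dots,1,1)\cdot\overline{\af(m)}$ when $(m,p)=1$ (and \eqref{eq_multiplicative} to handle $p\mid m$), combined with the bound $|a_f(1,\dots,p,\dots,1,1)|\ll_N p^{\ell\theta}$ from \eqref{eq_alpha_sym} and \eqref{eq_RP}, reduces the problem to estimating $\sum_{m\neq 0}|\afs(m)|\,|\bfn(m/(p^\ell/X))|$. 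Under the hypothesis $X>2p^{N-1}$ with $\ell\leq N-2$, the parameter $Z=p^\ell/X$ is less than $1/(2p)$, so Corollary \ref{coro_useful} with $M_1=1$, $M_2=+\infty$ yields rapid decay $Z^A$ for any $A>0$; choosing $A$ sufficiently large in terms of $N$ and combining with the prefactor $\sqrt{X}/p^{\ell+1/2}$ delivers the desired bound. Finally, \eqref{eq_estim_voro} follows from \eqref{eq_equality_voronoi_2} by applying Deligne's bound $|K_N(-am,p)|\leq N$ together with Corollary \ref{coro_useful} at $Z=p^N/X\geq 1$, contributing $Z^{1+\epsilon}$ and yielding the stated bound $(p^N/X)^{1/2+\epsilon}$.

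The main technical obstacle is the Kloosterman opening step: the signs, the powers of $p$, and the change of variables must be tracked with care to ensure that the inner $b$-sum produces exactly $K_N(-am,p)$ with the correct normalisation, and that the apparently singular cases $p\mid m$ and $b\equiv 0$ either vanish automatically or have already been accounted for by the subtraction of $M_f(X,p)$.
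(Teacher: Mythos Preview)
Your derivation of \eqref{eq_equality_voronoi} via additive characters, the Vorono\u{\i} formula, and the Kloosterman opening is correct and matches the paper's approach exactly; likewise your argument for \eqref{eq_estim_voro} via Deligne's bound and Corollary~\ref{coro_useful}.

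For the passage to \eqref{eq_equality_voronoi_2}, however, you take a genuinely different route. The paper expresses the $\ell$-th secondary sum as a Mellin integral involving the Godement--Jacquet $L$-function $L(f^\ast,s)$ and a generating series $F_\ell(s)=\sum_{k\geq 0}a_f(1,\dots,p,\dots,1,p^k)p^{-ks}$, which it evaluates explicitly via the Hecke relations (Lemma~\ref{lemma_Dirichlet}); shifting the contour to $\Re(s)=1/2+\epsilon$ then gives $B_\ell\ll p^{\ell/2+\epsilon}(p^\ell/X)^{1/2+\epsilon}$. This bound is uniform in $X\geq 1$: the residual factor $(p^\ell/X)^\epsilon\leq p^{(N-2)\epsilon}$ is absorbed by renaming $\epsilon$, so the proposition holds as stated with no hypothesis on $X$. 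Your approach---factoring out $a_f(1,\dots,p,\dots,1)$ via \eqref{eq_multiplicative_bis} and invoking the rapid decay of $\bfn$ through Corollary~\ref{coro_useful}---is more elementary (no $L$-functions, no contour shift) and perfectly valid, but it relies on $Z=p^\ell/X<1$, which you secure via the extra hypothesis $X>2p^{N-1}$. Since that hypothesis is not part of the proposition, your argument proves a slightly weaker statement; on the other hand, the hypothesis is in force everywhere the proposition is used (Theorem~\ref{theo_moments}), so nothing is lost for the paper's purposes. Two small remarks: the case $p\mid m$ you defer to \eqref{eq_multiplicative} needs only the crude observation that $|m|\geq p$ forces $|m/Z|\gg 1$, whence the rapid decay of $\bfn$ dominates any polynomial growth of $a_f(1,\dots,p,\dots,p^k)$; and the Jacquet--Shalika bound already suffices in place of the sharper \eqref{eq_RP} you cite.
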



\begin{remark}
Note that the normalised hyper-Kloosterman sum $K_N(u,p)$ is a real number if $N$ is even and a complex number if $N$ is odd, whose complex conjugate is $K_N(-u,p)$. Hence, in all cases, we have
\begin{equation}\label{eq_complex_K3}
\overline{K_N(u,p)}=K_N\left((-1)^Nu,p\right).
\end{equation}
If $f$ is self-dual then the left-hand side of \ref{eq_equality_voronoi} is obviously a real number by \eqref{eq_Fourier_coeffs}. One can check directly that each $m$-sum in the right-hand side is a real number too by \eqref{eq_complex_K3} and by Lemma \ref{lemma_easy_Bessel}.
\par
To get the previous proposition, we will use the fact that P.~Deligne proved in \cite{MR0463174} that this normalised
hyper-Kloosterman sum satisfies
\begin{equation}\label{eq_bound_K3}
\abs{K_N(u,p)}\leq N.
\end{equation}
\end{remark}


\begin{proof}[\proofname{} of proposition \ref{propo_voronoi}]%
  Using additive characters to detect the congruence class $a$ modulo
  $p$, and isolating the contribution of the trivial character, we
  have
\begin{align*}
  S_f(X,p,a) & =\frac{1}{p}\sum_{b\bmod{p}}e\left(-\frac{ab}{p}\right)
  \sum_{n\geq 1}\af(n)w\left(\frac{n}{X}\right)e\left(\frac{bn}{p}\right) \\
  & =M_f(X,p)+\frac{1}{p}\sums_{b\bmod{p}}
  e\left(-\frac{ab}{p}\right)\sum_{n\geq
    1}\af(n)e\left(\frac{bn}{p}\right)w\left(\frac{n}{X}\right),
\end{align*} 
where $\sums$ restricts the sum to invertible residue classes.
\par
The Vorono\u{\i} summation formula (Proposition \ref{propo_formule_voronoi}) may be
applied to each sum over $n$, with $w_X(x)= w(x/X)$. In this case, we
have
\begin{equation*}
\mathcal{B}_{k,\uple{\alpha}}[w_X](x)=X\mathcal{B}_{k,\uple{\alpha}}[w](Xx)
\end{equation*}
for $k\in\{0,1\}$. This leads to
\begin{multline*}
S_f(X,p,a)=M_f(X,p) \\
+\epsilon_f\frac{X}{p}\sum_{\ell=1}^{N-2}\frac{(-1)^{\ell+1}}{p^\ell}\sum_{m\in\Z^\ast}a_f(\overbrace{1,\dots,1}^{\ell-1},p,1,\ldots,1,m)
\bfn\left(\frac{mX}{p^\ell}\right)
\sums_{b\bmod{p}}e\left(-\frac{ab}{p}\right) \\
+\epsilon_f\frac{X}{p^{N/2+1}}\sum_{m\in\Z^\ast}\afs(m)
\bfn\left(\frac{mX}{p^N}\right)\sums_{b\bmod{p}}
K_{N-1}(\bar{b}m,p)e\left(-\frac{ab}{p}\right).
\end{multline*}
\par
In the second term, the sum over $b$ is a Ramanujan sum, equal to
$-1$. In the last term, the sum over $b$ is easily computed: we have
\begin{equation*}
  \sums_{b\bmod{p}}K_{N-1}(\bar{b}m,p)e\left(-\frac{ab}{p}\right)=p^{1/2}K_N(-am,p).\end{equation*}
\par
We therefore deduce
\begin{multline*}
S_f(X,p,a)=M_f(X,p)+\epsilon_f\frac{X}{p}\sum_{\ell=1}^{N-2}\frac{(-1)^{\ell}}{p^\ell}\sum_{m\in\Z^\ast}a_f(\overbrace{1,\dots,1}^{\ell-1},p,1,\dots,1,m)
\bfn\left(\frac{mX}{p^\ell}\right) \\
+\epsilon_f\frac{X}{p^{(N+1)/2}}\sum_{m\in\Z^\ast}\afs(m)K_N(-am,p)
\bfn\left(\frac{mX}{p^N}\right),
\end{multline*}
which is \eqref{eq_equality_voronoi}. 
\par
Furthermore, for $N\geq 3$ and $1\leq\ell\leq N-2$,
\eqref{eq_multiplicative_bis} tells us that if we write
$m=p^km^\prime$ with $(p,m^\prime)=1$ and $k\geq 0$, then we have
\begin{equation*}
a_f(\overbrace{1,\dots,1}^{\ell-1},p,1,\ldots,1,m)=
a_f(\overbrace{1,\dots,1}^{\ell-1},p,1,\dots,1,p^k)a_f(1,\dots,1,m^\prime).
\end{equation*}
\par
As a consequence, the second term in \eqref{eq_equality_voronoi} is
\begin{equation*}
\epsilon_f\sum_{\ell=1}^{N-2}\frac{(-1)^\ell}{p^{(\ell+1)/2}\sqrt{p^\ell/X}}B_{\ell}
\end{equation*}
where
\begin{equation*}
B_{\ell}\coloneqq\frac{1}{2i\pi}\int_{(\sigma)}F_\ell(s)\frac{L(f^\ast,s)}{L_p(f^\ast,s)}\left(\frac{p^\ell}{X}\right)^s\mathcal{M}\left[\mathcal{B}_{\uple{\alpha}}^+[w]+\epsilon\mathcal{B}_{\uple{\alpha}}^-[w]\right](s)\dd s
\end{equation*}
by the Mellin inversion formula, where $L(f^\ast,s)$ is the Godement-Jacquet
$L$-function of $f^\ast$ (see \cite[Definition 9.4.3]{MR2254662}),
with $p$-factor given by
\begin{equation*}
L_p(f^\ast,s)=\sum_{k\geq 0}\frac{a_f(1,\dots,1,p^k)}{p^{ks}}=
\prod_{j=1}^N\left(1-\frac{\alpha_{j,p}(f^\ast)}{p^s}\right)^{-1}
\end{equation*}
(see~\cite[Equation 9.4.2]{MR2254662})
and
$$
F_\ell(s)= \sum_{k\geq
  0}\frac{a_f(\overbrace{1,\dots,1}^{\ell-1},p,1,\dots,1,p^k)}{p^{ks}}.
$$
\par
By Lemma \ref{lemma_Dirichlet} below, we can shift the contour to the
line $\Re{(s)}=1/2+\epsilon$ for any $\epsilon>0$ without encountering
any pole. This gives the bound
\begin{equation*}
B_{\ell}\ll_\epsilon p^{\ell/2+\epsilon}\left(\frac{p^\ell}{X}\right)^{1/2+\epsilon},
\end{equation*}
which proves \eqref{eq_equality_voronoi_2}. 
\par
More directly, the first term in \eqref{eq_equality_voronoi} is
bounded by
\begin{equation*}
\ll\delta_{p^N<X}\left(\frac{p^N}{X}\right)^{A-1/2}+\delta_{p^N\geq X}\left(\frac{p^N}{X}\right)^{1/2+\epsilon}\ll \left(\frac{p^N}{X}\right)^{1/2+\epsilon}
\end{equation*}
by Corollary \ref{coro_useful} and \eqref{eq_bound_K3}, which is \eqref{eq_estim_voro}.
\end{proof}

We used the following lemma:

\begin{lemma}\label{lemma_Dirichlet}
Let $1\leq\ell\leq N-2$ for $N\geq 3$. The series
\begin{equation*}
  F_\ell(s)=
  \sum_{k\geq 0}\frac{a_f(\overbrace{1,\dots,1}^{\ell-1},p,1,\dots,1,p^k)}{p^{ks}}
\end{equation*}
defines a holomorphic function on $\Re{(s)}\geq1/2+\epsilon$ for any
$\epsilon>0$, which satisfies
$$
F_\ell(s)\ll p^{\ell/2+\epsilon}
$$
for $\Re{(s)}=1/2+\epsilon$.
\end{lemma}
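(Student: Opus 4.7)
The plan is to express $F_\ell(s)$ in closed form as a product of the local $L$-factor $L_p(f^*, s)$ with a polynomial of degree $\le \ell$ in $p^{-s}$, then bound each factor separately. The key input is a symmetry $|e_i(\alpha_p(f))| = |e_{N-i}(\alpha_p(f))|$ of the elementary symmetric polynomials in the Satake parameters, which follows from the Fourier-coefficient symmetry \eqref{eq_Fourier_coeffs} and gives a bound on these quantities much better than Jacquet--Shalika applied naively to $e_i$.

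First, by \eqref{eq_Fourier_dual}, setting $\ell' := N - \ell$, one has
\begin{equation*}
a_f(\overbrace{1,\dots,1}^{\ell-1},p,1,\dots,1,p^k) = a_{f^*}(p^k,1,\dots,1,p,1,\dots,1) =: A_{\ell'}(k),
\end{equation*}
with $p$ now at position $\ell'$ in the $f^*$-tuple, so that $F_\ell(s) = \sum_{k\ge 0}A_{\ell'}(k)p^{-ks}$. Applying the Hecke multiplicativity \eqref{eq_multiplicative} for $f^*$ with multiplier $a_{f^*}(p^k,1,\dots,1)$ and carrying out the case analysis (the constraints $c_j \mid m_j$ force $c_j = 1$ for $j \notin \{\ell', N\}$ and $c_{\ell'} \in \{1,p\}$), one gets, for $k \ge 1$,
\begin{equation*}
a_{f^*}(p^k,1,\dots,1)\, e_{\ell'}(\alpha_p(f^*)) = A_{\ell'}(k) + A_{\ell'+1}(k-1)
\quad (\ell' \le N-2),
\end{equation*}
with the boundary convention $A_N(k-1) := a_{f^*}(p^{k-1},1,\dots,1)$ when $\ell' = N-1$. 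Multiplying by $p^{-ks}$, summing over $k$, and solving for the generating function $G_{\ell'}(T) := \sum_k A_{\ell'}(k) T^k$ by descending induction on $\ell'$ (using $e_N(\alpha_p(f^*)) = 1$, which is the normalization of the Hecke polynomial in \eqref{eq_alpha}), one obtains
\begin{equation*}
F_\ell(s) = L_p(f^*, s)\cdot Q_\ell(p^{-s}),\qquad
Q_\ell(T) := \sum_{j=0}^{\ell}(-T)^j\, e_{N-\ell+j}(\alpha_p(f^*)).
\end{equation*}

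Holomorphicity on $\Re(s) \ge 1/2 + \epsilon$ is now immediate: $L_p(f^*,s)$ is a finite Euler product whose poles have real part $\le \theta = 1/2 - 1/(N^2+1)$ by \eqref{eq_RP}, while $Q_\ell(p^{-s})$ is entire. For the size estimate on $\Re(s) = 1/2 + \epsilon$, the same bound \eqref{eq_RP} gives $|L_p(f^*,s)| = O_N(1)$. For each $|e_i(\alpha_p(f^*))|$, combining the trivial bound $\binom{N}{i} p^{i/2}$ with the identity $e_i(\alpha_p(f)) = \overline{e_{N-i}(\alpha_p(f))}$ (obtained from \eqref{eq_Fourier_coeffs} applied to the tuple $(1,\dots,1,p,1,\dots,1)$ with $p$ at position $i$ versus $N-i$) gives the sharper bound $|e_i(\alpha_p(f^*))| \ll_N p^{\min(i, N-i)/2}$. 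A short case analysis then shows $\min(N-\ell+j,\ell-j) - j \le \ell$ for all $0 \le j \le \ell$, so each term of $Q_\ell(p^{-s})$ is $O(p^{\ell/2 - j\epsilon})$ and $|Q_\ell(p^{-s})| \ll p^{\ell/2}$. Combining, $|F_\ell(s)| \ll p^{\ell/2} \le p^{\ell/2 + \epsilon}$, as required.

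The main obstacle is the closed-form identity for $F_\ell$: the Hecke recursion by itself only produces a two-term relation, and one has to patiently unwind the descending induction to recognize the truncation of $\prod_j(1 - \alpha_{j,p}(f^*) T)$ that emerges. Once this identity is in hand, the estimate relies on the somewhat unobvious combination of Jacquet--Shalika with the Fourier-coefficient symmetry \eqref{eq_Fourier_coeffs} to improve the trivial bound on $|e_i(\alpha_p(f^*))|$; without this improvement, the naive bound $p^{(N-\ell)/2 + O(\epsilon)}$ obtained by estimating each $e_i$ crudely would be insufficient for small $\ell$.
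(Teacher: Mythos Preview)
Your proof is correct and follows essentially the same route as the paper: both use the Hecke relation \eqref{eq_multiplicative} to set up a recursion linking $F_\ell$ to $F_{\ell-1}$ and to $L_p(f^*,s)$, then bound using the Jacquet--Shalika estimate on the Satake parameters. The paper runs the induction on $\ell$ directly, obtaining $F_\ell(s)=e_\ell(\alpha_p(f))\,L_p(f^*,s)-p^{-s}F_{\ell-1}(s)$ and bounding $|e_\ell(\alpha_p(f))|\ll p^{\ell/2}$ at each step, whereas you unwind the same recursion to the closed form $F_\ell=L_p(f^*,s)\,Q_\ell(p^{-s})$.

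One remark: the ``symmetry trick'' $|e_i|=|e_{N-i}|$ you highlight is not an additional ingredient but an artifact of having passed to the $f^*$ side. Since $e_{N-\ell+j}(\alpha_p(f^*))=\overline{e_{\ell-j}(\alpha_p(f))}$ (this is \eqref{eq_Fourier_dual} combined with \eqref{eq_alpha_sym}), your polynomial is simply $Q_\ell(T)=\sum_{j=0}^\ell(-T)^je_{\ell-j}(\alpha_p(f))$, and Jacquet--Shalika alone gives $|e_{\ell-j}(\alpha_p(f))|\ll p^{(\ell-j)/2}$ directly, so the $j$-th term is $\ll p^{(\ell-j)/2-j/2-j\epsilon}=p^{\ell/2-j(1+\epsilon)}$ without any case analysis. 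This is exactly the paper's bound, just summed rather than inducted.
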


\begin{proof}[\proofname{} of proposition \ref{lemma_Dirichlet}]%
  We prove this lemma by induction on $\ell$. If $\ell=1$ then
\begin{equation*}
F_1(s)=\left(a_f(p,1,\dots,1)-\frac{1}{p^s}\right)L_p(f^\ast,s)
\end{equation*}
since
\begin{equation*}
a_f(1,\dots,1,p^k)a_f(p,1,\dots,1)=a_f(p,1,\dots,1,p^k)+a_f(1,\dots,1,p^{k-1})
\end{equation*}
by \eqref{eq_multiplicative} for all positive integer $k$. The result
follows from the Jacquet-Shalika bound
$$
|\alpha_{j,q}(f)|\leq q^{1/2}
$$
and \eqref{eq_alpha_sym}.
\par
If $2\leq\ell\leq N-2$ then
\begin{equation*}
  F_\ell(s)=a_f(\overbrace{1,\dots,1}^{\ell -1},p,1,\dots,1)
    L_p(f^\ast,s)+\frac{1}{p^s}F_{\ell-1}(s)
\end{equation*}
since
\begin{multline*}
  a_f(1,\dots,1,p^k)a_f(\overbrace{1,\dots,1}^{\ell-1},p,1,\dots,1)=
  a_f(\overbrace{1,\dots,1}^{\ell-1},p,1,\dots,1,p^k) \\
  +a_f(\overbrace{1,\dots,1}^{\ell-1},p,1,\dots,1,p^{k-1})
\end{multline*}
for all positive integers $k$ by \eqref{eq_multiplicative}. Once
again, the result follows from the Jacquet-Shalika bound and
\eqref{eq_alpha_sym}.
\end{proof}

\section{Asymptotic expansion of the mixed moments}\label{sec_7}%

This long section is the heart of the paper, since we will prove
Theorem~\ref{theo_moments}. Before we begin the proof, we explain how
the computation can be understood in probabilistic terms, in analogy
with Lindeberg's proof of the usual Central Limit Theorem for
triangular arrays of random variables using the method of moments.

\subsection{Notation}

We now come back to Theorem \ref{theo_moments}, and begin by recalling
and fixing some notation. Thus $f$ is a fixed cusp form of level $1$
on $GL(N)$, and $w$ is a compactly supported smooth test function. We
denote
$$
\uple{\alpha}=\alpha_{\infty}(f),\quad\quad
\uple{\alpha}^{\ast}=\alpha_{\infty}(f^{\ast})=
\overline{\uple{\alpha}}
$$
(by~(\ref{eq_unitaricity_2}), and
$$
\bfn(x)=\mathcal{B}_{\uple{\alpha}}[w](x),\quad\quad
\bfns(x)=\mathcal{B}_{\uple{\alpha}^{\ast}}[w](x).
$$
\par
We consider the mixed moment
$\mathsf{M}=\mathsf{M}_f(X,p,(\kappa,\lambda))$ for fixed non-negative
integers $\kappa$ and $\lambda$ and an odd prime $p$.
\par
The following notation will also be used througout this section.  We
will denote $\nu=\kappa+\lambda$ and $P=(p-1)/2$. By
$\uple{m}=(m_1\dots,m_{\nu})$, we will always denote a $\nu$-tuple of
non-zero integers, by $\uple{j}=(j_1,\dots,j_{\nu})$ a $\nu$-tuple of
integers in $\{1,\dots,P\}$, and by $\uple{e}=(e_1\dots,e_{\nu})$ a
$\nu$-tuple of elements in $\{\pm 1\}$.

\subsection{Probabilistic analogy}

\newcommand{\cE}{\mathcal{E}}
\newcommand{\ctE}{\widetilde{\mathcal{E}}}

For simplicity, we denote by $E_p$ the random variable $a\mapsto
E_f(X;p,a)$. We can then interpret the Vorono\u{\i} summation formula
as giving an approximate decomposition
$$
E_p=\sum_{m\in\Z^{\ast}}T_{p,m}+O(p^{-1/2+\epsilon})
$$
for any $\epsilon>0$, where $T_{p,m}$ is also viewed as a random
variable given by
$$
T_{p,m}=\frac{\epsilon_f}{\sqrt{p^N/X}}a_{f^{\ast}}(m)
\bfn\left(\frac{m}{p^N/X}\right) K_{p,m}
$$
with $K_{p,m}(a)=K_N(-am;p)$. It is easy to restrict the sum to $1\leq
|m|<p/2$ (using \eqref{eq_bound_K3} and Corollary \ref{coro_useful}),
getting a random  variable
$$
\cE_p=\sum_{1\leq |m|<p/2}T_{p,m}.
$$
\par
Now our computations can be interpreted as comparing the moments of
$\cE_p$ with those of
$$
\ctE_p=\sum_{1\leq |m|<p/2}\widetilde{T}_{p,m}
$$
where
$$
\tilde{T}_{p,m}(a)=\frac{\epsilon_f}{\sqrt{p^N/X}}a_{f^{\ast}}(m)
\bfn\left(\frac{m_{k}}{p^N/X}\right)Z_{p,m},
$$
where the $Z_{p,m}$ are, for a given $p$, random variables (defined on
a different probability space) of the form
$$
Z_{p,m}=\mathrm{Tr}(\Theta_{p,m}),
$$
where $(\Theta_{p,m})_{m\in\mathbb{F}_p^{\times}}$ are
Haar-distributed random variables on
$$
G_N=\begin{cases}
  \mathrm{USp}_{N}(\C)&\text{ if $N$ is even},\\
  \mathrm{SU}_N(\C)&\text{ if $N$ is odd},
\end{cases}
$$
and where we assume:
\begin{itemize}
\item if $N$ is even, that the
  $(\Theta_{p,m})_{\in\mathbb{F}_p^{\times}}$ are independent;
\item if $N$ is odd, that the variables $(\Theta_{p,m})_{1\leq m<p/2}$
  are independent, and furthermore
$$
\Theta_{p,-m}={}^t\Theta_{p,m}^{-1}
$$
for all $m$.
\end{itemize}
\par
Indeed, one may interpret Theorem~\ref{theo_katz} as expressing the
fact that
\newcommand{\expect}{\mathbb{E}}
$$
\expect\Bigl(\prod_{i=1}^{\nu}K_{p,m_i}\Bigr)
=\expect\Bigl(\prod_{i=1}^{\nu}Z_{p,m_i}\Bigr)
\Bigl(1+O(p^{-1/2}\Bigr)
$$
for all $\nu$-tuples $\uple{m}$ of integers with $1\leq |m_i|<p/2$
(where $\expect(\cdot)$ denotes expectation on the relevant
probability space).
\par
Using this, it is not too difficult to prove Corollary~\ref{coro_law}
by exploiting the fact that the corresponding central limit theorem
holds for $\ctE_p$ as $p$ tends to infinity, with $X=p^N/\Phi(p)$ as
in that corollary.  In turn, this probabilistic statement follows
easily from the Lindeberg-Feller Theorem for triangular arrays with
independent rows (see, e.g.,~\cite[Th. 27.2, \S 30]{MR1324786}), after
taking into account the relation $Z_{p,-m}=\overline{Z_{p,m}}$ if $N$
is odd.
\par
However, proceeding in this manner, even if it leads to an elegant
proof of the Central Limit Theorem, would not give the more precise
asymptotic of fixed moments in Theorem~\ref{theo_moments}, valid (for
given $\kappa$ and $\lambda$) in a wider range of $p$ and $X$ (at
least, we are not aware of suitable probabilistic references that
would give such a result). We therefore implement the idea by
computing explicitly the asymptotic behavior of the moments.  The
reasoning above is however a good motivation and check that the
combinatorial extraction of the main terms is done correctly.

\subsection{Initial cleaning}%

We begin by assuming that $\kappa, \lambda\geq 1$, since the remaining
cases are easier. We also denote $Y=p^N/X$ to lighten the notation (in
the setting of the Central Limit Theorem of Corollary~\ref{coro_law},
this is $Y=\Phi(p)$, which the reader should therefore think as a
quantity that grows rather slowly with $p$). We assume throughout that
$Y<p/2$, which corresponds to the assumption $2p^{N-1}<X$ in
Theorem~\ref{theo_moments}.
\par
By \eqref{eq_Fourier_coeffs}, we have
\begin{equation*}
\overline{E_f(X,p,a)}=E_{f^\ast}(X,p,a)
\end{equation*}
for all integers $a$ coprime with $p$. Thus, applying Proposition
\ref{propo_voronoi} to $E_f(X,p,a)$ and its conjugate, and then
expanding the $\kappa$-th (resp. $\lambda$-th) power, we obtain the
expression 
\begin{multline*}
  \mathsf{M}=\left(\frac{\epsilon_f}{\sqrt{Y}}\right)^{\nu}\frac{1}{p}\sums_{a\bmod{p}}\
  \sum_{\uple{m}\in\left(\Z^\ast\right)^{\nu}} 
\prod_{k=1}^\kappa \afs(m_{k})K_N(am_{k},p)
  \bfn\left(\frac{m_{k}}{Y}\right) \\
  \times\prod_{\ell=\kappa+1}^{\nu}\af(m_{\ell})
  K_N(am_{\ell},p)\bfns\left(\frac{m_{\ell}}{Y}\right) 
  +O_{\epsilon,
    f}\left(\frac{p^\epsilon}{\sqrt{p}}Y^{(\nu-1)/2}\right),
\end{multline*}
where the sum over $a$ is restricted to $a$ coprime to $p$ (note that
we made a change of variable $a\mapsto -a$, and that we used the
fact~(\ref{eq-afst}) that
$$
\overline{\afs(m)}=
\overline{a_f(1,\ldots,1,m)}=a_f(m,1,\ldots,1)
$$
in the expansion of the conjugates). 
\par
We then split this expression into 
\begin{equation*}
  \mathsf{M}=\Sigma_1+\Sigma_2+O_{\epsilon,
    f}\left(\frac{p^\epsilon}{\sqrt{p}}
Y^{(\nu-1)/2}\right)
\end{equation*}
where $\Sigma_1$ is the contribution of the $\nu$-tuples $\uple{m}$
where $|m_k|<p/2$ for all $k$, and $\Sigma_2$ is the remaining
contribution.
\par
By \eqref{eq_bound_K3} and Corollary \ref{coro_useful}, we easily
estimate $\Sigma_2$ as follows: we have
\begin{align}
  \Sigma_2 & \ll\frac{1}{Y^{\frac{\nu}{2}}}\left(\sum_{\abs{m}\geq p/2}\abs{\afs(m)}\left\vert\bfn\left(\frac{m}{Y}\right)\right\vert\right) 
\left(\sum_{\abs{m}\geq 1}\abs{\afs(m)}\left\vert\bfn
      \left(\frac{m}{Y}\right)\right\vert\right)^{\nu-1} \\
  & \ll\frac{1}{Y^{\frac{\nu}{2}}}p^{1+\epsilon}\left(\frac{Y}{p}\right)^{A}Y^{(1+\epsilon)(\nu-1)} \\
  & \ll
  p^{1+\epsilon}\left(\frac{p^{N-1}}{X}\right)^{A}Y^{\nu/2-1+\epsilon} \label{eq_bound_sigma2}
\end{align}
for all $A>1$ and if $2p^{N-1}<X$.
\par
Thus, the core of the proof is to determine the asympotic behaviour of
$\Sigma_1$. In order to rearrange conveniently this expression, we
first normalize the tuples $\uple{m}$ that remain in $\Sigma_1$.
\par
Each component of the $\nu$-tuple $\uple{m}$ ranges over a finite set
of representatives of the invertible residues classes modulo the odd
prime number $p$, namely
\begin{equation*}
\left\{(1-p)/2,\dots,-1,+1,\dots,(p-1)/2\right\}.
\end{equation*}
\par
We can uniquely write
$$
\uple{m}=(e_1j_1,\dots,e_{\nu}j_{\nu}),
$$
where the components of the $\nu$-tuple
$\uple{e}=(e_1,\dots,e_{\kappa+\lambda})$ belong to $\{\pm 1\}$ and
those of the $\nu$-tuple $\uple{j}=(j_1,\dots,j_{\kappa+\lambda})$
belong to the subset $R=\{1,\dots,P\}$. 
\par
Using this parameterization, we get
\begin{multline}\label{eq_before_combi}
\Sigma_1=\left(\frac{\epsilon_f}{\sqrt{Y}}\right)^{\nu}\frac{1}{p}
\sums_{a\bmod{p}}
\sum_{\uple{e}\in\{\pm 1\}^{\nu}}
\sum_{\uple{j}\in R^{\nu}}
\prod_{k=1}^{\kappa}g_k(j_k)K_N(ae_kj_k,p) 
\prod_{\ell=\kappa+1}^{\nu}g^\ast_\ell(j_{\ell})K_N(ae_\ell j_\ell,p)
\end{multline}
where we have defined
$$
  g_k(m)  \coloneqq  \afs(m)\bfn\left(\frac{e_km}{Y}\right)
  \quad\quad
  g^\ast_\ell(m)  \coloneqq  \af(m)\bfns\left(\frac{e_\ell
      m}{Y}\right)
$$
for integers $m$ and for $1\leq k\leq\kappa$ and
$\kappa+1\leq\ell\leq\nu$.

\subsection{Combinatorial rearranging}%
\label{ssec-reduce-katz}

If we exchange the order of summation in our last expression for
$\Sigma_1$ in order to sum over $a$ first, we encounter sums which are
very close to those of Section~\ref{sec-katz}, but which differ
because there is no provision for the factors $e_kj_k$ or
$e_{\ell}j_{\ell}$ to be distinct, or distinct modulo $\pm 1$, as
required to apply Theorem~\ref{theo_katz}.
\par
We therefore rearrange the sums via a combinatorial rearrangement.
Assume that $s$ and $t$ are two positive integers with $s\leq t$.  We
denote by $P(t,s)$ the set of \emph{surjective} functions
\begin{equation*}
\sigma:\{1,\dots,t\}\to\{1,\dots,s\}
\end{equation*} 
which satisfy the conditions
\begin{equation}\label{eq_perm}
  \forall j\in\{1,\dots,t\},\quad\sigma(j)=1\;\;\; \text{ or } \;\;\;\exists k<j,\quad\sigma(j)=\sigma(k)+1.
\end{equation}
\par
These conditions ensure that $P(s,t)$ parameterizes \emph{bijectively}
the partitions of a set of $t$ elements into $s$ nonempty subsets,
namely into the pre-images $\sigma^{-1}(j)$ for $1\leq j\leq s$. 
\par
In particular, by a formal rearranging, we obtain the following lemma
(see \cite[Lemma 7.3]{HuRu}):

\begin{lemma}\label{lemma_combi}
  Let $t\geq 1$ be a positive integer. If $f\,:\, V^t\rightarrow \C$
  is any function, where $V$ is a finite set, then we have
\begin{equation*}
  \sum_{\uple{j}\in V^t}f(j_1,\ldots,j_t)
  =\sum_{s=1}^t\sum_{\sigma\in P(t,s)}
  \sum_{\substack{(j_1,\dots,j_s)\in V^s\\\text{
        distinct}}}f(j_{\sigma(1)},
\dots,j_{\sigma(t)}).
\end{equation*}
\end{lemma}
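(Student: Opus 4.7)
The plan is to exhibit an explicit bijection between $V^t$ and the disjoint union
\[
\bigsqcup_{s=1}^{t}\ P(t,s)\times \bigl\{(j_1,\dots,j_s)\in V^s\ :\ j_1,\dots,j_s\text{ distinct}\bigr\}
\]
under which the value of $f$ on a tuple $(j_{\sigma(1)},\dots,j_{\sigma(t)})$ corresponds to $f(\uple{j})$. This will reduce the identity to a reindexing of a finite sum.

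The bijection $\Phi$ is constructed as follows. Given $\uple{j}=(j_1,\dots,j_t)\in V^t$, let $s$ be the number of distinct values appearing among the components, and let $j'_1,\dots,j'_s\in V$ be these distinct values listed in the order of their first appearance in $\uple{j}$. Define $\sigma:\{1,\dots,t\}\to\{1,\dots,s\}$ by setting $\sigma(k)$ to be the unique index $i\in\{1,\dots,s\}$ with $j_k=j'_i$. Then set $\Phi(\uple{j})=(s,\sigma,(j'_1,\dots,j'_s))$. The conversely-defined map sends a triple $(s,\sigma,(j_1,\dots,j_s))$ (with $\sigma\in P(t,s)$ and the $j_i$ distinct) to the tuple $(j_{\sigma(1)},\dots,j_{\sigma(t)})$.

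The key verification is that $\sigma\in P(t,s)$, i.e.\ satisfies \eqref{eq_perm}: since $j'_1$ is, by construction, equal to $j_1$, we get $\sigma(1)=1$; and if $k$ is the first position at which the value $j'_i$ (for $i\geq 2$) appears, then $j'_{i-1}$ has already appeared at some earlier position $k'<k$, so $\sigma(k)=i=\sigma(k')+1$. Surjectivity is immediate from the definition of $s$. For the inverse map, one checks that if $\sigma$ satisfies \eqref{eq_perm} and the $j_i$ are distinct, then the distinct values of the tuple $(j_{\sigma(1)},\dots,j_{\sigma(t)})$ in the order of their first appearance are exactly $j_1,\dots,j_s$, so the two constructions are mutually inverse. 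The identity of the lemma then follows by organizing the sum over $V^t$ according to the fibers of $\Phi$, i.e.\ by first fixing $s$ and $\sigma\in P(t,s)$ and then summing over the distinct tuples $(j_1,\dots,j_s)$.

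The only real ``obstacle'' is the bookkeeping verification that condition \eqref{eq_perm} is the correct set-theoretic encoding of ``partitions whose blocks are labeled by order of first appearance'', but this is entirely elementary and the rest is a routine reindexing; there is no analytic content.
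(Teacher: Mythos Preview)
Your proof is correct and is essentially the fleshed-out version of what the paper indicates: the paper notes just before the lemma that the condition \eqref{eq_perm} ensures $P(t,s)$ parameterizes bijectively the partitions of $\{1,\dots,t\}$ into $s$ nonempty subsets, and then states the lemma as a ``formal rearranging'' with a reference to \cite{HuRu}. Your explicit bijection via order-of-first-appearance is exactly the standard way to realize this parameterization, so the two approaches coincide.
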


We will apply this to the sum over $\uple{j}\in R^{\nu}$ in the
formula~\eqref{eq_before_combi} for $\Sigma_1$. Doing so, we get
\begin{multline*}
  \Sigma_1=\left(\frac{\epsilon_f}{\sqrt{Y}}\right)^{\nu}
  \frac{1}{p}\sums_{a\bmod{p}} \sum_{\uple{e}\in\{\pm 1\}^{\nu}}
  \sum_{s=1}^{\nu} \sum_{\sigma\in P(\nu,s)}
  \sum_{\substack{(j_1,\dots,j_{s})\in R^s \\
      \text{distinct}}} \\
  \prod_{k=1}^{\kappa}g_k(j_{\sigma(k)}) K_N(ae_kj_{\sigma(k)},p)
  \prod_{\ell=\kappa+1}^{\nu} g^\ast_\ell(j_{\sigma(\ell)})K_N(ae_\ell
  j_{\sigma(\ell)},p).
\end{multline*}
\par
We can now collect terms in the products which are equal.  This must
be done while keeping track of the signs $\uple{e}$, and of the
distinction between the indices $j$ which range from $1$ to $\kappa$
and those which range from $\kappa+1$ to $\nu$, and hence a certain
amount of bookkeeping is required.
\par
For $1\leq s\leq \nu$, $\sigma\in P(\nu,s)$ and any
$u\in\{1,\dots,s\}$, we denote first 
\newcommand{\os}{\sigma_u}  
\begin{equation*}
  \os=\abs{\sigma^{-1}(u)},
\end{equation*}
so that, by definition, we have
\begin{equation}\label{eq_note_1}
  \os\geq 1 \quad \text{ and } \quad \sum_{u=1}^s\os=\nu.
\end{equation}
\par
We next count the pre-images of $u$ according to which of the two
intervals they belong: for $1\leq u\leq s$, we let
\newcommand{\oso}{\beta_u} 
\newcommand{\ost}{\gamma_u} 
\begin{align*}
  \oso = & \abs{\{1\leq k\leq\kappa, \sigma(k)=u\}}\geq 0, \\
  \ost = & \abs{\{\kappa+1\leq \ell\leq\kappa+\lambda,
    \sigma(\ell)=u\}}\geq 0,
\end{align*}
noting that these depend on $\sigma$. Hence, we have
\begin{equation}\label{eq_note_2}
\oso+\ost=\os\geq 1.
\end{equation}
\par
Finally, we count the preimages $j$ with a given sign $e_j$, both
their total number, and the number in the two subintervals. For $1\leq
u\leq s$, for $\epsilon=\pm 1$ and $\uple{e}\in\{\pm 1\}^{\nu}$, we
let \newcommand{\osee}{\sigma_u^{\epsilon}(\uple{e})}
\newcommand{\ose}[1]{\sigma_u^{{#1}}(\uple{e})}
\newcommand{\osoee}{\beta_u^{\epsilon}(\uple{e})}
\newcommand{\osoe}[1]{\beta_u^{{#1}}(\uple{e})}
\newcommand{\ostee}{\gamma_u^{\epsilon}(\uple{e})}
\newcommand{\oste}[1]{\gamma_u^{{#1}}(\uple{e})}
\begin{align}
  \osee & = \abs{\{1\leq a\leq\nu, \sigma(a)=u, e_a=\epsilon\}}\geq 0, \\
  \osoee & = \abs{\{1\leq k\leq\kappa, \sigma(k)=u,
    e_k=\epsilon\}}\geq 0, 
\label{eq-beta}\\
  \ostee & =\abs{\{\kappa+1\leq \ell\leq\nu, \sigma(\ell)=u,
    e_\ell=\epsilon\}}\geq 0.
\label{eq-gamma}
\end{align}
\par
These non-negative integers satisfy the following set of properties.
\begin{gather}\label{eq_note_2_2}
  \osoee+\ostee=\osee\geq 0,
\\
\label{eq_note_2_3}
\osoe{1}+\osoe{-1}=\oso\geq 0,\quad\quad
\oste{1}+\oste{-1}=\ost\geq 0,
\\
\label{eq_note_2_4}
\ose{1}+\ose{-1}=\os\geq 1,
\end{gather}
for $1\leq u\leq s$, $\epsilon=\pm 1$, $\uple{e}\in\{\pm 1\}^{\nu}$.
\par 
In terms of these data, by appealing to Lemma \ref{lemma_easy_Bessel}
and the definition~(\ref{eq-sum-s}), we can collect terms in order to
express $\Sigma_1$ in the form
\begin{multline}\label{eq_sigma_1_last}
  \Sigma_1=\left(\frac{\epsilon_f}{\sqrt{Y}}\right)^{\nu}
  \sum_{\uple{e}\in\{\pm 1\}^{\nu}}\sum_{s=1}^{\nu} \sum_{\sigma\in
    P(\nu,s)} \sum_{\substack{(j_1,\dots,j_{s})\in
      R^s\\\text{distinct}}} \prod_{u=1}^{s}
  \afs(j_u)^{\oso}
  \af(j_u)^{\ost} \\
  \times\prod_{u=1}^{s}
  \bfn\left(\frac{j_u}{Y}\right)^{\osoe{1}}
  \bfn\left(\frac{-j_u}{Y}\right)^{\osoe{-1}}
  \\  \times\prod_{u=1}^{s}
  \bfns\left(\frac{j_u}{Y}\right)^{\oste{1}}
  \bfns\left(\frac{-j_u}{Y}\right)^{\oste{-1}}
S_{\sigma^{-1}(\uple{e}),\sigma^{1}(\uple{e})}^{(N)}(\uple{j};p),
\end{multline}
where $\uple{j}=(j_1,\ldots, j_s)$, and we have defined the tuples
$\sigma^1(\uple{u})$ and $\sigma^{-1}(\uple{e})$ in the sum of
Kloosterman sums by
$$
\sigma^1(\uple{e})=(\ose{1})_{1\leq u\leq s},\quad\quad
\sigma^{-1}(\uple{e})=(\ose{-1})_{1\leq u\leq s}.
$$
\par
We note that the parameters $j_u$ which now appear in this last sum
are not only distinct, but also distinct modulo $\{\pm 1\}$ in
$\mathbb{F}_p^{\ast}$. In particular, we can now apply
Theorem~\ref{theo_katz}. This requires us to distinguish between the
cases of odd $N$ and even $N$.


\subsection{The combinatorial analysis for $N$
  odd}\label{sec_combi_odd}%

In this entire section, $N$ is \textbf{odd}. We recall that, in this
case, we have $\epsilon_f=1$. From \eqref{eq_sigma_1_last}, after
applying 
Theorem \ref{theo_katz} (to estimate the sums
$S_{\sigma^{-1}(\uple{e}),\sigma^{1}(\uple{e})}^{(N)}(\uple{j};p)$)
and Proposition \ref{lemma_multiplicity} (to isolate the main terms),
and Lemma~\ref{lemma_easy_Bessel} (to clean-up the weight functions),
one gets
\begin{multline}\label{eq_after_Katz}
  \Sigma_1= \sum_{\uple{e}\in\{\pm
    1\}^{\nu}}\sum_{s=1}^{\nu}\sum_{\sigma\in
    P(\nu,s)}\Sigma_1(\sigma,\uple{e})
  +O\left(\frac{1}{\sqrt{p}}\frac{1}{Y^{\nu/2}}\left(\sum_{1\leq\abs{m}<p/2}\left\vert
        \afs(m)
      \right\vert\left\vert\bfn\left(\frac{m}{Y}\right)\right\vert\right)^{\nu}\right)
\end{multline}
where
\begin{multline*}
  \Sigma_1(\sigma,\uple{e})\coloneqq\frac{1}{Y^{\frac{\nu}{2}}}\sum_{\substack{(j_1,\dots,j_{s})\in
      R^{s} \\
      \text{distinct}}}\prod_{\substack{1\leq u\leq s \\
      N\mid \ose{1}-\ose{-1}}}\afs(j_u)^{\oso} \af(j_u)^{\ost}\\
  \bfn\left(\frac{j_u}{Y}\right)^{\osoe{1}}\overline{\bfn\left(\frac{j_u}{Y}\right)}^{\oste{-1}} 
  \bfns\left(\frac{j_u}{Y}\right)^{\oste{1}}\overline{\bfns\left(\frac{j_u}{Y}\right)}^{\osoe{-1}}A^{(N)}_{\ose{1},\ose{-1}}
\end{multline*}
(the integer $A^{(N)}_{\ose{1},\ose{-1}}$ being defined in
Theorem~\ref{theo_katz}).
\par
Note that, according to Corollary \ref{coro_useful}, the error term in \eqref{eq_after_Katz} satisfies
\begin{equation*}
  \frac{1}{\sqrt{p}}\frac{1}{Y^{\nu/2}}\left(\sum_{1\leq\abs{m}<p/2}\left\vert
      \afs(m)
    \right\vert\left\vert\bfn\left(\frac{m}{Y}\right)\right\vert\right)^{\nu}
  \ll\frac{1}{\sqrt{p}}Y^{\nu/2+\epsilon}
\end{equation*}
for all $\epsilon>0$ if $2p^{N-1}<X$.
\par
Our next step is to show that the main term in $\Sigma_1$ arises from
the contribution of the terms $\Sigma_1(\sigma,\uple{e})$, where
$\sigma$ is in $P(\nu,s)$ for some $s$ with $1\leq s\leq \nu$ and
$\uple{e}$ is in $\{\pm 1\}^{\nu}$, and they satisfy
\begin{equation}\label{eq_step_1}
  \ose{-1}=\ose{1}=1
\end{equation}
for all $u\in\{1,\dots,s\}$. We call such data $(\sigma,\uple{e})$ \textbf{resonant}.
\par
First of all, if the condition $N\mid \ose{1}-\ose{-1}$ in the product
over $u$ in the sum $\Sigma_1(\sigma,\uple{e})$ is not satisfied for
one $u$ at least, then  Corollary \ref{coro_useful} gives immediately
\begin{equation*}
  \Sigma_1(\sigma,\uple{e})\ll_{\epsilon, f}\frac{1}{Y^{1/2-\epsilon}}.
\end{equation*}
\par
Next, we claim that if $(\sigma,\uple{e})$ is non-resonant, with
$\sigma\in P(\nu,s)$ and $\uple{e}\in \{\pm 1\}^{\nu}$ such that 
$$
N\mid \ose{1}-\ose{-1}
$$
for all $u$, then
\begin{equation}\label{eq-non-resonant}
s\leq \frac{\nu-1}{2}.
\end{equation}
\par
Indeed, note first that if $u$ satisfies~(\ref{eq_step_1}), then we
have $\os=2$ by~(\ref{eq_note_2_4}). On the other hand, if $u$ does
not satisfy~(\ref{eq_step_1}), then in view of the conditions $N\mid
\ose{1}-\ose{-1}$ and
$$
\ose{1}+\ose{-1}=\os\geq 1,
$$
we see that either $\ose{-1}=\ose{1}\geq 2$, and then $\os\geq 4$, or
$\ose{1}+\ose{-1}\geq N\geq 3$. Thus, in all cases, we have
$$
\os\geq 3
$$
unless $u$ satisfies~(\ref{eq_step_1}). Denoting by $U$ the set of
those $u$ which do satisfy~(\ref{eq_step_1}), we note that if $\sigma$
is not resonant, we have $|U|<s$, and hence
$$
3(s-|U|)\geq 2(s-|U|)+1,
$$
and we obtain
$$
\nu=\sum_{1\leq u\leq s}{\os}=
\sum_{u\in U}{\os}+\sum_{u\notin U}{\os}
\geq 2|U|+3(s-|U|)\geq 2|U|+2(s-|U|)+1\geq 2s+1,
$$
which gives~(\ref{eq-non-resonant}).
\par
Using Corollary \ref{coro_useful}, we see that
\begin{equation}\label{eq-non-resonant-bd}
\Sigma_1(\sigma,\uple{e})\ll_{\epsilon, f}\frac{1}{Y^{1/2-\epsilon}}
\end{equation}
for $(\sigma,\uple{e})$ non-resonant, provided $2p^{N-1}<X$.
\par
Observe that if $(\sigma,\uple{e})$ is resonant, then each $\os$ is
equal to $2$, hence $\nu=2s$ is even. We can therefore write
$$
\Sigma_1=\delta_{2\mid\nu}
\sum_{\uple{e}\in\{\pm 1\}^{\nu}}\sum_{\substack{\sigma\in P(\nu,\nu/2) \\
    \sigma\text{ resonant}}}\Sigma_1(\sigma,\uple{e}) +O_{\epsilon,
  f}\left(Y^{-1/2+\epsilon}+\frac{1}{\sqrt{p}}Y^{\nu/2+\epsilon}\right)
$$
if $2p^{N-1}<X$, and the corresponding $\Sigma_1(\sigma,\uple{e})$ are
given by
\begin{multline*}
  \Sigma_1(\sigma,\uple{e})=\frac{1}{Y^{\frac{\nu}{2}}}
  \sum_{\substack{(j_1,\dots,j_{\nu/2})\in R^{\nu/2} \\
      \text{distinct}}}
  \prod_{u=1}^{\nu/2}\afs(j_u)^{\oso}\af(j_u)^{\ost} \\
  \times\bfn\left(\frac{j_u}{Y}\right)^{\osoe{1}}\overline{\bfn\left(\frac{j_u}{Y}\right)}^{\oste{-1}}
  \bfns\left(\frac{j_u}{Y}\right)^{\oste{1}}\overline{\bfns\left(\frac{j_u}{Y}\right)}^{\osoe{-1}}
\end{multline*}
by the condition $A_{1,1}^{(N)}=1$ (see Proposition~\ref{lemma_multiplicity}).
\par
By \eqref{eq_step_1} and \eqref{eq_note_2_2}, for all $u$ with
$1\leq u\leq s=\nu/2$, the $4$-tuple
\newcommand{\bose}{\omega_u(\uple{e})}
\begin{equation}\label{eq-bose}
\bose=\left(\osoe{-1},\oste{-1},\osoe{1},\oste{1}\right)
\end{equation}
(which also depends on $\sigma$) is one of the four tuples in the set
$\omega=\{\omega_1,\omega_2,\omega_3,\omega_4\}$, where
\begin{gather}\label{eq-oms}
  \omega_1=(0,1,1,0),\quad\quad \omega_2=(1,0,0,1),\\
  \omega_3=(1,0,1,0),\quad\quad \omega_4=(0,1,0,1).
\end{gather}
\par
The sum $\Sigma_1(\sigma,\uple{e})$ almost factors as a product of
four independent terms. Indeed, if we sum over all $\uple{j}$,
relaxing the condition that $\uple{j}$ has distinct components, we
only introduce sums whose contributions is dominated by the error
terms already present. Hence we have
$$
  \Sigma_1=\delta_{2\mid\nu}
  \sum_{\uple{e}\in\{\pm 1\}^{\nu}}\sum_{\substack{\sigma\in P(\nu,\nu/2) \\
      \text{resonant}}}\tilde{\Sigma}_1(\sigma,\uple{e}) +O_{\epsilon,
    f}\left(Y^{-1/2+\epsilon}+\frac{1}{\sqrt{p}}Y^{\nu/2+\epsilon}\right)
$$
if $2p^{N-1}<X$, where 
\newcommand{\ube}[1]{u_{{#1}}(\uple{e})}
\begin{multline}\label{eq_before_variance}
  \tilde{\Sigma}_1(\sigma,\uple{e})=
  \left(\frac{1}{Y}\sum_{1\leq m<p/2}|\afs(m)|^2\left\vert\bfn\left(\frac{m}{Y}\right) \right\vert^2\right)^{\ube{1}} \\
  \times\left(\frac{1}{Y}\sum_{1\leq m<p/2} |\af(m)|^2
    \left\vert\bfns\left(\frac{m}{Y}\right) \right\vert^2\right)^{\ube{2}} \\
  \times\left(\frac{1}{Y}\sum_{1\leq m<p/2}\afs(m)^2
    \bfn\left(\frac{m}{Y}\right)
    \overline{\bfns\left(\frac{m}{Y}\right)}\right)^{\ube{3}} \\
  \times\left(\frac{1}{Y}\sum_{1\leq m<p/2}\af(m)^2
    \bfns\left(\frac{m}{Y}\right)\overline{\bfn\left(\frac{m}{Y}\right)}\right)^{\ube{4}},
\end{multline}
with exponents given by
\begin{equation*}
  \ube{b}=\left\vert\left\{1\leq u\leq \nu/2,\ 
     \bose=\omega_b\right\}\right\vert
\end{equation*}
for $1\leq b\leq 4$ (again, these depend on $\sigma$).
\par
The four terms in the product are of the type considered in
Section~\ref{sec_variance}, but note that they will actually look
different if $f$ is self-dual and when $f$ is not. So we split into
two cases again. We begin with the case when $f$ is not self-dual,
which we think of as the generic case. (When $N=3$, the $GL(3)$
self-dual cusp forms are the symmetric square lifts of $GL(2)$ forms,
as explained in~\cite{ramakrishnan}, and hence are very special;
similar characterizations of self-dual representations of $GL(N)$ for
all $N\geq 3$ are expected to hold, but are not known in full
generality).

\subsubsection{The non self-dual case for $N$ odd}\label{sec_variance_1}%

In this subsection, we assume that $f$ is \textbf{not self-dual},
namely $f^\ast\neq f$.
\par
We then show that the main term in $\Sigma_1$ in
\eqref{eq_before_variance} comes from the contribution of the resonant
$\tilde{\Sigma}_1(\sigma,\uple{e})$ where $(\sigma,\uple{e})$ is such
that
\begin{equation}\label{eq_step_2}
\bose=\omega_1\text{ or }
\bose=\omega_2
\end{equation}
for all $u\in\{1,\dots,\nu/2\}$, i.e., those where
$$
\ube{3}=\ube{4}=0,
$$
which we call the \textbf{focusing} pairs.
\par
Indeed, each of the four sums in~(\ref{eq_before_variance}) can be
estimated asymptotically using Proposition~\ref{propo_variance},
applied with $(Y,Z)=(Y,p/2)$ (recall that $p/2>Y$),
$\theta=1/2-1/(N^2+1)$ and suitable smooth functions $\mathrm{B}$, namely
$$
\mathrm{B}(y)=|\bfn(y)|^2,\quad
\mathrm{B}(y)=|\bfns(y)|^2,\quad
\mathrm{B}(y)=\bfn(y)\overline{\bfns(y)},\quad
\mathrm{B}(y)=\overline{\bfn(y)}\bfns(y)
$$
in the four successive terms. These satisfy the assumption of
Proposition~\ref{propo_variance} with
$$
\eta=2\max|\Re(\alpha_{j,\infty}(f))|\leq 1-\frac{2}{(N^2+1)}<1,
$$
by
Proposition~\ref{propo_bounds} and~(\ref{eq_Selberg}). 
\par
Proposition~\ref{propo_variance} leads to the estimate
\begin{equation*}
  \tilde{\Sigma}_1(\sigma,\uple{e})\ll
  Y^{(-1/2+\theta+\epsilon)(\ube{3}+\ube{4})},
\end{equation*}
and hence
$$
\tilde{\Sigma}_1(\sigma,\uple{e})\ll Y^{-1/2+\theta+\epsilon},
$$
unless $\ube{3}+\ube{4}=0$, i.e., unless $(\sigma,\uple{e})$ is
focusing, since $\ube{3}$ and $\ube{4}$ are non-negative integers.
\par
From Proposition \ref{propo_variance}, using the notation introduced
there, we now deduce that, for $X>2p^{N-1}$, we have
\begin{multline}\label{eq_generic}
\Sigma_1=\delta_{2\mid\nu}
\left(r_fH_{f,f^\ast}(1)\right)^{\nu/2}\sum_{\uple{e}\in\{\pm
  1\}^{\nu}}
\sum_{\substack{\sigma\in P(\nu,\nu/2) \\
\ube{3}+\ube{4}=0}} 
\Bigl(\mathcal{M}\left[\left\vert\bfn\right\vert^2\right](1)\Bigr)
^{\ube{1}}
\Bigl(\mathcal{M}\left[\left\vert\bfns\right\vert^2\right](1)\Bigr)
^{\ube{2})} \\
+O_{\epsilon, f}\left(Y^{-1/2+\theta+\epsilon}+Y^{-1/2+\epsilon}+\frac{1}{\sqrt{p}}Y^{\nu/2+\epsilon}\right)
\end{multline}
by \eqref{eq_before_unitary} (we also used the properties
$r_f=r_{f^\ast}$ and $H_{f,f^\ast}(1)=H_{f^\ast,f}(1)$.)
\par
The remaining set of focusing pairs $(\sigma,\uple{e})$ has now a very
clean structure. We state this as a lemma.

\begin{lemma}\label{lm-focusing-pairs}
  With notation as above, for any $(\sigma,\uple{e})$ which is a
  focusing pair,
  we have
  $\kappa=\lambda=\nu/2$. Furthermore, the map
$$
(\sigma,\uple{e})\mapsto (\tilde{\sigma},\tilde{\uple{e}}),
$$
where $\tilde{\sigma}$ is the restriction of $\sigma$ to
$\{\kappa+1,\ldots,\nu\}$ and $\tilde{\uple{e}}$ is the $\kappa$-tuple
$(e_u)_{1\leq u\leq \kappa}$, is a bijection between the set of
focusing pairs $(\sigma,\uple{e})$ and the product set
$S_{\kappa}\times \{\pm 1\}^{\kappa}$, where $S_{\kappa}$ is the set
of bijections from $\{\kappa+1,\ldots,\nu\}$ to $\{1,\ldots,\kappa\}$.
\par
We then have, for all such $(\sigma,\uple{e})$, the relation
\begin{equation}\label{eq-ube}
\ube{1}=|\{u\ ,\  1\leq u\leq \kappa\text{ and } \tilde{e}_u=1\}|,
\end{equation}
and $\ube{2}=\kappa-\ube{1}$. 
\end{lemma}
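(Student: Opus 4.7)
The plan is to unwind the definitions and then apply one rigidity consequence of condition \eqref{eq_perm}. First, I would observe that for a resonant pair the constraints $\ose{1}=\ose{-1}=1$ combined with \eqref{eq_note_2_4} force $\os=2$ for every $u$, so every fiber $\sigma^{-1}(u)$ is a pair of indices $\{a_u,b_u\}$ whose signs $(e_{a_u},e_{b_u})$ are exactly $\{+1,-1\}$. The tuple $\bose$ then records which of the two intervals $\{1,\dots,\kappa\}$ and $\{\kappa+1,\dots,\nu\}$ contains $a_u$ and which contains $b_u$, yielding exactly the four possibilities $\omega_1,\dots,\omega_4$ listed in \eqref{eq-oms}. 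Since a focusing pair has $\ube{3}=\ube{4}=0$ by definition, none of its fibers sits entirely in one of the two intervals: each fiber has one element in $\{1,\dots,\kappa\}$ and one in $\{\kappa+1,\dots,\nu\}$. Summing across the $s=\nu/2$ fibers therefore forces $\kappa=\nu/2=\lambda$.

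Next, I would use \eqref{eq_perm} to normalize the labeling of the fibers. Because each fiber meets $\{1,\dots,\kappa\}$ exactly once, the restriction $\sigma|_{\{1,\dots,\kappa\}}$ is a bijection onto $\{1,\dots,\kappa\}$; together with \eqref{eq_perm}, a short induction on $j$ starting from the forced value $\sigma(1)=1$ yields $\sigma(j)=j$ for all $1\leq j\leq\kappa$. Hence the element of $\sigma^{-1}(u)$ lying in the first interval is $u$ itself, and its companion in the second interval is $\tilde\sigma^{-1}(u)$. A focusing pair is therefore entirely described by the bijection $\tilde\sigma\in S_\kappa$ together with the signs $\tilde{\uple{e}}=(e_1,\dots,e_\kappa)$ on the first segment, the signs on the second segment being forced to be opposite by resonance. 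Conversely, any $(\tilde\sigma,\tilde{\uple{e}})\in S_\kappa\times\{\pm 1\}^\kappa$ canonically reconstructs a focusing pair by setting $\sigma|_{\{1,\dots,\kappa\}}=\mathrm{id}$, $\sigma(\tilde\sigma^{-1}(u))=u$ for $u\in\{1,\dots,\kappa\}$, and $e_{\tilde\sigma^{-1}(u)}=-\tilde e_u$; this delivers the stated bijection.

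Finally, \eqref{eq-ube} is read off this description: for each $u\in\{1,\dots,\kappa\}$, the element $u$ itself carries sign $\tilde e_u$ while its companion in $\{\kappa+1,\dots,\nu\}$ carries sign $-\tilde e_u$, so comparing with $\omega_1=(0,1,1,0)$ and $\omega_2=(1,0,0,1)$ one finds $\bose=\omega_1$ precisely when $\tilde e_u=+1$, and $\bose=\omega_2$ otherwise. Counting indices $u$ then gives the asserted expression for $\ube{1}$, and the identity $\ube{2}=\kappa-\ube{1}$ follows since every fiber is of type $\omega_1$ or $\omega_2$. The only mildly non-routine step in the whole argument is the induction that pins down $\sigma|_{\{1,\dots,\kappa\}}=\mathrm{id}$; everything else is bookkeeping built on top of that canonical normalization.
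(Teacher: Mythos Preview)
Your proof is correct and follows essentially the same route as the paper: you use the resonance/focusing constraints to see that each fiber splits one-one across the two intervals, invoke condition \eqref{eq_perm} to force $\sigma|_{\{1,\dots,\kappa\}}=\mathrm{id}$, and then read off the bijection and \eqref{eq-ube}. The only point you leave implicit that the paper spells out is checking that the reconstructed $\sigma$ in the surjectivity step actually lies in $P(\nu,\nu/2)$, i.e.\ satisfies \eqref{eq_perm}; this is routine (for $j>\kappa$ with $\sigma(j)=k\neq 1$, take $k-1<\kappa<j$ so that $\sigma(j)=\sigma(k-1)+1$), but worth stating.
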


\newcommand{\uote}{u_1(\tilde{\uple{e}})}
One important consequence of this lemma is that the exponents
$\ube{1}$ and $\ube{2}$ in~(\ref{eq_before_variance}) are
\emph{independent} of $\sigma$ when $\ube{3}=\ube{4}=0$. We will
denote $\ube{1}$ by $\uote$.

\begin{proof}
Using~(\ref{eq_note_2_3}) and the definition of $\omega_1$,
$\omega_2$, the focusing assumption $\ube{3}+\ube{4}=0$ imply that for all $u$
we have  
$$
\oso=\osoe{1}+\osoe{-1}=1,\quad\quad
\ost=\oste{1}+\oste{-1}=1.
$$
\par
By definition of $\beta_u$ (resp. $\gamma_u$), the first property
(resp. second) implies that the restriction of $\sigma$ to
$\{1,\ldots,\kappa\}$ (resp. to $\{\kappa+1,\ldots,\nu\}$) is
surjective (resp. surjective). In particular, $\kappa\geq \nu/2$ and
$\lambda\geq \nu/2$. Using $\nu=\kappa+\lambda$, this means that
$\kappa=\lambda=\nu/2$. 
\par
In turn, the two restrictions of $\sigma$ are then surjective maps
from and to sets of size $\nu/2$, and hence are bijections. In
particular, the map $\tilde{\sigma}$ is indeed an element of the set
$S_{\kappa}$, and $(\tilde{\sigma},\tilde{\uple{e}})$ belongs to
$S_{\kappa}\times \{\pm 1\}^{\kappa}$.
\par
We next show that the map is injective. Indeed, the condition
\eqref{eq_perm} imposed on all elements $\sigma$ of $P(\nu,\nu/2)$
implies, by an elementary induction, that the restriction of $\sigma$
to the initial segment $\{1\,\ldots,\kappa\}$ is the identity
map. Thus $\sigma$ is entirely determined by $\tilde{\sigma}$.
\par
The condition~(\ref{eq_step_2}) and the definition~(\ref{eq-bose}) of
$\bose$ and~(\ref{eq-oms}) of the permitted four-tuples $\omega_1$ and
$\omega_2$ imply that
\begin{equation}\label{eq-alles}
e_{\tilde{\sigma}^{-1}(u)}=-e_u
\end{equation}
for $\kappa+1\leq u\leq \nu$. Indeed, for all $u$ with $1\leq u\leq
\nu/2$, we have either
$$
\oste{-1}=\osoe{1}=1
$$
or
$$
\oste{-1}=\osoe{1}=0.
$$
\par
Assume the first holds: this means, by~(\ref{eq-beta})
and~(\ref{eq-gamma}), that (i) there exists a single $\ell$ with
$\kappa+1\leq \ell\leq \nu$, $\sigma(\ell)=u$, and $e_{\ell}=1$; (ii)
there exists a single $k$ with $1\leq k\leq \kappa$, $\sigma(k)=u$,
and $e_{k}=-1$. Since $\sigma$ is the identity on
$\{1,\ldots,\kappa\}$, we have $\sigma(k)=k=u$. Then, since
$\tilde{\sigma}$ is a bijection, we have
$\ell=\tilde{\sigma}^{-1}(u)=\tilde{\sigma}^{-1}(k)$, and hence
$e_{\tilde{\sigma}^{-1}(u)}=e_{\ell}=-1=-e_k=-e_u$. 
\par
The other case is similar, and we obtain~(\ref{eq-alles}) for all $u$.
Thus $\uple{e}$ is entirely determined by $\tilde{\uple{e}}$, and we
conclude that the map $(\sigma,\uple{e})\mapsto
(\tilde{\sigma},\tilde{\uple{e}})$ is injective.
\par
We now show that it is surjective. Given
$(\tilde{\sigma},\tilde{\uple{e}})\in S_{\kappa}\times\{\pm
1\}^{\kappa}$, we can define a surjective map $\sigma$ from
$\{1,\ldots,\nu\}$ to $\{1,\ldots,\nu/2\}$ by extending
$\tilde{\sigma}$ by the identity on $\{1,\ldots,\kappa\}$. Then we
define a $\nu$-tuple $\uple{e}$ using~(\ref{eq-alles}) and the
bijectivity of $\tilde{\sigma}$. It is clear that this pair
$(\sigma,\uple{e})$ is mapped to $(\tilde{\sigma},\tilde{\uple{e}})$,
but we must check that $\sigma$ satisfies~(\ref{eq_perm}) in order to
conclude. This condition is indeed true: for $1\leq j\leq \kappa$,
this is because $\sigma(j)=j$, which satisfies~(\ref{eq_perm}), and
for $\kappa+1\leq j\leq \nu$, this is because $k=\sigma(j)$ is between
$1$ and $\kappa$, and hence, if $k\not=1$, we have
$\sigma(j)=k=\sigma(k-1)+1$ with $k-1<\kappa<j$.
\par
Finally, we obtain~(\ref{eq-ube}) because $\ube{1}$ is the number of
$k$ with $1\leq k\leq \kappa$ where $e_k=1$ (since,
by~(\ref{eq-alles}), this is also the number of $\ell$ with
$\kappa+1\leq \ell\leq\nu$ for which $e_{\ell}=-1$). By assumption, we
have
$$
\nu/2=\ube{1}+\ube{2}+\ube{3}+\ube{4}=\ube{1}+\ube{2},
$$
hence the formula for $\ube{2}$.
\end{proof}

Using this lemma, the main term in \eqref{eq_generic} becomes
\begin{multline*}
  \delta_{\kappa=\lambda}\left(r_fH_{f,f^\ast}(1)\right)^{\kappa}
  \sum_{\tilde{\sigma}\in S_{\kappa}}\sum_{\tilde{\uple{e}}\in\{\pm
    1\}^\kappa}
  \left(\mathcal{M}\left[\left\vert\bfn\right\vert^2\right](1)\right)^{\uote}
  \left(\mathcal{M}
    \left[\left\vert\bfns\right\vert^2\right](1)\right)^{\kappa-\uote}
  \\
  =
  \delta_{\kappa=\lambda}\kappa!\left(r_fH_{f,f^\ast}(1)\left(\mathcal{M}\left[\left\vert\bfn\right\vert^2\right](1)+\mathcal{M}\left[\left\vert\bfns\right\vert^2\right](1)\right)\right)^{\kappa},
\end{multline*}
since, as we observed, $\uote$ is independent of $\tilde{\sigma}$ in
this double sum. Appealing finally to Corollary
\ref{coro_unitary_Bessel}, this expression is equal to
\begin{equation*}
  \delta_{\kappa=\lambda}\kappa!\left(r_f
    H_{f,f^\ast}(1)\abs{\abs{w}}_2^2\right)^{\kappa}.
\end{equation*}
\par
To conclude this section, we have shown that, for $X>2p^{N-1}$, we have
\begin{equation}\label{eq_Sigma_1_MT}
  \Sigma_1=\delta_{\kappa=\lambda}2^\kappa\kappa!
  \left(\frac{r_fH_{f,f^\ast}(1)}{2}\abs{\abs{w}}_2^2\right)^{\kappa} 
  +O_{\epsilon,f}\left(Y^{-1/2+\theta+\epsilon}+Y^{-1/2+\epsilon}+
    \frac{1}{\sqrt{p}}Y^{\nu/2+\epsilon}\right),
\end{equation}
in the case of a form $f$ which is not self-dual, and of odd $N$.

\subsubsection{The self-dual case for $N$ odd}\label{sec_variance_2}%

We now consider the case where $f$ is \textbf{self-dual}, namely
$f^\ast=f$, which is simpler. Indeed, in this case, the four terms in
\eqref{eq_before_variance} are all equal, and since
$$
\ube{1}+\ube{2}+\ube{3}+\ube{4}=\frac{\nu}{2},
$$
we obtain
\begin{multline*}
  \Sigma_1=\delta_{2\mid\nu}\left(\frac{1}{Y}\sum_{1\leq
      m<p/2}\afs(m)^2
    \left\vert\bfn\left(\frac{m}{Y}\right) \right\vert^2\right)^{\nu/2} \\
  \times\sum_{\uple{e}\in\{\pm 1\}^{\nu}} \sum_{\substack{\sigma\in
      P(\nu,\nu/2) \\\text{resonant}}}1+O_{\epsilon,
    f}\left(Y^{-1/2+\epsilon}+\frac{1}{\sqrt{p}}Y^{\nu/2+\epsilon}\right).
\end{multline*}
for $X>2p^{N-1}$. 
\par
By another application of Proposition \ref{propo_variance} and
Corollary \ref{coro_unitary_Bessel}, we conclude that
\begin{multline*}
  \Sigma_1=\delta_{2\mid\nu}\left(\frac{r_fH_{f,f^\ast}(1)}{2}\abs{\abs{w}}_2^2\right)^{\nu/2}\sum_{\uple{e}\in\{\pm
    1\}^{\nu}} 
\sum_{\substack{\sigma\in P(\nu,\nu/2) \\ \text{resonant}}}1\\
  +O_{\epsilon,f}\left(Y^{-1/2+\theta+\epsilon}+Y^{-1/2+\epsilon}+\frac{1}{\sqrt{p}}Y^{\nu/2+\epsilon}\right).
\end{multline*}
\par
We now have a (standard) lemma:

\begin{lemma}\label{lm-resonant-pairs}
With notation as above, the number of pairs $(\sigma,\uple{e})$ which
are resonant is
$$
\frac{\nu!}{(\nu/2)!}.
$$
\end{lemma}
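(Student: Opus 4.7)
The plan is to translate the resonance condition into a clean combinatorial statement, then reduce the count to a product of two standard enumerative quantities.

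First, I would recall what ``resonant'' means in terms of $\sigma$ and $\uple{e}$. The defining condition $\ose{1}=\ose{-1}=1$ for every $u$, combined with $\ose{1}+\ose{-1}=\os$ from \eqref{eq_note_2_4}, forces $\os=2$ for each $u\in\{1,\dots,\nu/2\}$. In other words, every fiber $\sigma^{-1}(u)$ consists of exactly two elements of $\{1,\dots,\nu\}$, one of which carries the sign $+1$ (under $\uple{e}$) and the other the sign $-1$. This is consistent with $\sum_u \os=\nu$ from \eqref{eq_note_1} since $\nu$ is even in the resonant regime.

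Next, I would separate the choice of $\sigma$ from the choice of $\uple{e}$. The condition \eqref{eq_perm} is the standard normalization that puts the blocks of a set partition in increasing order of their minima; consequently, elements of $P(\nu,\nu/2)$ correspond bijectively to set partitions of $\{1,\dots,\nu\}$ into $\nu/2$ non-empty blocks. Under the resonance condition, these are precisely the set partitions all of whose blocks have size $2$, i.e., the perfect matchings of $\{1,\dots,\nu\}$. The number of such matchings is the double factorial
\begin{equation*}
(\nu-1)!!=\frac{\nu!}{2^{\nu/2}(\nu/2)!}.
\end{equation*}

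Finally, I would count the sign vectors $\uple{e}$ compatible with a fixed resonant $\sigma$. For each of the $\nu/2$ pairs $\sigma^{-1}(u)$, one chooses which of the two elements gets $e_{\cdot}=+1$ (the other then gets $-1$), giving exactly $2^{\nu/2}$ choices. Multiplying, the total number of resonant pairs is
\begin{equation*}
\frac{\nu!}{2^{\nu/2}(\nu/2)!}\cdot 2^{\nu/2}=\frac{\nu!}{(\nu/2)!},
\end{equation*}
as claimed. There is no substantive obstacle here: the only point requiring care is checking that the normalization \eqref{eq_perm} really does put $P(\nu,\nu/2)$ in bijection with unordered set partitions (so as not to double-count matchings), which is an elementary induction on the first occurrence of each value.
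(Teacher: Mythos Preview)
Your proof is correct and follows essentially the same approach as the paper's: first identify that resonant $\sigma$ correspond to perfect matchings of $\{1,\dots,\nu\}$ (via the bijection between $P(\nu,\nu/2)$ and unordered set partitions encoded in \eqref{eq_perm}), count these as $\nu!/(2^{\nu/2}(\nu/2)!)$, then multiply by the $2^{\nu/2}$ sign choices per matching. The only minor difference is presentational---the paper fixes a section $I$ of $\sigma$ to parametrize the sign choices, while you directly argue per pair---but the substance is identical.
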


\begin{proof}[\proofname{} of lemma \ref{lm-resonant-pairs}]
We must count the  pairs $(\sigma,\uple{e})$, where $\sigma\in
P(\nu,\nu/2)$, $\uple{e}\in \{\pm 1\}^{\nu}$, such that the resonance condition
$$
\ose{1}=\ose{-1}=1
$$
holds for $1\leq u\leq \nu/2$.  This condition means exactly that, for
each $u$, there exists exactly one $j$, $1\leq j\leq \nu$, such that
$\sigma(j)=u$ and $e_j=1$, and one $k$, $1\leq k\leq \nu$ with
$\sigma(k)=u$ and $e_j=-1$. This means that $\sigma$ is an element of
$P(\nu,\nu/2)$ such that each $u\in \{1,\ldots,\nu/2\}$ has two
preimages in $\{1,\ldots, \nu\}$. 
\par
Given such a \emph{fixed} $\sigma$, a pair $(\sigma,\uple{e})$ is
resonant if and only if the signs associated to the two elements of
$\sigma^{-1}(u)$ are opposite for each $u$. If we fix a subset
$I\subset \{1,\ldots, \nu\}$, of size $\nu/2$, such that $\sigma$
restricted to $I$ is bijective, the tuples $\uple{e}$ are determined
by $e_j$, $j\in I$, and these signs can be chosen arbitrarily. Thus,
for every fixed $\sigma$ satisfying the condition, there are exactly
$2^{\nu/2}$ tuples $\uple{e}$ with $(\sigma,\uple{e})$ resonant.
\par
It remains to count the number of $\sigma\in P(\nu,\nu/2)$ such that
each $u\in \{1,\ldots,\nu/2\}$ has two preimages in $\{1,\ldots,
\nu\}$. As we observed after introducing~(\ref{eq_perm}), this amounts
to counting the set $\tilde{P}(\nu,\nu/2)$ of partitions of
$\{1,\ldots,\nu\}$ in $\nu/2$ subsets of size $2$, and this set has
order equal to
\begin{equation}\label{eq-match}
\frac{\nu!}{2^{\nu/2}\left(\nu/2\right)!}
\end{equation}
(indeed, the symmetric group $\mathfrak{S}_{\nu}$ acts transitively on
$\tilde{P}(\nu,\nu/2)$, and the stabilizer of the element
$\{\{1,2\},\ldots, \{\nu-1,\nu\}\}$ of $\tilde{P}(\nu,\nu/2)$ is
isomorphic to $(\Z/2\Z)^{\nu/2}\times \mathfrak{S}_{\nu/2}$ (one can
also, for instance, apply \cite[Example 5.2.6 and Exercice
5.43]{Stan00}).
\end{proof}

Thus, if $X>2p^{N-1}$, we have
\begin{equation}\label{eq_Sigma_1_MT_2}
\Sigma_1=\delta_{2\mid \nu}\frac{\nu!}{2^{\nu/2}\left(\nu/2\right)!}\left(r_fH_{f,f^\ast}(1)\abs{\abs{w}}_2^2\right)^{\nu/2} \\
+O_{\epsilon,f}\left(Y^{-1/2+\theta+\epsilon}+
  \frac{1}{\sqrt{p}}Y^{\nu/2+\epsilon}\right)
\end{equation}
if $f$ is self-dual, and $N$ is odd.

\subsubsection{End of the proof of Theorem \ref{theo_moments} for $N$
  odd}%

Equations \eqref{eq_bound_sigma2} and \eqref{eq_Sigma_1_MT_2} imply
Theorem \ref{theo_moments} if $f$ is self-dual.
\par
Equations \eqref{eq_bound_sigma2} and \eqref{eq_Sigma_1_MT} imply
Theorem \ref{theo_moments} if $\kappa\lambda\neq 0$ and $f$ is not
self-dual. To conclude, we briefly indicate what happens if $f$ is not
self-dual and $\lambda=0$ (the case $\kappa=0$ being similar). Arguing
as before, the understanding of $\mathsf{M}_f(X,p,(\kappa,0))$ boils
down to the estimation of
$$
  \Sigma_1=\delta_{2\mid\kappa}\sum_{\uple{e}\in\{\pm 1\}^\kappa}\sum_{\substack{\sigma\in P(\kappa,\kappa/2) \\\text{resonant}}}\Sigma_{1}(\sigma,\uple{e}) 
  +O_{\epsilon,
    f}\left(Y^{-1/2+\epsilon}+\frac{1}{\sqrt{p}}Y^{\kappa/2+\epsilon}\right)
$$
where
\begin{equation*}
  \Sigma_{1}(\sigma,\uple{e})=\left(\frac{1}{Y}\sum_{1\leq m<p/2}
    \afs(m)^2\bfn\left(\frac{m}{Y}\right)\overline{\bfns\left(\frac{m}{Y}\right)}\right)^{\kappa/2},
\end{equation*}
and we see that each such sum is subsumed in the error term by
Proposition~\ref{propo_variance} for $f\not=f^{\ast}$.

\subsection{The combinatorial analysis for $N$
  even}\label{sec_even_combi}%

In this entire section, $N$ is \textbf{even}. We recall that, in this
case, $\epsilon_f$ may be either $1$ or $-1$. We will then, in
addition to $\bfn(x)$ and $\bfns(x)$, use the notation
\newcommand{\bfnp}{\mathcal{B}^{+}_{\uple{\alpha}}}
\newcommand{\bfnm}{\mathcal{B}^{-}_{\uple{\alpha}}}
$$
\bfnp(x)=\mathcal{B}^{+1}_{\uple{\alpha}}[w](x),\quad
\bfnm(x)=\mathcal{B}^{-1}_{\uple{\alpha}}[w](x)
$$
(recall the definition~(\ref{eq_CL_2})).
\par
The general flow of the
argument is similar to that of the previous section, but the
combinatorics involved differs slightly.
\par
We begin as in the case of odd $N$. By \eqref{eq_sigma_1_last}, after
applying
Theorem \ref{theo_katz} (to estimate the sums
$S_{\sigma^{-1}(\uple{e}),\sigma^{1}(\uple{e})}^{(N)}(\uple{j};p)$)
and Proposition \ref{lemma_multiplicity} (to isolate the main terms),
and Lemma~\ref{lemma_easy_Bessel} (to clean-up the weight functions),
one gets
\begin{equation}\label{eq_after_Katz_even}
\Sigma_1=\sum_{\uple{e}\in\{\pm 1\}^{\nu}}\sum_{s=1}^{\nu}\sum_{\sigma\in P(\nu,s)}\Sigma_1(\sigma,\uple{e}) 
+O\left(\frac{1}{\sqrt{p}}\frac{1}{Y^{\nu/2}}\left(\sum_{1\leq\abs{m}<p/2}
|\afs(m)|\left\vert\bfn\left(\frac{m}{Y}\right)\right\vert\right)^{\nu}\right)
\end{equation}
where
\begin{multline*}
  \Sigma_1(\sigma,\uple{e})\coloneqq\left(\frac{\epsilon_f}{\sqrt{Y}}\right)^{\nu}\sum_{\substack{(j_1,\dots,j_{s})\in R^{s} \\
      \text{distinct}}}\prod_{\substack{1\leq u\leq s \\
      2\mid \ose{-1}+\ose{1}}}\afs(j_u)^{\oso} \af(j_u)^{\ost}
  \\  \times \bfn\left(\frac{j_u}{Y}\right)^{\osoe{1}}\bfn\left(\frac{-j_u}{Y}\right)^{\osoe{-1}} \bfns\left(\frac{j_u}{Y}\right)^{\oste{1}}\bfns\left(\frac{-j_u}{Y}\right)^{\oste{-1}}B^{(N)}_{\ose{1}+\ose{-1}},
\end{multline*}
the integer $B^{(N)}_{\ose{1}+\ose{-1}}$ being defined in
Theorem~\ref{theo_katz}.
\par
As earlier, according to Corollary \ref{coro_useful}, the error term
in \eqref{eq_after_Katz_even} is
\begin{equation*}
\ll \frac{1}{\sqrt{p}}Y^{\nu/2+\epsilon}
\end{equation*}
for all $\epsilon>0$ if $2p^{N-1}<X$.
\par
We now show that the main term in $\Sigma_1$ comes from the
contribution of pairs $(\sigma,\uple{e})$ where $\sigma$ in $P(\nu,s)$
and $\uple{e}$ in $\{\pm 1\}^{\nu}$ satisfy
\begin{equation}\label{eq_step_1_even}
\left(\ose{-1},\ose{1}\right)\in\left\{(2,0),(0,2)\right\}.
\end{equation}
for all $u\in\{1,\dots,s\}$. As before, we call these pairs \textbf{resonant}.
\par
Indeed, as in the case of $N$ odd, we first see that if the condition
that $\ose{1}$ and $\ose{-1}$ be even, in the product over $u$ in the
sum $\Sigma_1(\sigma,\uple{e})$, is not satisfied for one $u$ at
least, then Corollary \ref{coro_useful}
gives~(\ref{eq-non-resonant-bd}). Thus, as before, it is enough to
prove that
$$
s\leq \frac{\nu-1}{2}
$$
if $(\sigma,\uple{e})$ is non-resonant and $\ose{1}$, $\ose{-1}$ are
even for all $u$, since this leads to the same
bound~(\ref{eq-non-resonant-bd}), for $2p^{N-1}<X$, as in the odd
case.
\par
If $u$ satisfies~(\ref{eq_step_1_even}), then we have $\os=2$
by~(\ref{eq_note_2_4}). If $u$ does not
satisfy~(\ref{eq_step_1_even}), then since $\ose{1}$ and $\ose{-1}$
are even and $\ose{1}+\ose{-1}=\os\geq 1$, we must have $\ose{1}\geq
2$, $\ose{-1}\geq 2$, and therefore $\sigma_u=\ose{1}+\ose{-1}\geq 4$.
Denoting by $U$ the set of those $u$ which
satisfy~(\ref{eq_step_1_even}), so that $|U|<s$ if $\sigma$ is not
resonant, we get
$$
\nu=\sum_{1\leq u\leq s}{\os}= \sum_{u\in U}{\os}+\sum_{u\notin
  U}{\os} \geq 2|U|+4(s-|U|)\geq 2|U|+2(s-|U|)+1\geq 2s+1,
$$
as desired.
\par
If $(\sigma,\uple{\nu})$ is resonant, then $\nu$ is even and $s=\nu/2$
by \eqref{eq_note_1}. It follows therefore that
$$
\Sigma_1=\delta_{2\mid\nu}
\sum_{\uple{e}\in\{\pm 1\}^{\nu}}\sum_{\substack{\sigma\in P(\nu,\nu/2) \\
    \text{resonant}}} \tilde{\Sigma}_1(\sigma,\uple{e}) +O_{\epsilon,
  f}\left(Y^{-1/2+\epsilon}+\frac{1}{\sqrt{p}}Y^{\nu/2+\epsilon}\right)
$$
if $2p^{N-1}<X$, where
\begin{multline*}
  \tilde{\Sigma}_1(\sigma,\uple{e})=\frac{1}{Y^{\frac{\nu}{2}}}\sum_{\substack{(j_1,\dots,j_{\nu/2})\in
      R^{\nu/2} \\
      \text{distinct}}}\prod_{u=1}^{\nu/2}\afs(j_u)^{\oso}
  \af(j_u)^{\ost} \\
  \times
  \bfn\left(\frac{j_u}{Y}\right)^{\osoe{1}}\bfn\left(\frac{-j_u}{Y}\right)^{\osoe{-1}}
  \bfns\left(\frac{j_u}{Y}\right)^{\oste{1}}\bfns\left(\frac{-j_u}{Y}\right)^{\oste{-1}},
\end{multline*}
by Proposition \ref{lemma_multiplicity} (i.e., the condition $B^{(N)}_{2}=1$).
\par
Defining the $4$-tuple $\bose$ for $1\leq u\leq \nu/2$ as
in~(\ref{eq-bose}), there are now $6$ possibilities for  $\bose$,
namely
\begin{gather}\label{eq-6}
\omega_1=(1,1,0,0),\quad \omega_2=(0,0,1,1),\\
\omega_3=(2,0,0,0),\quad
\omega_4=(0,0,2,0),\quad \omega_5=(0,2,0,0),\quad \omega_6=(0,0,0,2). 
\end{gather}
\par
If we sum in $\tilde{\Sigma}_1$ over all the possible $\nu/2$-tuples
$(j_1,\dots,j_{\nu/2})$ instead of those with distinct components, we
only introduce a difference with is dominated by the error term. Thus,
collecting identical terms in the product, and denoting
\begin{equation*}
  \ube{b}=\left\vert\left\{1\leq u\leq \nu/2, \bose=\omega_b\right\}\right\vert
\end{equation*}
for $1\leq b\leq 6$, similarly to the case of $N$ odd, we can write
$$
\Sigma_1=\delta_{2\mid\nu}\sum_{\uple{e}\in\{\pm
  1\}^{\nu}}\sum_{\substack{\sigma\in P(\nu,\nu/2) \\\text{resonant}}}
\tilde{\Sigma}_1(\sigma,\uple{e}) \\
+O_{\epsilon, f}
\left(Y^{-1/2+\epsilon}+\frac{1}{\sqrt{p}}Y^{\nu/2+\epsilon}\right)
$$
if $2p^{N-1}<X$, with 
\begin{multline}\label{eq_before_variance_even}
  \tilde{\Sigma}_1(\sigma,\uple{e})=\left(\frac{1}{Y}\sum_{1\leq
      m<p/2}
    |\afs(m)|^2\bfn\left(\frac{-m}{Y}\right)\bfns\left(\frac{-m}{Y}\right)\right)^{\ube{1}} \\
  \times\left(\frac{1}{Y}\sum_{1\leq m<p/2}
    |\afs(m)|^2\bfn\left(\frac{m}{Y}\right)\bfns\left(\frac{m}{Y}\right)\right)^{\ube{2}} \\
  \times\left(\frac{1}{Y}\sum_{1\leq m<p/2}\afs(m)^2
    \bfn\left(\frac{-m}{Y}\right)^2\right)^{\ube{3}}
  \left(\frac{1}{Y}\sum_{1\leq m<p/2}\afs(m)^2
    \bfn\left(\frac{m}{Y}\right)^2\right)^{\ube{4}} \\
  \times\left(\frac{1}{Y}\sum_{1\leq m<p/2}\af(m)^2
    \bfns\left(\frac{-m}{Y}\right)^2\right)^{\ube{5}}
  \left(\frac{1}{Y}\sum_{1\leq
      m<p/2}\af(m)^2\bfns\left(\frac{m}{Y}\right)^2\right)^{\ube{6}}.
\end{multline}

\subsubsection{The non self-dual case for $N$ even}\label{sec_variance_3}%

We must again distinguish between the case where $f$ is not self-dual,
and the self-dual case. Here, we assume that $f$ is \textbf{not
  self-dual}, namely $f^\ast\neq f$.
\par
First, note that we can apply again Proposition \ref{propo_variance}
for suitable functions $\mathrm{B}$ to the six sums in
\eqref{eq_before_variance_even}, leading to the bound
\begin{equation*}
  \tilde{\Sigma}_1(\sigma,\uple{e})\ll
Y^{(-1/2+\theta+\epsilon)(\ube{3}+\ube{4}+\ube{5}+\ube{6})},
\end{equation*}
and hence all terms in \eqref{eq_before_variance_even} such that one
of $\ube{3}$, $\ube{4}$, $\ube{5}$ or $\ube{6}$ is non-zero contribute
to the error term. We will say that $(\sigma,\uple{e})$ is
\textbf{focusing} if $\ube{3}=\cdots=\ube{6}=0$.
\par
It follows by \eqref{eq_before_unitary}, again for $X>2p^{N-1}$, that
we have
\begin{multline}\label{eq_generic_even}
  \Sigma_1=\delta_{2\mid\nu}\left(r_fH_{f,f^\ast}(1)\right)^{\nu/2}\sum_{\uple{e}\in\{\pm
    1\}^{\nu}}\sum_{\substack{\sigma\in P(\nu,\nu/2)
      \\\text{focusing}}} \Bigl(
  \mathcal{M}\left[|\bfnm|^2\right](1)\Bigr)^{\ube{1}}
  \Bigl(  \mathcal{M}\left[|\bfnm|^2\right](1)\Bigr)^{\ube{2}} \\
  +O_{\epsilon,
    f}\left(Y^{-1/2+\theta+\epsilon}+\frac{1}{\sqrt{p}}Y^{\nu/2+\epsilon}\right)
\end{multline}
(we also used the properties $r_f=r_{f^\ast}$ and
$H_{f,f^\ast}(1)=H_{f^\ast,f}(1)$.)
\par
As we did in the case of $N$ odd, we determine in a lemma, similar to
Lemma~\ref{lm-focusing-pairs}, the focusing pairs.

\begin{lemma}\label{lm-focusing-pairs2}
  With notation as above, for any $(\sigma,\uple{e})$ which is a
  focusing pair, we have $\kappa=\lambda=\nu/2$. Furthermore, the map
$$
(\sigma,\uple{e})\mapsto (\tilde{\sigma},\tilde{\uple{e}}),
$$
where $\tilde{\sigma}$ is the restriction of $\sigma$ to
$\{\kappa+1,\ldots,\nu\}$ and $\tilde{\uple{e}}$ is the $\kappa$-tuple
$(e_u)_{1\leq u\leq \kappa}$, is a bijection between the set of
focusing pairs $(\sigma,\uple{e})$ and the product set
$S_{\kappa}\times \{\pm 1\}^{\kappa}$, where $S_{\kappa}$ is the set
of bijections from $\{\kappa+1,\ldots,\nu\}$ to $\{1,\ldots,\kappa\}$.
\par
We then have, for all such $(\sigma,\uple{e})$, the relation
\begin{equation}\label{eq-ube2}
  \ube{1}=|\{u\ ,\  1\leq u\leq \kappa\text{ and } \tilde{e}_u=-1\}|,
\end{equation}
and $\ube{2}=\kappa-\ube{1}$. 
\end{lemma}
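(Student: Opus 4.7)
The plan is to follow the same template as Lemma \ref{lm-focusing-pairs}, adapting the sign bookkeeping to reflect the structural difference between the admissible patterns in the even case, namely $\omega_1=(1,1,0,0)$ and $\omega_2=(0,0,1,1)$ from \eqref{eq-6}. The crucial contrast with the odd-$N$ situation is that here, the two preimages of a given $u$ under $\sigma$ must carry the \emph{same} sign, rather than opposite signs, and this is the source of the sign convention~(\ref{eq-ube2}) differing from~(\ref{eq-ube}).

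First I would show $\kappa=\lambda=\nu/2$. For any $u\in\{1,\dots,\nu/2\}$, inspection of $\omega_1$ and $\omega_2$ shows that $\bose\in\{\omega_1,\omega_2\}$ forces $\oso=1$ and $\ost=1$. Consequently, the restrictions of $\sigma$ to $\{1,\dots,\kappa\}$ and to $\{\kappa+1,\dots,\nu\}$ are both surjective onto $\{1,\dots,\nu/2\}$, giving $\kappa\geq\nu/2$ and $\lambda\geq\nu/2$. Since $\kappa+\lambda=\nu$, both are equal to $\nu/2$, and both restrictions are bijections. The normalisation condition~(\ref{eq_perm}) then forces, by an elementary induction, the restriction of $\sigma$ to $\{1,\dots,\kappa\}$ to be the identity, so $\sigma$ is entirely determined by $\tilde{\sigma}\in S_\kappa$.

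Next I would extract the relation between $\uple{e}$ and $\tilde{\uple{e}}$. For each $u$, the defining condition $\bose\in\{\omega_1,\omega_2\}$ says that the (unique) $k\in\{1,\dots,\kappa\}$ with $\sigma(k)=u$ and the (unique) $\ell\in\{\kappa+1,\dots,\nu\}$ with $\sigma(\ell)=u$ carry a common sign; since $\sigma|_{\{1,\dots,\kappa\}}=\mathrm{Id}$, this is exactly
\begin{equation*}
e_{\tilde{\sigma}^{-1}(u)}=e_u\qquad(1\leq u\leq\kappa),
\end{equation*}
the analogue of~(\ref{eq-alles}) \emph{without} the minus sign. In particular, $\uple{e}$ is determined by $\tilde{\uple{e}}$, so the map $(\sigma,\uple{e})\mapsto(\tilde{\sigma},\tilde{\uple{e}})$ is injective. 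Surjectivity follows exactly as in the proof of Lemma \ref{lm-focusing-pairs}: given $(\tilde{\sigma},\tilde{\uple{e}})\in S_\kappa\times\{\pm1\}^\kappa$, extend $\tilde{\sigma}$ by the identity on $\{1,\dots,\kappa\}$ and define $\uple{e}$ by the above equality; the property~(\ref{eq_perm}) is verified verbatim as before.

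Finally, for the exponent count~(\ref{eq-ube2}), $\ube{1}$ counts indices $u$ with $\bose=\omega_1=(1,1,0,0)$, i.e., where the common sign $e_u$ is $-1$, giving $\ube{1}=|\{u\,:\,1\leq u\leq\kappa,\ \tilde{e}_u=-1\}|$. Since $\ube{1}+\ube{2}=\nu/2=\kappa$ when $(\sigma,\uple{e})$ is focusing, we recover $\ube{2}=\kappa-\ube{1}$. The only subtle point, and the one to watch carefully, is the sign flip between the odd and even cases: in the even case both preimages share a sign (because the relevant patterns are $\omega_1,\omega_2$ with zeros in positions $3,4$ or $1,2$), whereas in the odd case they were opposite. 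This is what ultimately produces a slightly different form of the Gaussian moment identity in the subsequent variance computation, but the bijection itself is structurally identical to Lemma \ref{lm-focusing-pairs}.
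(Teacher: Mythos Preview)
Your proof is correct and follows essentially the same approach as the paper's own proof: you identify that $\omega_1,\omega_2$ force $\oso=\ost=1$ (hence $\kappa=\lambda=\nu/2$ and both restrictions of $\sigma$ are bijective), use~(\ref{eq_perm}) to pin down $\sigma|_{\{1,\dots,\kappa\}}=\mathrm{Id}$, observe that the two preimages of each $u$ share a common sign, and deduce the bijection and~(\ref{eq-ube2}) exactly as in Lemma~\ref{lm-focusing-pairs}. Your remark contrasting the equal-sign relation $e_{\tilde{\sigma}^{-1}(u)}=e_u$ here with the opposite-sign relation~(\ref{eq-alles}) in the odd case is precisely the point the paper makes as well.
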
 

As in the earlier case, the point is that $\ube{1}$ and $\ube{2}$ are,
for every focusing pair $(\sigma,\uple{e})$, independent of
$\sigma$. We will denote $\ube{1}$ by $\uote$.

\begin{proof}[\proofname{} of lemma \ref{lm-focusing-pairs2}]
  The focusing condition on $(\sigma,\uple{e})$ means that, for every
  $u$ with $1\leq u\leq \kappa$, either $\bose=\omega_1$ or
  $\bose=\omega_2$. By definition, the condition $\bose=\omega_1$ is
  equivalent with the property that $u$ has exactly one pre-image $j$
  under $\sigma$ with $1\leq j\leq \kappa$, and one pre-image $\ell$
  with $\kappa+1\leq\ell\leq \nu$, and that furthermore
  $e_j=e_{\ell}=-1$.
\par
Similarly, $\bose=\omega_2$ means that that $u$ has exactly one
pre-image $j$ under $\sigma$ with $1\leq j\leq \kappa$, and one
pre-image $\ell$ with $\kappa+1\leq\ell\leq \nu$, and that furthermore
$e_j=e_{\ell}=-1$.
\par
Arguing as in the proof of Lemma~\ref{lm-focusing-pairs}, we deduce
that $\kappa=\lambda=\nu/2$ and that the restriction of $\sigma$ to
$\{1,\ldots,\kappa\}$ is the identity, and the restriction
$\tilde{\sigma}$ of $\sigma$ to $\{\kappa+1,\ldots,\nu\}$ is an
element of $S_{\kappa}$.
\par
In addition, the signs $e_j$ for the two pre-images of $u$ always
coincide, which means that $e_{\tilde{\sigma}^{-1}(u)}=e_u$ for $1\leq
u\leq \kappa$. Thus the map $(\sigma,\uple{e})\mapsto
(\tilde{\sigma},\tilde{\uple{e}})$ is injective. We then check as in
Lemma~\ref{lm-focusing-pairs} that it is surjective, and that the
formula~(\ref{eq-ube2}) holds.
\end{proof}


Using this lemma, it follows that the main term in \eqref{eq_generic}
equals
$$
  \delta_{\kappa=\lambda}\left(r_fH_{f,f^\ast}(1)\right)^{\kappa}
  \sum_{\tilde{\sigma}\in S_{\kappa}}
  \sum_{\tilde{\uple{e}}\in\{\pm 1\}^\kappa}
\left(\mathcal{M}\left[|\bfnm|^2\right](1)\right)^{\uote} 
  \times\left(\mathcal{M}\left[|\bfnp|^2\right](1)\right)^{\kappa-\uote},
$$
which is equal to 
\begin{equation*} 
  \delta_{\kappa=\lambda}\kappa!\left(r_fH_{f,f^\ast}(1)
    \left(\mathcal{M}\left[|\bfnm|^2\right](1)
      +\mathcal{M}\left[|\bfnp|^2\right](1)\right)\right)^{\kappa},
\end{equation*}
because $\uote$ depends only on $\tilde{\uple{e}}$.
\par
By Corollary \ref{coro_unitary_Bessel}, this is equal to
\begin{equation*}
\delta_{\kappa=\lambda}\kappa!\left(r_fH_{f,f^\ast}(1)\abs{\abs{w}}_2^2\right)^{\kappa}
\end{equation*}
and we conclude that, if $X>2p^{N-1}$, then we have
\begin{equation}\label{eq_Sigma_1_MT_even}
  \Sigma_1=\delta_{\kappa=\lambda}2^\kappa\kappa!\left(\frac{r_fH_{f,f^\ast}(1)}{2}\abs{\abs{w}}_2^2\right)^{\kappa} \\
  +O_{\epsilon,f}\left(Y^{-1/2+\theta+\epsilon}+\frac{1}{\sqrt{p}}Y^{\nu/2+\epsilon}\right).
\end{equation}
\subsubsection{The self-dual case for $N$ even}\label{sec_variance_4}%

In this section, $f$ is \textbf{self-dual}, namely $f^\ast=f$ and $N$
is even. Note that this corresponds formally to the case treated
in~\cite{FoGaKoMi} of holomorphic cusp forms with trivial nebentypus
for $N=2$ (although, as we have already discussed, the restriction to
holomorphic forms means that the cases we consider are disjoint.)
\par
In this case, \eqref{eq_before_variance_even} and (once more)
Proposition \ref{propo_variance}
lead to
\newcommand{\vbe}{v(\uple{e})}
\begin{multline}\label{eq_generic_even_2}
  \Sigma_1=\delta_{2\mid\nu}\left(r_fH_{f,f^\ast}(1)\right)^{\nu/2}
\sum_{\uple{e}\in\{\pm 1\}^{\nu}}\sum_{\substack{\sigma\in P(\nu,\nu/2) \\\text{focusing}}} 
  \left(\mathcal{M}\left[|\bfnm|^2\right](1)\right)^{\vbe}
  \left(\mathcal{M}\left[|\bfnp|^2\right](1)\right)^{\nu/2-\vbe} \\
  +O_{\epsilon,
    f}\left(Y^{-1/2+\theta+\epsilon}+\frac{1}{\sqrt{p}}Y^{\nu/2+\epsilon}\right)
\end{multline}
if $2p^{N-1}<X$, where
$$
\vbe=| \left\{1\leq u\leq\nu/2,
  \left(\ose{-1},\ose{1}\right)=(2,0)\right\}|
$$
so that 
$$
\nu/2-\vbe=|\left\{1\leq u\leq\nu/2,
  \left(\ose{-1},\ose{1}\right)=(0,2)\right\}|.
$$
\par
Note that $\vbe$ depends on $\sigma$. To go further, we observe that
if $\sigma\in P(\nu,\nu/2)$ occurs in a focusing pair, it must satisfy
$$
\os=|\sigma^{-1}(u)|=2
$$
for all $u\in \{1,\ldots,\nu/2\}$. Conversely, assume $\sigma$
satisfies this condition. Then from the definition of $\omega_1$ and
$\omega_2$, it follows that a tuple $\uple{e}$ is such that
$(\sigma,\uple{e})$ is focusing if and only if, for each $u$, we have
$e_m=e_n$, where $\sigma^{-1}(u)=\{m,n\}$. This means that there are
precisely $2^{\nu/2}$ focusing pairs $(\sigma,\uple{e})$ with $\sigma$
fixed, corresponding to arbitrary assignments of signs to the $\nu/2$
pairs of elements with the same image under $\sigma$. 
\par
In this context, $\vbe$ is equal to the number of $u$ for which the
corresponding sign $e_m=e_n$ is $-1$, and in particular, for any $r$,
the number of tuples $\uple{e}$ for which $\vbe=r$ is equal to
$$
\binom{\nu/2}{r},
$$
corresponding to the choice of $r$ pairs of elements with common sign
$-1$.
\par
Formally, it follows that for any complex numbers $z_1$ and $z_2$,
we have
\begin{align*}
\sum_{\uple{e}\in\{\pm 1\}^{\nu}} \sum_{\substack{\sigma\in P(\nu,\nu/2)\\\text{focusing}}}z_1^{\vbe}z_2^{\nu/2-\vbe}
&=\sum_{\substack{\sigma\in P(\nu,\nu/2)\\\os=2}}
\sum_{\substack{\uple{e}\\(\sigma,\uple{e})\text{
      focusing}}}z_1^{\vbe}z_2^{\nu/2-\vbe}\\
&=\sum_{\substack{\sigma\in P(\nu,\nu/2)\\\os=2}}
\sum_{r=0}^{\nu/2}\binom{\nu/2}{r}
z_1^{r}z_2^{\nu/2-r}\\
&=(z_1+z_2)^{\nu/2}|\{\sigma\in P(\nu,\nu/2)\,\ \os=2\text{ for all }
u\}|\\
&=\frac{\nu!}{2^{\nu/2}(\nu/2)!}(z_1+z_2)^{\nu/2},
\end{align*}
(where the last step follows from~(\ref{eq-match}), which is
established in the proof of Lemma~\ref{lm-resonant-pairs}).
\par
Applying this formula and using Corollary \ref{coro_unitary_Bessel},
we derive
\begin{equation}\label{eq_Sigma_1_MT_2_even}
\Sigma_1=\delta_{2\mid\nu}\left(r_fH_{f,f^\ast}(1)\abs{\abs{w}}_2^2\right)^{\nu/2}
\frac{\nu!}{2^{\nu/2}(\nu/2)!}  +O_{\epsilon,
  f}\left(Y^{-1/2+\theta+\epsilon}+\frac{1}{\sqrt{p}}Y^{\nu/2+\epsilon}\right)
\end{equation}
if $2p^{N-1}<X$. 

\subsubsection{End of the proof of Theorem \ref{theo_moments} for $N$ even}%
Equations \eqref{eq_bound_sigma2} and \eqref{eq_Sigma_1_MT_2_even} imply Theorem \ref{theo_moments} if $f$ is self-dual.
\par
Equations \eqref{eq_bound_sigma2} and \eqref{eq_Sigma_1_MT_even} imply
Theorem \ref{theo_moments} if $\kappa\lambda\neq 0$ and $f$ is not
self-dual. It is once more easy to check the result when $\kappa$ or
$\lambda=0$. For instance, if $\lambda=0$ (and $f$ is not self-dual),
understanding $\mathsf{M}_f(X,p,(\kappa,0))$ boils down to
understanding
$$
  \delta_{2\mid\kappa}\sum_{\uple{e}\in\{\pm 1\}^\kappa}\sum_{\substack{\sigma\in P(\kappa,\kappa/2) \\\text{resonant}}}\Sigma_{1}(\sigma,\uple{e})  +O_{\epsilon,
    f}\left(Y^{-1/2+\epsilon}+\frac{1}{\sqrt{p}}Y^{\kappa/2+\epsilon}\right)
$$
where $\Sigma_1(\sigma,\uple{e})$ is given by
$$
\left(\frac{1}{Y}\sum_{1\leq m<p/2}
\afs(m)^2\bfn\left(\frac{m}{Y}\right)^2\right)^{\vbe} \\
\left(\frac{1}{Y}\sum_{1\leq m<p/2}\afs(m)^2
  \bfn\left(\frac{-m}{Y}\right)^2\right)^{\nu/2-\vbe}.
$$
\par
These terms are all dominated by the error term, by
Proposition~\ref{propo_variance} (this is because $L(f\times f,s)$
does not have a pole at $s=1$ if $f$ is not self-dual).


\section{Proof of the convergences in law}\label{sec_8}%

This section is devoted to the proof of Corollary
\ref{coro_law}. Thus, $X=p^N/\Phi(p)$ for a function $\Phi$ that tends
to infinity but satisfies $\Phi(x)\ll x^{\epsilon}$ for all $\epsilon>0$.
\subsection{The non self-dual case}%

In this section, we assume that $f$ is \textbf{not self-dual}. In
order to finish the proof of Corollary~\ref{coro_law}, it is enough to
apply the following probabilistic lemma to $Z_f(X,p,\ast)$.

\begin{lemma}\label{lemma_standard_proba}
Let $(X_n)_{n\geq 1}$ be a sequence of complex-valued random variables, let
$\sigma>0$ be a positive real number. Then $(X_n)_{n \geq 1}$ converges in law to a Gaussian vector with covariance matrix
$$
\begin{pmatrix}
\sigma&0\\
0&\sigma
\end{pmatrix}
$$
if and only if, for any non-negative integers $\kappa, \lambda\geq 0$,
we have
$$
\lim_{n\rightarrow +\infty}
\mathbf{E}\left(X_n^{\kappa}\bar{X}_n^{\lambda}\right)=\delta_{\kappa,\lambda}2^{\kappa}
\kappa!\sigma^{k}.
$$
\end{lemma}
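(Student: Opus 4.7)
The plan is to reduce the claim to the classical method of moments for random vectors in $\mathbb{R}^2$. Writing $X_n = U_n + i V_n$ with $U_n = \Re X_n$ and $V_n = \Im X_n$, the identities $2U_n = X_n + \bar X_n$ and $2iV_n = X_n - \bar X_n$ expand each bivariate moment $\mathbf{E}(U_n^j V_n^k)$ as a finite linear combination of complex mixed moments $\mathbf{E}(X_n^\kappa \bar X_n^\lambda)$ of the same total degree, and conversely. Hence the hypothesis on convergence of complex mixed moments is equivalent to convergence of all bivariate moments of the random vector $(U_n, V_n)$ to some limit.

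Next, I would identify the target limits as the joint moments of the bivariate Gaussian $(U,V)$ with independent $N(0,\sigma)$ coordinates. By independence and the standard formula $\mathbf{E}(U^{2k}) = (2k-1)!!\,\sigma^k$, one has $\mathbf{E}(U^j V^k) = 0$ unless $j$ and $k$ are both even, in which case this expectation equals $(j-1)!!\,(k-1)!!\,\sigma^{(j+k)/2}$. Substituting into the binomial expansion of $\mathbf{E}((U+iV)^\kappa (U-iV)^\lambda)$ and simplifying yields $\delta_{\kappa,\lambda}\,2^\kappa\,\kappa!\,\sigma^\kappa$; equivalently, $X/\sqrt{2\sigma}$ is a standard complex Gaussian whose mixed moments are given by this well-known formula. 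Thus the asserted values are indeed the moments of the target Gaussian.

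Finally, since the bivariate Gaussian has moments growing no faster than a multiple of $(2\sigma)^{(j+k)/2}(j+k)!$, its moments satisfy Carleman's condition on each marginal and the distribution is uniquely determined by its moments. The classical method of moments (see, for instance, Billingsley, \emph{Probability and Measure}, Theorem~30.2) then gives the ``if'' direction: convergence of all bivariate moments of $(U_n,V_n)$ forces convergence in law to $(U,V)$. The converse follows from convergence in law together with uniform integrability; in the applications of interest this is automatic from an a priori polynomial bound on the $X_n$, itself a consequence of the Weil bound \eqref{eq_bound_K3} and \eqref{eq_estim_voro}. The only calculation of substance is the moment identity of the middle step, and no substantive obstacle arises; the result is essentially a packaging of standard probabilistic facts in a form convenient for the proof of Corollary~\ref{coro_law}.
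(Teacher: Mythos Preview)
Your proposal is correct and follows essentially the same route as the paper's proof: both reduce to the method of moments for the real pair $(U_n,V_n)$ by expanding bivariate moments as linear combinations of the complex mixed moments $\mathbf{E}(X_n^{\kappa}\bar X_n^{\lambda})$, identifying the target values with the moments of the isotropic Gaussian, and invoking the classical moment criterion (the paper cites \cite[Lemma~5.1]{FoGaKoMi}, you cite Billingsley directly). You are in fact slightly more careful than the paper on the necessity direction, correctly noting that convergence in law alone does not imply convergence of moments without uniform integrability---which, as you observe, holds in the intended application via the a~priori bound \eqref{eq_estim_voro}.
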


\begin{proof}[\proofname{} of lemma \ref{lemma_standard_proba}]%
This is presumably standard, but we give a quick proof for lack of a suitable reference.
\par
The necessity follows by an easy argument from the fact that for a Gaussian variable $Z$ with the stated covariance matrix, we have
$$
\mathbf{E}(Z^{\kappa}\bar{Z}^{\lambda})=\delta_{\kappa,\lambda}2^{\kappa}
\kappa!\sigma^{k}
$$
(this is straightfoward since it can be evaluated using the explicit
density of $Z$ with respect to Lebesgue measure; after checking that
this is non-zero if and only if $\kappa=\lambda$, one can also notice
that $\mathbf{E}(|Z|^{2\kappa})$ is the $2k$-th moment of a so-called
\emph{Rayleigh distribution} with parameter $\sigma$, and check its
value in any table of probability distributions.)
\par
For sufficiency, write $X_n=A_n+iB_n$ where the random variables $A_n$
and $B_n$ are real-valued. By a well-known result (see, e.g.,
\cite[Lemma 5.1]{FoGaKoMi}), convergence to the Gaussian holds
provided, for any $k$, $l\geq 0$, we have
$$
\mathbf{E}(A_n^kB_n^l)\rightarrow \sigma^{(k+l)/2}m_km_l.
$$
But, denoting
$M(\kappa,\lambda)=\mathbf{E}(X_n^{\kappa}\bar{X}_n^{\lambda})$, we
have
$$
\mathbf{E}(A_n^kB_n^l)=\frac{1}{2^{\nu}i^\lambda}
\sum_{\substack{0\leq k\leq\kappa \\
    0\leq\ell\leq\lambda}}(-1)^\ell\binom{\kappa}{k}\binom{\lambda}{\ell}
M(\nu-k-l,k+l)
$$
since, as recalled above, the assumption means that
$$
M(\nu-k-l,k+l)\rightarrow \mathbf{E}(Z^{\kappa}\bar{Z}^{\lambda})
$$
where $Z$ is the Gaussian as above. Denoting $Z=A+iB$, we deduce that
$$
\mathbf{E}(A_n^kB_n^l)\rightarrow \frac{1}{2^{\nu}i^\lambda}
\sum_{\substack{0\leq k\leq\kappa \\
    0\leq\ell\leq\lambda}}(-1)^\ell\binom{\kappa}{k}\binom{\lambda}{\ell}
\mathbf{E}(Z^{\kappa}\bar{Z}^{\lambda})
=\mathbf{E}(A^kB^l)=\sigma^{(k+l)/2}m_km_l
$$
(by rewinding the first formula).
\end{proof}

\begin{remark}
  This result, easy as it is, implies the following combinatorial
  identities, by writing all expectations of Gaussians in
  ``numerical'' terms: for any integers $\kappa$, $\lambda\geq 0$
  satisfying $2\mid\nu$, we have
$$
\sum_{\substack{0\leq k\leq\kappa \\
    0\leq\ell\leq\lambda \\
    k+\ell=\nu/2}}(-1)^{\ell}\binom{\kappa}{k}\binom{\lambda}{\ell}
=\delta_{\substack{2\mid\kappa \\
    2\mid\lambda}}(-1)^{\delta/2}\frac{\binom{\nu/2}{\delta/2}\binom{\nu}{\nu/2}}{\binom{\nu}{\delta}}
$$
where $\delta=\min{(\kappa,\lambda)}$. These identities are not so
easy to establish directly and are just stated without proof in \cite[Formulas (3.58) and (3.80)]{MR0354401}.
\end{remark}

\subsection{The self-dual case}%

In this section, we assume that the cusp form $f$ is
\textbf{self-dual}. The $k$-th moment of $E_f(X,p,\ast)$ is
$\mathsf{M}_f(X,p,(k,0))$ for all non-negative integer $k$. By Theorem
\ref{theo_moments}, we get
\begin{equation*}
\mathsf{M}_f(X,p,(k,0))=m_k\left(2c_{f,w}\right)^{k/2}+O_{\epsilon, f}\left(\frac{1}{\Phi(p)^{1/2-\theta+\epsilon}}\right)
\end{equation*}
such that
\begin{equation*}
\lim_{\substack{p\in\prem \\
p\to+\infty}}\mathsf{M}_f(X,p,(k,0))=m_k\left(2c_{f,w}\right)^{k/2}.
\end{equation*}
By standard results, convergence to a centered Gaussian random
variable is equivalent to convergence of the moments.  Hence
the sequence of random variables $E_f(X,p,\ast)$ converges in law to a
centered Gaussian random variable with variance is $2c_{f,w}$.
\section{The case of the multiple divisor functions}\label{sec_9}%
\label{sec-dn}

In this section, we will give a sketch of the proof of Theorem
\ref{theo_moments_d_N}, which is very similar to the self-dual case of
Theorem~\ref{theo_moments}, the additional ingredient being the
presence of main terms arising from the positivity of the divisor
functions.
\par
We begin by stating the corresponding version of the Vornon\u{\i}
summation formula.  A. Ivi\'{c} proved such a formula for $d_N$, when
$N\geq 3$, in \cite[Theorem 2]{MR1635762}. The following statement is
both a simplified (but not straightforward) statement for prime
denominators and a slightly renormalised version of this formula.
\par
We note that we could use a less precise version, as far as
understanding the main term is concerned, but we give the full version
as it might be potentially useful for other purposes.
\par
We will need for this the constants $\gamma_n(\alpha)$ defined by
$\gamma_{-1}(\alpha)=1$ and
\begin{equation*}
\gamma_{n}(\alpha)=\frac{(-1)^n}{n!}\lim_{m\to +\infty}\left(\sum_{k=0}^m\frac{\log^n{(k+\alpha)}}{k+\alpha}-\frac{\log^{n+1}{(m+\alpha)}}{n+1}\right)
\end{equation*}
for $n\geq 0$ and $0\leq\alpha\leq 1$. For $\alpha=0$, these are the
Stieltjes numbers $\gamma_n$ for $n\geq 0$, for instance
$\gamma_0=\gamma$ is the Euler-Mascheroni constant. These numbers
occur in the Laurent expansion of $\zeta(s)$ at $s=1$.
\par
We will denote by $\mathcal{B}[w]$ the Mellin transform
$\mathcal{B}_{\uple{0}}[w]$ as in~(\ref{eq_CL_2}) for
$\uple{\alpha}=\uple{0}=(0,\ldots, 0)$. Note that
$\uple{0}^{\ast}=\uple{0}$. We also extend $d_N$ to non-zero integers
by defining $d_N(m)=d_N(|m|)$ if $m\leq -1$.

\begin{proposition}[Vorono\u{\i} summation formula for
  $d_N$]\label{propo_voronoi_non_cusp}
  Let $N\geq 2$ be an integer.  Let $w:\R_+^\ast\to\R$ be a smooth and
  compactly supported function. Let $p$ be a prime number and let $b$
  be an integer. If $p$ does not divide $b$ then
\begin{multline}\label{eq_voronoi_non_cusp}
  \sum_{n\geq 1}d_N(n)e\left(\frac{bn}{p}\right)w(n)=
\frac{1}{p}\int_{x=0}^{+\infty}\sum_{k=1}^N\frac{\beta_k(p)}{(k-1)!}\log^{k-1}{(x)}w(x)\dd x \\
  +\frac{1}{p^{N/2}}\sum_{m\in\Z^\ast}d_N(m)K_{N-1}(\bar{b}m,p)\mathcal{B}[w]
  \left(\frac{m}{p^N}\right) \\
  +\sum_{\ell=1}^{N-2}\binom{N-1}{\ell}\frac{(-1)^{\ell+1}}{p^\ell}\sum_{m\in\Z^\ast}
d_N(m)\mathcal{B}[w]\left(\frac{m}{p^\ell}\right)
\end{multline}
where $\bar{b}$ denotes the inverse of $b$ modulo $p$ and
\begin{multline}\label{eq_coeff_beta}
\beta_k(p)=\frac{1}{p^{N-1}}\sum_{1\leq a_1,\dots,a_N\leq p}e\left(\frac{ba_1\dots a_N}{p}\right) \\
\times\sum_{m=0}^{N-k}\frac{(-1)^mr^m}{m!}\sum_{\substack{n_1,\dots,n_N\geq -1 \\
n_1+\dots n_N=-k-m}}\prod_{j=1}^N\gamma_{n_j}\left(\frac{n_j}{p}\right)\log^m{(p)}
\end{multline}
for $1\leq k\leq N$. 
\end{proposition}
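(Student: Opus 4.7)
The plan is to follow the same Mellin-transform and contour-shift strategy as in Proposition \ref{propo_formule_voronoi}, with two new features: the underlying Dirichlet series is essentially $\zeta(s)^N$, which has a pole of order $N$ at $s=1$ producing the polynomial-in-$\log$ main term; and, in the absence of an automorphic input, the additive twist $e(bn/p)$ must be unfolded by hand via Hurwitz zeta functions. The statement is a prime-denominator specialization and renormalization of \cite[Theorem 2]{MR1635762}, so the quickest route is simply to cite it and verify the identifications; below I sketch a direct derivation that explains the shape of the formula.

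By Mellin inversion on $w$ and unfolding $d_N(n)=\#\{(n_1,\ldots,n_N):\prod n_j=n\}$, for $\sigma>1$ we have
\[
\sum_{n\geq 1}d_N(n)e\left(\frac{bn}{p}\right)w(n)=\frac{1}{2i\pi}\int_{(\sigma)}\mathcal{M}[w](s)D_b(s)\,ds,
\]
where, upon classifying each $n_j$ by its residue modulo $p$,
\[
D_b(s)=p^{-Ns}\sum_{1\leq a_1,\ldots,a_N\leq p}e\left(\frac{ba_1\cdots a_N}{p}\right)\prod_{j=1}^N\zeta(s,a_j/p).
\]
I would then shift the contour past $s=1$ to a line $\Re(s)=-\sigma'$ with $\sigma'>0$, picking up a single residue at $s=1$ of a pole of order $N$ (each $\zeta(s,a_j/p)$ has a simple pole there with residue $1$). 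Using the Laurent expansion $\zeta(s,\alpha)=(s-1)^{-1}+\sum_{n\geq 0}\gamma_n(\alpha)(s-1)^n$ together with the Taylor expansions of $p^{-Ns}$ and of $\mathcal{M}[w]$ (whose $(k-1)$-th derivative at $s=1$ equals $\int_0^{+\infty}w(x)\log^{k-1}(x)\,dx$), the order-$N$ residue can be computed by the standard multinomial formula; collecting terms by the power of $\log$ that appears identifies the main term exactly as $\frac{1}{p}\int_0^{+\infty}\sum_{k=1}^N\frac{\beta_k(p)}{(k-1)!}\log^{k-1}(x)w(x)\,dx$, with $\beta_k(p)$ given by \eqref{eq_coeff_beta}.

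On the shifted contour $\Re(s)=-\sigma'$, I would apply the Hurwitz-zeta functional equation to each of the $N$ factors $\zeta(s,a_j/p)$, converting the integrand into a sum over dual frequencies $m\in\Z^\ast$ of $d_N(m)$ times gamma factors and complete additive character sums modulo $p$. Combining these character sums with the original twist $e(ba_1\cdots a_N/p)$ telescopes them into a hyper-Kloosterman sum: when all $a_j$ are units, into $K_{N-1}(\bar{b}m,p)$ weighted by $\mathcal{B}[w](m/p^N)$ (after shifting back and undoing the Mellin transform), reproducing the second line of \eqref{eq_voronoi_non_cusp}; when exactly $\ell\in\{1,\ldots,N-2\}$ of the $a_j$ vanish modulo $p$, the character sum degenerates and produces the $\binom{N-1}{\ell}$-weighted contribution with $\mathcal{B}[w](m/p^\ell)$, the binomial counting the positions of the vanishing coordinates. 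The main obstacle is the combinatorial bookkeeping, most notably identifying the residue at $s=1$ with \eqref{eq_coeff_beta}: the multinomial expansion of the order-$N$ pole must be carefully matched against the sum indexed by $n_1+\cdots+n_N=-k-m$ in the definition of $\beta_k(p)$. Invoking \cite[Theorem 2]{MR1635762} and specializing circumvents this detailed calculation.
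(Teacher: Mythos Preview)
Your strategy is the paper's strategy: both cite Ivi\'c's Theorem~2 in \cite{MR1635762} and then specialize to prime modulus. The paper does not go through Hurwitz zeta functions explicitly but instead works directly with Ivi\'c's objects $E_N(s,b/p)$ and the character sums $C_N^{\pm}(\uple{n},b/p)=\sum_{x_1,\dots,x_N\bmod p}e\bigl((n_1x_1+\cdots+n_Nx_N\pm bx_1\cdots x_N)/p\bigr)$; this is exactly what your Hurwitz-zeta functional equations would produce once you reassemble the sum over $a_1,\dots,a_N$. So the two routes coincide.

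One point in your sketch is mislocated. The degenerate contributions in the third line of \eqref{eq_voronoi_non_cusp} do not come from $a_j\equiv 0\bmod p$ on the original side; they come from divisibility of the \emph{dual} variable. Concretely, in the paper's computation, when $p\mid n$ one fixes an index $k_0$ with $p\mid n_{k_0}$, sums over $x_{k_0}$ first, and then an inclusion--exclusion over the remaining $N-1$ coordinates produces the factor $\binom{N-1}{\ell}$. Your explanation (``the binomial counting the positions of the vanishing $a_j$'') would give $\binom{N}{\ell}$, which is wrong. This does not damage the argument, since you correctly fall back on Ivi\'c's theorem for the combinatorics, but the stated heuristic should be amended.
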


\begin{proof}[\proofname{} of proposition \ref{propo_voronoi_non_cusp}]%
  We use the notation of A. Ivi\'{c} in~\cite[Theorem 2]{MR1635762}. Note
  that A. Ivi\'{c} considers the case $N=3$ in \cite[Page
  211]{MR1635762}, but there are a number of typos in this argument.
\par
First, we compute explicitly
\begin{equation*}
\text{res}_{s=1}\mathcal{M}[w](s)E_N\left(s,\frac{b}{p}\right).
\end{equation*}
\par
By \cite[Equations (2.2) and (2.3)]{MR1635762}, we have
\begin{equation}\label{eq_expansion}
  E_N\left(s,\frac{b}{p}\right)=\frac{1}{p}\sum_{k=1}^N\frac{\beta_k(b,p)}{(s-1)^k}+H(s)
\end{equation}
where $H(s)$ is an entire function on $\C$ and $\beta_k(b,p)$ is
defined in \eqref{eq_coeff_beta} for $1\leq k\leq N$. These
coefficients do not depend on $b$ for the following reason. Let us fix
$0\leq m\leq N-k$ and $n_1,\dots,n_N\geq -1$ satisfying
$n_1+\dots+n_N=-k-m$. Obviously, there exists at least one $1\leq
j_0\leq N$ such that $n_{j_0}=-1$, for which
$\gamma_{n_{j_0}}(a_{j_0}/p)=1$. Performing the summation over
$a_{j_0}$ in \eqref{eq_coeff_beta}, one gets
\begin{equation*}
\sum_{1\leq a_1,\dots,a_N\leq p}e\left(\frac{ba_1\dots a_N}{p}\right)\prod_{\substack{1\leq j\leq N \\
j\neq j_0}}\gamma_{n_j}\left(\frac{n_j}{p}\right)=p\sum_{\substack{1\leq a_1,\dots a_{j_0-1},a_{j_0+1},\dots,a_N\leq p \\
p\mid a_1\dots a_{j_0-1}a_{j_0+1}\dots a_N}}\prod_{\substack{1\leq j\leq N \\
j\neq j_0}}\gamma_{n_j}\left(\frac{n_j}{p}\right).
\end{equation*}
This last equality also implies that
\begin{equation*}
\beta_k(b,p)=\beta_k(p)\ll_\epsilon p^\epsilon
\end{equation*}
for $1\leq k\leq N$ and for all $\epsilon>0$. Finally, \eqref{eq_expansion} also implies that this residue equals the first term in \eqref{eq_voronoi_non_cusp}.
\par
Then, let us compute explicitly
\begin{eqnarray*}
A_N\left(n,\frac{b}{p}\right) & = & \frac{1}{2}\left(C_N^{+}\left(n,\frac{b}{p}\right)+C_N^{-}\left(n,\frac{b}{p}\right)\right), \\
B_N\left(n,\frac{b}{p}\right) & = & \frac{1}{2}\left(C_N^{+}\left(n,\frac{b}{p}\right)-C_N^{-}\left(n,\frac{b}{p}\right)\right)
\end{eqnarray*}
for all positive integers $n$, where
\begin{eqnarray*}
C_N^{\pm}\left(n,\frac{b}{p}\right) & = & \sum_{n_1\dots n_N=n}C_N^{\pm}\left(\uple{n},\frac{b}{p}\right), \\
C_N^{\pm}\left(\uple{n},\frac{b}{p}\right) & \coloneqq & \sum_{x_1,\dots, x_N\bmod{p}}e\left(\frac{n_1x_1+\dots n_Nx_N\pm bx_1\dots x_N}{p}\right)
\end{eqnarray*}
where $\uple{n}=(n_1,\dots,n_N)$. Let us fix $n_1,\dots,n_N$ satisfying $n=n_1\dots n_N$.
\par
If $p\nmid n$, then we find
\begin{align}
C_N^{\pm}\left(\uple{n},\frac{b}{p}\right) & =p\sum_{\substack{x_2,\dots,x_N\bmod{p} \\
x_2\dots x_N\equiv\mp\bar{b}n_1\bmod{p}}}e\left(\frac{n_1x_1+\dots n_Nx_N\pm bx_1\dots x_N}{p}\right) \\
& =p^{N/2}K_{N-1}(\mp\bar{b}n,p).  \label{eq_second_term}
\end{align}
\par
On the other hand, if $p\mid n$, then there exists $1\leq k_0\leq N$
such that $p$ divides $n_{k_0}$. Thus,
\begin{align}
C_N^{\pm}\left(\uple{n},\frac{b}{p}\right) & =p\sum_{\substack{x_1,\dots,\widehat{x_{k_0}},\dots,x_N\bmod{p} \\
p\mid x_1\dots\widehat{x_{k_0}}\dots x_N}}e\left(\frac{n_1x_1+\dots+\widehat{n_{k_0}x_{k_0}}+\dots n_Nx_N}{p}\right) \\
& =(-1)^{N-2}p+\sum_{\ell=1}^{N-2}(-1)^{\ell+N-2}p^{\ell+1}\sum_{\substack{1\leq k_1<\dots<k_\ell\leq N \\
\forall 1\leq i\leq \ell, k_i\neq k_0}}\prod_{j=1}^{\ell}\delta_{p\mid n_{k_j}}  \label{eq_third_term}
\end{align}
by a simple induction on $N$, using the notation
$\;\widehat{\cdot}\;$ as usual to omit a term.
\par
The contribution of \eqref{eq_second_term} and of the first term in \eqref{eq_third_term} leads to the second term in \eqref{eq_voronoi_non_cusp}, after a suitable renormalisation of the integral transforms given in \cite[Equations (3.9) and (3.10)]{MR1635762}. The contribution of the other terms in \eqref{eq_third_term} leads to the third term in \eqref{eq_voronoi_non_cusp}.
\end{proof}

We recall from Section~\ref{ssec-dn} that for an invertible residue
class $a$ in $\mathbb{F}_p^\times$, we have
$$
E_{d_N}(X,p,a)=\frac{S_{d_N}(X,p,a)-M_{d_N}(X,p)}{(X/p)^{1/2}},
$$ 
where $S_{d_N}(X,p,a)$ and $M_{d_N}(X,p)$ are defined
in~(\ref{eq-def-dn1}) and~(\ref{eq-def-dn2}).
\par
For $\kappa\geq 1$, we consider
\begin{equation*}
  \mathsf{M}_{d_N}(X,p,\kappa)=\frac{1}{p}\sums_{a\bmod{p}}E_{d_N}(X,p,a)^{\kappa}.
\end{equation*}
\par
As in the case of cusp forms, we denote $Y=X/p^N$. Then, detecting the
congruence $n\equiv a\bmod{p}$ using additive characters and applying
the Vorono\u{\i} summation formula for $d_N$ (Proposition
\ref{propo_voronoi_non_cusp}), we get
\begin{multline}\label{eq_d_N_after_voro}
E_{d_N}(X,p,a)=\frac{1}{\sqrt{Y}}\sum_{m\in\Z^\ast}d_N(m)K_N(-am,p)\mathcal{B}[w]\left(\frac{m}{Y}\right) \\
+\sum_{\ell=1}^{N-2}\binom{N-1}{\ell}\frac{(-1)^{\ell}}{p^{(\ell+1)/2}\sqrt{X/p^\ell}}\sum_{m\in\Z^\ast}d_N(m)\mathcal{B}[w]\left(\frac{m}{X/p^\ell}\right).
\end{multline}
\par
The second term in \eqref{eq_d_N_after_voro} is then seen to be $\ll
p^{-1/2}$. Thus, we have
\begin{multline*}
\mathsf{M}_{d_N}(X,p,\kappa)=\frac{1}{Y^{\kappa/2}}\frac{1}{p}
\sums_{a\bmod{p}}\sum_{1\leq\abs{m_1},\dots,\abs{m_\kappa}<p/2}\prod_{k=1}^\kappa d_N\left(m_k\right)K_N\left(am_k,p\right)\mathcal{B}[w]\left(\frac{m_k}{Y}\right) \\
+O_{\epsilon}\left(\frac{p^\epsilon}{\sqrt{p}}Y^{\kappa/2}+p^{1+\epsilon}\left(\frac{p^{N-1}}{X}\right)^AY^{\kappa/2-1+\epsilon}\right)
\end{multline*}
if $2p^{N-1}<X$, for all $A>1$, by the decay properties of the
generalized Bessel transforms.
\par
Using again the combinatorial identity in Lemma \ref{lemma_combi}, we
rearrange this into
\begin{multline*}
  \mathsf{M}_{d_N}(X,p,\kappa)=\frac{1}{Y^{\kappa/2}}\sum_{\uple{e}\in\left\{\pm
      1\right\}^\kappa}\sum_{s=1}^\kappa\sum_{\sigma\in
    P(\kappa,s)}\sum_{\substack{(j_1,\dots,j_s)\in R^s \\
      \text{distinct}}} \prod_{u=1}^sd_N\left(j_u\right)^{\os}
  \\
  \times
  \mathcal{B}[w]\left(\frac{j_u}{Y}\right)^{\ose{1}}\mathcal{B}[w]\left(\frac{-j_u}{Y}\right)^{\ose{-1}}
  \Bigl(
  \frac{1}{p}\sums_{a\bmod{p}}\prod_{u=1}^sK_N\left(aj_u,p\right)^{\ose{1}}K_N\left(-aj_u,p\right)^{\ose{-1}} \Bigr)\\
  +O_{\epsilon}\left(\frac{p^\epsilon}{\sqrt{p}}Y^{\kappa/2}+p^{1+\epsilon}\left(\frac{p^{N-1}}{X}\right)^AY^{\kappa/2-1+\epsilon}\right),
\end{multline*}
where we use the same notation as in Section~\ref{ssec-reduce-katz}.
\subsection{The combinatorial analysis for $N$ odd}%

Arguing precisely along the same lines as in Sections
\ref{sec_combi_odd} and~\ref{sec_variance_2} (the self-dual, $N$
odd, case), we obtain
\begin{multline*}
  \mathsf{M}_{d_N}(X,p,\kappa)=\delta_{2\mid\kappa}
  \frac{\kappa!}{(\kappa/2)!}
  \left(\frac{1}{Y}\sum_{1\leq
      m<p/2}d_N(m)^2\left\vert\mathcal{B}[w]\left(\frac{m}{Y}\right)\right\vert^2\right)^{\kappa/2}\\
  +O_{\epsilon}\left(\frac{p^\epsilon}{\sqrt{p}}Y^{\kappa/2}+Y^{-1/2+\epsilon}+\frac{1}{\sqrt{p}}Y^{\kappa/2+\epsilon}\right)
\end{multline*}
if $2p^{N-1}<X$. 
Using Proposition \ref{propo_variance_d_N} below and 
$$
\mathcal{M}\left[\left\vert\mathcal{B}[w]\right\vert^2\right](1)
=\frac{\abs{\abs{w}}_2^2}{2} 
$$
(by~(\ref{eq_mellin_odd})), we derive Theorem~\ref{theo_moments_d_N}
for $N$ odd.
\subsection{The combinatorial analysis for $N$ even}%

Arguing precisely along the same lines as in Section
\ref{sec_even_combi} and Section~\ref{sec_variance_4} (the self-dual,
$N$ even, case, with $\lambda=0$ so that $\nu=\kappa$), we get
\begin{multline*}
  \mathsf{M}_{d_N}(X,p,\kappa)=\delta_{2\mid\kappa}\sum_{\uple{e}\in\left\{\pm
      1\right\}^\kappa}\sum_{\substack{\sigma\in P(\kappa,\kappa/2)
      \\\text{focusing}}} \left(\frac{1}{Y}\sum_{1\leq
      m<p/2}d_N(m)^2\mathcal{B}[w]\left(\frac{m}{Y}\right)^2\right)^{\vbe}
  \\
  \times \left(\frac{1}{Y} \sum_{1\leq
      m<p/2}d_N(m)^2\mathcal{B}[w]\left(\frac{-m}{Y}\right)^2\right)^{\kappa/2-\vbe}
  +O_{\epsilon}\left(\frac{p^\epsilon}{\sqrt{p}}Y^{\kappa/2}+Y^{-1/2+\epsilon}+\frac{1}{\sqrt{p}}Y^{\kappa/2+\epsilon}\right)
\end{multline*}
if $2p^{N-1}<X$ where
\begin{equation*}
  \vbe=|\{1\leq u\leq\kappa/2, \left(\ose{-1},\ose{1}\right)=(0,2)\}|
\end{equation*}
\par
Applying twice Proposition \ref{propo_variance_d_N} and
\eqref{eq_mellin_even}, we derive Theorem~\ref{theo_moments_d_N} in
the case $N$ even.
\subsection{Asymptotic expansion}%

This section is the analogue of Section~\ref{sec_variance} for the
divisor functions.
For a smooth function $\mathrm{B}$ and $Y<Z$, let
$$
W(Y,Z)=\frac{1}{Y}\sum_{1\leq  m<Z}
d_N(m)^2\mathrm{B}\Bigl(\frac{m}{Y}\Bigr).
$$

\begin{proposition}\label{propo_variance_d_N}
  We have
\begin{equation*}
  W(Y,Z)=Q(\log{(Y)})+O_{\epsilon, N}
  \left(Z^{\epsilon}\left(\frac{Y}{Z}\right)^A+Y^{-1/2+\epsilon}\right)
\end{equation*}
for all $A>1$, where $Q$ is a polynomial of degree $N^2-1$ given by
\begin{equation}\label{eq_Q_N}
  Q(X)=\sum_{m=0}^{N^2-1}\frac{1}{m!}
  \left(\sum_{\substack{m_1+\dots+m_{N^2}+k+\ell=-1-m \\
        m_1,\dots,m_{N^2}\geq -1 \\
        k,\ell\geq 0}}
    \prod_{j=1}^{N^2}\gamma_{m_j}\frac{H_N^{(k)}(1)\mathcal{M}
      \left[\mathrm{B}\right]^{(\ell)}(1)}{k!\ell!}\right)X^m,
\end{equation}
\begin{equation*}
H_N(s)=\prod_{q\in\mathcal{P}}\left(1-q^{-s}\right)^{(N-1)^2}P_N(q^{-s})
\end{equation*}
and the polynomial $P_N(T)$ is defined in Proposition \ref{propo_d_N_series}. In particular, the leading coefficient of $Q_{N}^{\uple{\epsilon}}[w](X)$ equals
\begin{equation*}
  \frac{H_N(1)\|\mathrm{B}\|^2}{(N^2-1)!}.
\end{equation*}
\par
Moreover we have $H_N(1)>0$.
\end{proposition}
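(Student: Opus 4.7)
The plan is to follow the same strategy as in the proof of Proposition~\ref{propo_variance}, replacing the Rankin–Selberg $L$-function (which has a simple pole) by the Dirichlet series $D_N(s)=\sum_{m\ge 1}d_N(m)^2 m^{-s}$, which has a pole of order $N^2$ at $s=1$. The higher order of the pole is exactly what produces a polynomial of degree $N^2-1$ in $\log Y$ rather than a single constant.

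First I would reduce to the infinite sum. Using $d_N(m)\ll_\epsilon m^\epsilon$, summation by parts and the rapid decay hypothesis on $\mathrm{B}$, the truncation at $Z$ introduces an error $\ll Z^{\epsilon}(Y/Z)^{A}$ for any $A>1$, so up to that error we may study
\begin{equation*}
W^0(Y)=\frac{1}{Y}\sum_{m\ge 1}d_N(m)^2\mathrm{B}\Bigl(\frac{m}{Y}\Bigr)
=\frac{1}{Y}\cdot\frac{1}{2i\pi}\int_{(\sigma)}D_N(s)\,Y^{s}\mathcal{M}[\mathrm{B}](s)\,\mathrm{d}s
\end{equation*}
for $\sigma>1$ by Mellin inversion.

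Next I would identify the analytic structure of $D_N(s)$. Since $d_N$ is multiplicative, there is an Euler product $D_N(s)=\prod_q D_{N,q}(s)$, and using the interpretation of $d_N(q^k)=S_{0,\dots,0,k}(1,\dots,1)$ (via the Schur-polynomial formula~\eqref{eq-schur}) specialized at $x_j=y_j=1$, we get
\begin{equation*}
D_{N,q}(s)=\frac{P_N(q^{-s})}{(1-q^{-s})^{N^2}},
\end{equation*}
so that $D_N(s)=\zeta(s)^{N^2}H_N(s)$ where $H_N(s)=\prod_q (1-q^{-s})^{(N-1)^2}P_N(q^{-s})$ is given by an absolutely convergent Euler product on $\Re(s)>1/2$, hence defines a holomorphic function there. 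The positivity $H_N(1)>0$ follows from Proposition~\ref{propo_schur} applied with all Satake parameters equal to $1$ (which trivially satisfies the smallness condition~\eqref{eq_constraint_t}), each local factor being positive.

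Then I would shift the contour in the integral to $\Re(s)=1/2+\epsilon$, crossing a single pole at $s=1$ of order exactly $N^2$ (coming from $\zeta(s)^{N^2}$; the factors $H_N(s)$ and $\mathcal{M}[\mathrm{B}](s)$ are holomorphic in a neighbourhood of $s=1$ under the hypotheses on $\mathrm{B}$). On the new line, convexity bounds for $\zeta$ and absolute convergence of $H_N$ give the error $O(Y^{-1/2+\epsilon})$. The residue at $s=1$ is computed from the Laurent expansion $\zeta(s)=\sum_{n\ge -1}\gamma_n (s-1)^n$ (with $\gamma_{-1}=1$), so that $\zeta(s)^{N^2}=\sum_{n\ge -N^2}c_n(s-1)^n$ with
\begin{equation*}
c_n=\sum_{\substack{m_1+\dots+m_{N^2}=n\\ m_i\ge -1}}\prod_{j=1}^{N^2}\gamma_{m_j},
\end{equation*}
multiplied by the Taylor expansions of $H_N(s)$ and $\mathcal{M}[\mathrm{B}](s)$ at $s=1$, and by $Y^s=Y\cdot\sum_{m\ge 0}\log^m(Y)(s-1)^m/m!$. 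Collecting the coefficient of $(s-1)^{-1}$ in the product produces exactly the polynomial $Q(\log Y)$ with coefficients as stated in~\eqref{eq_Q_N}, the factor $1/Y$ in front of $W^0$ cancelling the $Y$ from $Y^s$ evaluated at $s=1$.

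The only non-routine step is to verify that the Laurent coefficient extraction gives precisely the stated formula; this is a bookkeeping exercise once the three Taylor/Laurent series have been multiplied. The leading coefficient (of $\log^{N^2-1}Y$) corresponds to $m=N^2-1$, where the only non-vanishing contribution is $m_1=\dots=m_{N^2}=-1$, $k=\ell=0$, giving $H_N(1)\mathcal{M}[\mathrm{B}](1)/(N^2-1)!$. In the application to $\mathrm{B}=|\mathcal{B}[w]|^2$ (and its sign variants), Corollary~\ref{coro_unitary_Bessel} yields $\mathcal{M}[\mathrm{B}](1)=\tfrac12\|w\|_2^2$, whence the leading coefficient claimed in Theorem~\ref{theo_moments_d_N}. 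The main (minor) obstacle will be keeping the combinatorial bookkeeping of the residue computation clean; no deep input is needed beyond the Mellin-analytic factorisation of $D_N(s)$ and the generating series~\eqref{eq-schur}.
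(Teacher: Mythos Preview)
Your approach is essentially identical to the paper's: reduce to the infinite sum, write it as a Mellin integral involving $D_N(s)=\zeta(s)^{N^2}H_N(s)$, shift to $\Re(s)=1/2+\epsilon$, and read off the residue at the order-$N^2$ pole. The residue bookkeeping and the positivity of $H_N(1)$ are handled just as in the paper.

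One small inconsistency: when you specialize the Schur identity~\eqref{eq-schur} at $x_j=y_j=1$, the polynomial in the numerator is $P_N(\uple{1},\uple{1},T)$, not the $P_N(T)$ of Proposition~\ref{propo_d_N_series}; the denominator is $(1-T)^{N^2}$, not $(1-T)^{2N-1}$. These two descriptions agree only via the identity $P_N(\uple{1},\uple{1},T)=(1-T)^{(N-1)^2}P_N(T)$, which you implicitly use but never state. As written, your displayed formula $D_{N,q}(s)=P_N(q^{-s})/(1-q^{-s})^{N^2}$ followed by $H_N(s)=\prod_q(1-q^{-s})^{(N-1)^2}P_N(q^{-s})$ uses the symbol $P_N$ in two incompatible senses. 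The paper avoids this by quoting Proposition~\ref{propo_d_N_series} directly, which already gives $\sum_k d_N(q^k)^2T^k=P_N(T)/(1-T)^{2N-1}$ with the correct $P_N(T)=\sum_{k=0}^{N-1}\binom{N-1}{k}^2T^k$; this also makes the positivity $P_N(q^{-1})>0$ immediate without invoking Proposition~\ref{propo_schur}.
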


\begin{proof}[\proofname{} of proposition \ref{propo_variance}]%
Arguing as in the proof of Proposition \ref{propo_variance}, one gets
\begin{equation}\label{eq_before_shift_d_N}
  W(Y,Z)=
  \frac{1}{Y}\frac{1}{2i\pi}
  \int_{(3)}D_{d_N}(s)
  Y^s\mathcal{M}\left[\mathrm{B}\right](s)\dd s+
  O_{\epsilon, N}\left(Z^{\epsilon}\left(\frac{Y}{Z}\right)^{A}\right)
\end{equation}
for all $A>1$, where
\begin{equation*}
  D_{d_N}(s)\coloneqq
  \sum_{m\geq 1}\frac{d_N(m)^2}{m^s}=
  \prod_{q\in\mathcal{P}}\frac{P_N(q^{-s})}{(1-q^{-s})^{2N-1}}=\zeta(s)^{N^2}H_N(s)
\end{equation*}
defines a meromorphic function on $\Re{(s)}>1/2$ with a pole at $s=1$
of order $N^2$ by Proposition \ref{propo_d_N_series}. The proposition
follows from \eqref{eq_before_shift_d_N} by shifting the contour to
$Re{(s)}=1/2+\epsilon$, hitting the pole at $s=1$.
\par
The fact that $H_N(1)>0$ is clear here, since $P_N(q^{-1})>0$ for all
prime numbers $q$.
\end{proof}
\appendix%
\section{Computation of the residue of Rankin-Selberg $L$-functions}\label{sec_A}%
A formula for the residue of the Rankin-Selberg $L$-function
$L(f\times f^\ast,s)$ in terms of the $L^2$-norm of $f$ is implicit,
but not fully stated in \cite{MR2254662}. For convenience, we give the
details of this computation.

\begin{proposition}\label{propo_residue}
  The residue of $\;L(f\times f^\ast,s)$ at $s=1$ is equal to
\begin{equation*}
r_f=\frac{4\pi^{N^2/2}}{N\fGamma_{\alpha_\infty(f)}}\abs{\abs{f}}^2>0
\end{equation*}
where
\begin{equation*}
\fGamma_{\alpha_\infty(f)}\coloneqq\prod_{1\leq j\leq N}\fGamma\left(\frac{1+2\Re{\left(\alpha_{j,\infty}(f)\right)}}{2}\right)\prod_{1\leq j<k\leq N}\left\vert\fGamma\left(\frac{1+\alpha_{j,\infty}(f)+\overline{\alpha_{k,\infty}(f)}}{2}\right)\right\vert^2
\end{equation*}
and the Petersson norm of $f$ is given by
\begin{equation*}
  \abs{\abs{f}}^2=\int_{SL_N(\Z)z\in
    SL_N(\Z)\setminus\mathbb{H}^N}\left\vert f(z)\right\vert^2\dd^\ast z, 
\end{equation*}
with $\dd^\ast z$ being the $SL_N(\R)$-invariant measure on
$\mathbb{H}^N\simeq SL_N(\R)/SO_N(\R)$ defined in \cite[Proposition
1.5.3]{MR2254662}.
\end{proposition}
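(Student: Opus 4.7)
The plan is to follow the classical Rankin--Selberg unfolding argument for $GL(N)$, as presented in Goldfeld's book \cite{MR2254662}, and then extract the residue at $s=1$ by combining the residue of the Eisenstein series with a careful computation of the archimedean zeta integral.

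First, I would recall the Rankin--Selberg integral representation. Namely, one considers the Eisenstein series $E(z,s)$ twisted by the minimal parabolic (or the appropriate maximal parabolic for the Rankin--Selberg construction) on $SL_N(\mathbb{Z})\backslash \mathbb{H}^N$, and forms the integral
\begin{equation*}
I(s)\coloneqq\int_{SL_N(\Z)\setminus\mathbb{H}^N}\left\vert f(z)\right\vert^2 E(z,s)\,\dd^\ast z.
\end{equation*}
Unfolding against the defining sum for $E(z,s)$ and inserting the Fourier expansion of $f$ and $\bar{f}$ reduces $I(s)$ to a Mellin transform of a product of Whittaker functions integrated against a power function. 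The Dirichlet series that appears is exactly the one defining $L(f\times f^{\ast},s)$ (up to the standard $\zeta(Ns)$ normalization), and the Mellin transform of the Whittaker part produces the full archimedean $L$-factor
\begin{equation*}
L_\infty(f\times f^\ast,s)=\prod_{1\leq j,k\leq N}\fGamma_\R\left(s+\alpha_{j,\infty}(f)+\overline{\alpha_{k,\infty}(f)}\right),
\end{equation*}
by the Stade/Bump archimedean computation. This is precisely the content of \cite[Chapters 10--12]{MR2254662}.

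Next, I would take residues at $s=1$ on both sides. On the spectral side, $E(z,s)$ has a simple pole at $s=1$ whose residue is a constant (depending on the chosen normalization), and the projection of $|f|^2$ onto this residual spectrum gives a multiple of $\|f\|^2$; the exact constant involves a factor of $1/N$ (coming from the volume of the residual Eisenstein contribution, see \cite[Proposition 10.4.3]{MR2254662} or the analogous computation for the minimal parabolic). On the arithmetic side, the pole of $L(f\times f^{\ast},s)$ at $s=1$ is simple, with residue $r_f$, multiplied by the archimedean factor $L_\infty(f\times f^{\ast},1)$ evaluated at $s=1$. Equating the two expressions gives
\begin{equation*}
r_f \cdot L_\infty(f\times f^\ast,1) = c_N \|f\|^2,
\end{equation*}
for an explicit constant $c_N$ involving $\pi$'s and the factor $1/N$.

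The main point then is to identify the archimedean factor at $s=1$ with the expression $\fGamma_{\alpha_\infty(f)}$ (up to an explicit power of $\pi$). Using the unitarity relation~(\ref{eq_unitaricity_2}), namely $\overline{\alpha_{k,\infty}(f)}=\alpha_{k,\infty}(f^\ast)=-\alpha_{k,\infty}(f)$ in the tempered case and the symmetric formulation in general, together with $\fGamma_\R(s)=\pi^{-s/2}\fGamma(s/2)$, the diagonal terms $j=k$ produce
\begin{equation*}
\prod_j\fGamma_\R(1+2\Re(\alpha_{j,\infty}(f)))=\pi^{-N/2}\pi^{-\sum_j\Re(\alpha_{j,\infty}(f))}\prod_j\fGamma\left(\tfrac{1+2\Re(\alpha_{j,\infty}(f))}{2}\right),
\end{equation*}
and, using~(\ref{eq_sum_alpha}), the exponent of $\pi$ simplifies. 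The off-diagonal terms pair into complex-conjugate pairs, yielding the squared absolute values $|\fGamma((1+\alpha_{j,\infty}(f)+\overline{\alpha_{k,\infty}(f)})/2)|^2$. Counting the total exponent of $\pi$ gives $-N^2/2$, so after moving to the right-hand side we obtain the advertised $\pi^{N^2/2}$. Finally, the positivity $r_f>0$ is immediate from the formula, since $\|f\|^2>0$ and $\fGamma_{\alpha_\infty(f)}>0$ (each factor being a value of $\fGamma$ at a positive real point, using~(\ref{eq_Selberg}) to ensure all arguments have positive real part).

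The hard part here is really the bookkeeping of normalizations: the choice of Eisenstein series in \cite{MR2254662} (minimal vs maximal parabolic, adelic vs classical), the precise residue $1/N$, and the careful matching of $\pi$ powers between the adelic $\fGamma_\R$'s and the classical $\fGamma$'s in the statement. No new ideas are required beyond what is in \cite[Chapters 7, 10, 12]{MR2254662}, but every constant must be tracked faithfully.
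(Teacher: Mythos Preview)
Your approach is essentially the same as the paper's: both use the Rankin--Selberg integral representation from \cite[Chapter 12]{MR2254662}, evaluate the archimedean integral at $s=1$ via Stade's formula together with the unitarity relation~(\ref{eq_unitaricity_2}), and then take the residue of the maximal parabolic Eisenstein series. The one point the paper makes explicit that you leave as bookkeeping is the residue of the completed Eisenstein series $E_P^\ast(z,s)$ at $s=1$: Goldfeld's \cite[Proposition 10.7.5]{MR2254662} asserts the pole but does not compute its value, so the paper carries out a short Poisson summation argument to obtain the residue $2/N$ (not just ``a factor of $1/N$''), which is where the $4/N$ in the final formula ultimately comes from.
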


\begin{proof}[\proofname{} of proposition \ref{propo_residue}]%
The last equation Page 369 in the proof of \cite[Theorem 12.1.4]{MR2254662} tells us that
\begin{equation}\label{eq_res_1}
\scal{f\bar{f}}{\pi^{-N\bar{s}/2}\fGamma(N\bar{s}/2)\zeta(N\bar{s})E_P(\ast,\bar{s})}=\pi^{-Ns/2}\fGamma(Ns/2)G_{\nu(f)}(s)L(f\times f^\ast,s)
\end{equation}
where $E_P(z,\bar{s})$ is the maximal parabolic Eisenstein series defined in \cite[Equation (10.7.1)]{MR2254662} and
\begin{equation*}
G_{\nu(f)}(s)=\int_{y_1,\dots,y_{N-1}=0}^{+\infty}\left\vert W_{\text{Jacquet}}(y,\nu(f),\psi_{1,\dots,1})\right\vert^2\prod_{j=1}^{N-1}y_j^{(N-j)s}\prod_{j=1}^{N-1}y_j^{-j(N-j)}\frac{\dd y_j}{y_j}
\end{equation*}
where $W_{\text{Jacquet}}$ stands for the Jacquet Whittaker function defined in \cite[Equation (5.5.1)]{MR2254662}. In particular,
\begin{align}
G_{\nu(f)}(1) & =\int_{y_1,\dots, y_{N-1}=0}^{+\infty}W_{\text{Jacquet}}(y,\nu(f),\psi_{1,\dots,1})W_{\text{Jacquet}}(y,\nu(f^\ast),\psi_{1,\dots,1}) \\
& \times \prod_{j=1}^{N-1}y_j^{N-j}\prod_{j=1}^{N-1}y_j^{-j(N-j)}\frac{\dd y_j}{(y_j} \\
& =\frac{1}{2\pi^{N(N-1)/2}\fGamma(N/2)}\prod_{1\leq j, k\leq N}\fGamma\left(\frac{1+\alpha_{j,\infty}(f)+\alpha_{k,\infty}(f^\ast)}{2}\right) \\
& =\frac{1}{2\pi^{N(N-1)/2}\fGamma(N/2)}\prod_{1\leq j\leq N}\fGamma\left(\frac{1+2\Re{\left(\alpha_{j,\infty}(f)\right)}}{2}\right) \\
& \times\prod_{1\leq j<k\leq N}\left\vert\fGamma\left(\frac{1+\alpha_{j,\infty}(f)+\overline{\alpha_{k,\infty}(f)}}{2}\right)\right\vert^2\label{eq_res_2}
\end{align}
by Stade's formula (\cite[Proposition 11.6.17]{MR2254662}) and \eqref{eq_unitaricity_2}. By \cite[Proposition 10.7.5]{MR2254662}, $s\mapsto\pi^{-Ns/2}\fGamma(Ns/2)\zeta(Ns)E_P(z,s)\coloneqq E_P^\ast(z,s)$ has a simple pole at $s=1$ but the accurate value of this residue is not computed. Let us show quickly that
\begin{equation*}
\text{res}_{s=1}E_P^\ast(z,s)=2/N
\end{equation*}
which concludes the proof. The last equation in the proof of \cite[Theorem 10.7.5]{MR2254662} tells us that
\begin{equation*}
E_P^\ast(z,s)=\det(z)^s\int_{u=0}^{+\infty}\left[\sum_{\uple{a}\in\Z^N}f_u(\uple{a}z)-1\right]u^{Ns/2}\frac{\dd u}{u}
\end{equation*}
where
\begin{eqnarray*}
f_u(\uple{x})\coloneqq e^{-\pi(x_1^2+\dots+x_N^2)u} & \rightsquigarrow & \widehat{f_u}(\uple{x})=u^{-N/2}f_{1/u}(\uple{x})
\end{eqnarray*}
for $u>0$. Breaking the $u$-integral into two parts $[0,1]$ and $[1,+\infty[$, changing the variable $u\mapsto 1/u$ in the second part and applying the Poisson summation formula given in \cite[Equation (10.7.2)]{MR2254662}, one gets
\begin{multline*}
E_P^\ast(z,s)=\det(z)^s\frac{-2/N}{s}+\det(z)^{s-1}\frac{2/N}{s-1}+\int_{u=1}^{+\infty}\left[\sum_{\uple{a}\in\Z^N}f_u(\uple{a}z)-1\right]u^{Ns/2}\frac{\dd u}{u} \\
+\int_{u=1}^{+\infty}\left[\sum_{\uple{a}\in\Z^N}f_u(\uple{a}\mathstrut^tz^{-1})-1\right]u^{N(1-s)/2}\frac{\dd u}{u}.
\end{multline*} 
\end{proof}
\section{Generating series involving Schur polynomials}\label{sec_B}%

Our goal here is to state a precise form of an identity involving
Schur polynomials that we used in Section~\ref{sec_variance}.

\begin{notations}
  The following notations will be used throughout this section. For
  $\uple{k}=(k_1,\dots,k_N)$ a $N$-tuple of
  non-negative integers and $\uple{x}=(x_1,\dots,x_N)$ a $N$-tuple of
  indeterminates, we define the $N\times N$ matrix
\begin{equation*}
\uple{x}(\uple{k})=\left[x_j^{k_i}\right]_{1\leq i,j\leq N},
\end{equation*} 
and note that
\begin{equation*}
\det\left(\uple{x}(\uple{k})\right)=\sum_{\sigma\in\mathcal{\sigma}_N}\epsilon(\sigma)x_{\sigma(1)}^{k_1}\dots x_{\sigma(N)}^{k_N}
\end{equation*}
vanishes if two components of $\uple{k}$ match. Recall that $e_m$ stands for the $m$'th elementary symmetric polynomial defined in \eqref{eq_alpha_sym}.
\end{notations}

We will prove:

\begin{proposition}\label{propo_schur}
  Let $N\geq 2$, $\uple{x}=(x_1,\dots,x_N)$,
  $\uple{y}=(y_1,\dots,y_N)$ and $T$ be indeterminates.
\par
\emph{(1)} One has
\begin{equation*}
\sum_{k\geq 0}S_{0,\dots,0,k}(\uple{x})S_{0,\dots,0,k}(\uple{y})T^k=\frac{P_N(\uple{x},\uple{y},T)}{\prod_{1\leq j,k\leq N}\left(1-x_jy_kT\right)}
\end{equation*}
for some polynomial
$P_N(\uple{x},\uple{y},T)\in\Z[\uple{x},\uple{y},T]$.
\par
\emph{(2)} If
\begin{equation}\label{eq_constraint_t}
0<t<\min_{1\leq j,k\leq N}\frac{1}{\abs{x_j}\abs{x_k}}
\end{equation}
then $P_N(\uple{x},\overline{\uple{x}},t)>0$.
\par
\emph{(3)} We have the formula
\begin{multline*}
  P_N(\uple{x},\uple{y},T)=1+\sum_{m=2}^{N(N-1)}(-T)^m\sum_{k=1}^{\min{(m,N-1)}}\sum_{\substack{1\leq m_1,\dots, m_k\leq\min{(m,N)} \\
      m_1+\dots+m_k=m}} \\
  \times\prod_{j=1}^ke_{m_j}(\uple{y})\sum_{2\leq J_1<\dots<J_k\leq
    N}\widetilde{S}_{(m_1,\dots,m_k)}^{(J_1,\dots,J_k)}(\uple{x})
\end{multline*}
with
\begin{equation*}
\widetilde{S}_{(m_1,\dots,m_k)}^{(J_1,\dots,J_k)}(\uple{x})=\frac{1}{V(\uple{x})}\det\left(\begin{array}{c}
 \\
 \\
J_1\to \\
 \\
J_k\to \\
 \\
 \end{array}\begin{pmatrix}
x_1^{N-1} & \dots & x_N^{N-1} \\
\vdots & \vdots & \vdots \\
x_1^{N-J_1+m_1} & \dots & x_N^{N-J_1+m_1} \\
\vdots & \vdots & \vdots \\
x_1^{N-J_k+m_k} & \dots & x_N^{N-J_k+m_k} \\
\vdots & \vdots & \vdots \\
1 & \vdots & 1
\end{pmatrix}\right)
\end{equation*}
for $1\leq k\leq N$, $2\leq J_1<\dots<J_k\leq N$ and $m_1,\dots,m_k\geq 1$. 
\end{proposition}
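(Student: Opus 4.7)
The plan is to deduce parts (1) and (2) from Cauchy's identity together with a partial fraction (residue) calculation, while part (3) will require a more involved combinatorial identification. The crucial initial observation is that, by the determinantal formula \eqref{eq_def_schur} applied to a one-row partition, one has $S_{0,\dots,0,k}(\uple{x}) = h_k(\uple{x})$, the $k$-th complete homogeneous symmetric polynomial.

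For part (1), I would start from the integral representation $h_k(\uple{x}) = \frac{1}{2i\pi}\oint t^{-k-1}\prod_j(1-x_jt)^{-1}\,dt$ around a small contour around $0$, which gives
$$\sum_{k\geq 0}h_k(\uple{x})h_k(\uple{y})T^k = \frac{1}{2i\pi}\oint \frac{t^{N-1}\,dt}{\prod_j(1-x_jt)\prod_k(t-y_kT)}.$$
Evaluating this by summing residues at the simple poles $t=y_mT$ (the only ones inside the contour for $T$ formally small) yields
$$\sum_{k\geq 0}h_k(\uple{x})h_k(\uple{y})T^k = \sum_{m=1}^N \frac{y_m^{N-1}}{\prod_j(1-x_jy_mT)\prod_{k\neq m}(y_m-y_k)}.$$
Clearing the common denominator $\prod_{j,k}(1-x_jy_kT)$ then produces the explicit expression
$$P_N(\uple{x},\uple{y},T) = \sum_{m=1}^N\frac{y_m^{N-1}\prod_{j,\,k\neq m}(1-x_jy_kT)}{\prod_{k\neq m}(y_m-y_k)},$$
which is manifestly a polynomial in $T$, $\uple{x}$ and $\uple{y}$, of degree at most $N(N-1)$ in $T$.

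For part (2), specialising $\uple{y}=\overline{\uple{x}}$ makes the left-hand side of the generating identity equal to $\sum_{k\geq 0}|h_k(\uple{x})|^2 t^k\geq 1 > 0$ (the $k=0$ term is $1$). On the right-hand side, pairing the factors $(j,k)$ and $(k,j)$ in the denominator gives
$$\prod_{j,k}(1-x_j\overline{x_k}t) = \prod_{j=1}^N(1-|x_j|^2 t)\cdot\prod_{1\leq j<k\leq N}|1-x_j\overline{x_k}t|^2,$$
which is strictly positive under the constraint~\eqref{eq_constraint_t}, since each factor $1-|x_j|^2t$ is then positive and each $1-x_j\overline{x_k}t$ is nonzero. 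Multiplying these two positive quantities yields $P_N(\uple{x},\overline{\uple{x}},t)>0$.

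For part (3), exploiting the $\uple{x}\leftrightarrow\uple{y}$ symmetry of the generating series, I would rewrite the formula from (1) as
$$P_N(\uple{x},\uple{y},T) = \sum_{m=1}^N\frac{x_m^{N-1}\prod_{j,\,k\neq m}(1-y_jx_kT)}{\prod_{k\neq m}(x_m-x_k)},$$
and expand each factor $\prod_j(1-y_jx_kT) = \sum_{\ell=0}^N(-x_kT)^{\ell}e_\ell(\uple{y})$. Collecting coefficients of $T^m$ and regrouping by the set $\{J_1<\dots<J_k\}\subset\{2,\dots,N\}$ of indices contributing a positive exponent, together with the composition $(m_1,\dots,m_k)$ of these positive exponents, produces the factor $\prod_i e_{m_i}(\uple{y})$ and a weighted sum
$$\sum_{m\notin\{J_1,\dots,J_k\}}\frac{x_m^{N-1}\prod_{i=1}^k x_{J_i}^{m_i}}{\prod_{\ell\neq m}(x_m-x_\ell)}.$$
This last sum may then be identified, via the Jacobi bialternant formula, with a ratio of two alternants whose denominator is $V(\uple{x})$, and which matches $\widetilde{S}^{(J_1,\dots,J_k)}_{(m_1,\dots,m_k)}(\uple{x})$ after re-ordering rows and taking into account the shifted exponents $N-J_i+m_i$. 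The main obstacle is the careful combinatorial bookkeeping required here: one has to match rows of the alternant with the displaced exponents, track the signs produced by row permutations, and identify the ``column of $1$'s'' role of the excluded index $J_i=1$. The Lagrange identities $\sum_m y_m^{N-1+s}/\prod_{k\neq m}(y_m-y_k)=h_s(\uple{y})$ for $s\geq 0$ (easily proved by partial fractions of $\prod_k(1-y_kT)^{-1}$) give the vanishing of the constant term on the right of (3) above degree $0$ and of the $T^1$-coefficient, providing a useful sanity check on the rearrangement.
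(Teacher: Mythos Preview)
Your treatment of parts (1) and (2) is correct and takes a genuinely different route from the paper. The paper expands both Schur polynomials via the bialternant formula \eqref{eq_def_schur}, sums the double permutation series, and recognises the result as $Q(\uple{x},\uple{y},T)/V(\uple{x})$ for an explicit alternating sum $Q$, whereas you obtain the equivalent closed form
\[
P_N(\uple{x},\uple{y},T)=\sum_{m=1}^N\frac{x_m^{N-1}\prod_{k\neq m}\prod_j(1-y_jx_kT)}{\prod_{k\neq m}(x_m-x_k)}
\]
by a residue computation. (Your ``manifestly a polynomial'' needs one more sentence: polynomiality in $T$ and $\uple{x}$ is clear, while polynomiality in $\uple{y}$ follows either from the symmetric Lagrange-interpolation cancellation of the apparent poles $y_m=y_k$, or from reading off each $T$-coefficient on the generating-series side.) Your argument for (2) is essentially the same as the paper's.

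For part (3), however, there is a genuine gap. In your partial-fraction formula the index that is excluded is $m$, and the indices $k$ carrying a positive exponent after expanding $\prod_j(1-y_jx_kT)$ range over $\{1,\dots,N\}\setminus\{m\}$, not over $\{2,\dots,N\}$. When you swap the order of summation you correctly obtain terms of the shape
\[
\Bigl(\prod_{i=1}^k x_{J_i}^{m_i}\Bigr)\sum_{m\notin\{J_1,\dots,J_k\}}\frac{x_m^{N-1}}{\prod_{\ell\neq m}(x_m-x_\ell)},
\]
but here $\{J_1,\dots,J_k\}$ is a subset of $\{1,\dots,N\}$ of \emph{variable} indices, and each such term is \emph{not} symmetric in $\uple{x}$. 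It therefore cannot coincide with the symmetric polynomial $\widetilde{S}^{(J_1,\dots,J_k)}_{(m_1,\dots,m_k)}(\uple{x})$ of the statement, where the $J_i$ are \emph{row positions} in $\{2,\dots,N\}$ of a modified Vandermonde alternant. Your proposed identification ``via the Jacobi bialternant formula'' does not go through as written, because you have already collapsed the alternant to a cofactor sum and lost the row-index structure.

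The fix is to undo this collapse before expanding: multiply your partial-fraction expression by $V(\uple{x})$ and recognise (by the cofactor identity) that
\[
V(\uple{x})\,P_N(\uple{x},\uple{y},T)=\sum_{\sigma\in\mathfrak{S}_N}\epsilon(\sigma)\,x_{\sigma(1)}^{N-1}\cdots x_{\sigma(N-1)}\prod_{j=2}^{N}\prod_{k=1}^{N}(1-x_{\sigma(j)}y_kT).
\]
Now expand the product over $j\geq 2$ and group by the set of \emph{positions} $2\leq J_1<\cdots<J_k\leq N$ at which a positive power of $T$ is picked up; the outer $\sum_\sigma\epsilon(\sigma)$ then reassembles into exactly the modified alternant defining $\widetilde{S}^{(J_1,\dots,J_k)}_{(m_1,\dots,m_k)}$. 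This is precisely the paper's computation, and it also makes transparent why the $m=1$ coefficient vanishes (two equal rows).
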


\begin{remark}\label{rem_N=2_N=3}
For example, for $N=2$,
\begin{equation*}
P_2(\uple{x},\uple{y},T)=1-e_2(\uple{x})e_2(\uple{y})T^2
\end{equation*}
whereas for $N=3$,
\begin{multline*}
P_3(\uple{x},\uple{y},T)=1-e_2(\uple{x})e_2(\uple{y})T^2 \\
+\left(e_3(\uple{x})\sum_{1\leq k_1\neq k_2\leq 3}y_{k_1}y_{k_2}^2+e_3(\uple{y})\sum_{1\leq j_1\neq j_2\leq 3}x_{j_1}x_{j_2}^2+4e_3(\uple{x})e_3(\uple{y})\right)T^3 \\
-e_1(\uple{x})e_3(\uple{x})e_1(\uple{y})e_3(\uple{y})T^4+e_3(\uple{x})^2e_3(\uple{y})^2T^6.
\end{multline*}
\end{remark}

\begin{remark}
  In general,
  $\widetilde{S}_{(m_1,\dots,m_k)}^{(J_1,\dots,J_k)}(\uple{x})$ can be
  related to a Schur polynomial as follows. Let us assume that $J_k<N$
  (a similar process also works if $J_k=N$) for simplicity. The finite
  sequence
\begin{equation*}
\left(-1,\dots,-J_1+m_1,\dots,-J_k+m_k,\dots,-(N-1)\right)\coloneqq(u_1,\dots,u_{N-1})
\end{equation*}
of length $N-1$ can be ordered increasingly as
\begin{equation*}
u_{\tau(1)}\leq\dots\leq u_{\tau(N-1)}
\end{equation*}
where $\tau$ is the appropriate permutation in $\sigma_{N-1}$. One can check that
\begin{equation*}
\widetilde{S}_{(m_1,\dots,m_k)}^{(J_1,\dots,J_k)}(\uple{x})=\epsilon(\tau)S_{\left(u_{\tau(1)}+N-1,u_{\tau(2)}-u_{\tau(1)}-1,\dots,u_{\tau(N-1)}-u_{\tau(N-2)}-1\right)}(\uple{x}).
\end{equation*}
\end{remark}

\begin{proof}[\proofname{} of proposition \ref{propo_schur}]%
Let us denote by $\Sigma$ the generating series. By \eqref{eq_def_schur},
\begin{align*}
\Sigma & =\frac{1}{V(\uple{x})V(\uple{y})}\sum_{k\geq 0}\det\left(\uple{x}(N-1+k,N-2,\dots,1,0)\right)\det\left(\uple{y}(N-1+k,N-2,\dots,1,0)\right)T^k \\
& =\frac{1}{V(\uple{x})V(\uple{y})}\sum_{(\sigma,\tau)\in\sigma_N^2}\epsilon(\sigma\tau)\left(x_{\sigma(1)}y_{\tau(1)}\right)^{N-1}\dots x_{\sigma(N-1)}y_{\tau(N-1)}\sum_{k\geq 0}\left(x_{\sigma(1)}y_{\tau(1)}T\right)^k \\
& =\frac{1}{V(\uple{x})V(\uple{y})}\sum_{(\sigma,\tau)\in\sigma_N^2}\epsilon(\sigma\tau)\frac{\left(x_{\sigma(1)}y_{\tau(1)}\right)^{N-1}\dots x_{\sigma(N-1)}y_{\tau(N-1)}}{1-x_{\sigma(1)}y_{\tau(1)}T} \\
& =\frac{1}{V(\uple{x})V(\uple{y})}\sum_{\sigma\in\sigma_N}\epsilon(\sigma)x_{\sigma(1)}^{N-1}\dots x_{\sigma(N-1)}F(\uple{y},x_{\sigma(1)}T)
\end{align*}
where
\begin{equation*}
F(\uple{y},Z)=\sum_{\tau\in\sigma_N}\epsilon(\tau)\frac{y_{\tau(1)}^{N-1}\dots y_{\tau(N-1)}}{1-y_{\tau(1)}Z}
\end{equation*}
where $Z$ is an indeterminate. One has
\begin{align*}
F(\uple{y},Z) & =\frac{1}{\prod_{1\leq k\leq N}\left(1-y_kZ\right)}\sum_{\tau\in\sigma_N}\epsilon(\tau)y_{\tau(1)}^{N-1}\dots y_{\tau(N-1)}\prod_{k=2}^N\left(1-y_{\tau(k)}Z\right) \\
& =\frac{V(\uple{y})}{\prod_{1\leq k\leq N}\left(1-y_kZ\right)}
\end{align*}
by Lemma \ref{lemma_vandermonde} below. Thus,
\begin{align*}
\Sigma & =\frac{1}{V(\uple{x})}\sum_{\sigma\in\sigma_N}\epsilon(\sigma)\frac{x_{\sigma(1)}^{N-1}\dots x_{\sigma(N-1)}}{\prod_{1\leq k\leq N}\left(1-y_kx_{\sigma(1)}T\right)} \\
& =\frac{1}{\prod_{1\leq j,k\leq N}\left(1-x_jy_kT\right)V(\uple{x})}\sum_{\sigma\in\sigma_N}\epsilon(\sigma)x_{\sigma(1)}^{N-1}\dots x_{\sigma(N-1)}\prod_{\substack{2\leq j\leq N \\
1\leq k\leq N}}\left(1-x_{\sigma(j)}y_kT\right) \\
& \coloneqq\frac{1}{\prod_{1\leq j,k\leq N}\left(1-x_jy_kT\right)V(\uple{x})}Q(\uple{x},\uple{y},T)
\end{align*}
\par
As a function of $\uple{x}$, $Q(\uple{x},\uple{y},T)$ is a
skew-symmetric polynomial. As such, $V(\uple{x})\mid
Q(\uple{x},\uple{y},T)$. For $\sigma\in\sigma_N$, Let say that the
quantities $x_{\sigma(j)}y_k$ ($2\leq j\leq N$, $1\leq k\leq N$) are
ordered lexicographically, namely
\begin{equation*}
\left(j_1,k_1\right)<\left(j_2,k_2\right) \text{ if } j_1<j_2 \text{ or } j_1=j_2 \text{ and } k_1<k_2.
\end{equation*}
\par
Once again,
\begin{equation*}
\prod_{\substack{2\leq j\leq N \\
1\leq k\leq N}}\left(1-x_{\sigma(j)}y_kT\right)=1+\sum_{m=1}^{N(N-1)}(-1)^me_m\left(\left\{x_{\sigma(j)}y_k, 2\leq j\leq N, 1\leq k\leq N\right\}\right)T^m
\end{equation*}
where
\begin{equation*}
e_m\left(\left\{x_{\sigma(j)}y_k, 2\leq j\leq N, 1\leq k\leq N\right\}\right)\coloneqq\sum_{\substack{2\leq j_1,\dots, j_m\leq N \\
1\leq k_1,\dots, k_m\leq N \\
\left(j_1,k_1\right)<\dots<\left(j_m,k_m\right)}}x_{\sigma(j_1)}y_{k_1}\dots x_{\sigma(j_m)}y_{k_m}.
\end{equation*}
\par
The condition $\left(j_1,k_1\right)<\dots<\left(j_m,k_m\right)$ is equivalent to saying that there exists $1\leq k\leq\min{(m,N-1)}$ and some positive integers $1\leq m_j\leq\min{(m,N)}$ ($1\leq j\leq k$) satisfying
\begin{equation*}
\sum_{j=1}^k m_j=m
\end{equation*}
and
\begin{eqnarray*}
j_1=\dots=j_{m_1}\coloneqq J_1 & \text{and} & 1\leq k_1<\dots<k_{m_1}\leq N, \\
j_{m_1+1}=\dots=j_{m_1+m_2}\coloneqq J_2 & \text{and} & 1\leq k_{m_1+1}<\dots<k_{m_1+m_2}\leq N, \\
\vdots & \vdots & \vdots \\
j_{m_1+\dots+m_{k-1}}=\dots=j_{m_1+\dots m_k}\coloneqq J_k & \text{and} & 1\leq k_{m_1+\dots+m_{k-1}}<\dots<k_{m_1+\dots+m_k}\leq N
\end{eqnarray*}
with $2\leq J_1<\dots<J_k\leq N$. Consequently,
\begin{multline*}
Q(\uple{x},\uple{y},T)=V(\uple{x})+\sum_{m=1}^{N(N-1)}(-T)^m\sum_{k=1}^{\min{(m,N-1)}}\sum_{\substack{1\leq m_1,\dots, m_k\leq\min{(m,N)} \\
m_1+\dots+m_k=m}} \\
\times\prod_{j=1}^ke_{m_j}(\uple{y})\sum_{2\leq J_1<\dots<J_k\leq N}\sum_{\sigma\in\sigma_N}\epsilon(\sigma)x_{\sigma(1)}^{N-1}\dots x_{\sigma(N-1)}x_{\sigma(J_1)}^{m_1}\dots x_{\sigma(J_k)}^{m_k}.
\end{multline*} 
\par
We now check that coefficient of $T$ in the previous equation is
$0$. This coefficient equals $-e_1(\uple{y})$ times the determinant of
the matrix
\begin{equation*}
\begin{array}{c}
 \\
 \\
J_1\to \\
 \\
 \end{array}\begin{pmatrix}
x_1^{N-1} & \dots & x_N^{N-1} \\
\vdots & \vdots & \vdots \\
x_1^{N-J_1+1} & \dots & x_N^{N-J_1+1} \\
\vdots & \vdots & \vdots \\
1 & \vdots & 1
\end{pmatrix}.
\end{equation*}
\par
We note that the $(J_1-1)$-th and the $J_1$-th rows of this matrix are
equal, and therefore its determinant vanishes.
\par
Finally, we prove the positivity of
$P_N(\uple{x},\overline{\uple{x}},t)$ if $t$ satisfies
\eqref{eq_constraint_t}. We have
\begin{multline*}
  0<1+\sum_{k\geq 1}\left\vert S_{0,\dots,0,k}(\uple{x})\right\vert^2t^k=P_N(\uple{x},\overline{\uple{x}},t) \\
  \times\left(\prod_{1\leq j\leq
      N}\left(1-\abs{x_j}^2t\right)\prod_{1\leq j<k\leq
      N}\left(1-2\Re{\left(x_j\overline{x_k}\right)}t+\abs{x_j}^2\abs{x_k}^2t^2\right)\right)^{-1}.
\end{multline*}
\par
The denominator in the previous equation is a positive real number
when the constraint \eqref{eq_constraint_t} is satisfied.
\end{proof}

We used the following elementary observation:

\begin{lemma}\label{lemma_vandermonde}
Let $Y$ be an indeterminate and $\uple{x}=(x_1,\dots,x_N)$. We have
\begin{equation*}
\left\vert\begin{array}{ccc}
x_1^{N-1} & \dots & x_N^{N-1} \\
x_1^{N-2}(1-x_1Y) & \dots & x_N^{N-2}(1-x_NY) \\
\vdots & \vdots & \vdots \\
x_1(1-x_1Y) & \dots & x_N(1-x_NY) \\
1-x_1Y & \dots & 1-x_NY
\end{array}\right\vert=V(\uple{x}).
\end{equation*}
\end{lemma}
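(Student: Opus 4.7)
The plan is to reduce the matrix in the statement to the standard Vandermonde matrix by elementary row operations that do not affect the determinant, and then invoke the classical Vandermonde identity.

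Let $V_i=(x_j^{N-i})_{1\leq j\leq N}$ denote the $i$-th row of the standard Vandermonde matrix, whose determinant is equal to $V(\uple{x})$. Let $R_1,\dots,R_N$ denote the rows of the matrix in the statement. Then $R_1=V_1$ and, expanding the factor $(1-x_jY)$, for every $i\in\{2,\dots,N\}$ one gets
$$
R_i=\bigl(x_j^{N-i}-Y x_j^{N-i+1}\bigr)_{1\leq j\leq N}=V_i-Y V_{i-1}.
$$

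Starting from the top, I would then perform the following row operations in sequence: replace $R_2$ by $R_2+YR_1$, which yields $V_2$; next, replace $R_3$ by $R_3+Y(\text{new }R_2)$, which yields $V_3$; and so on, at each stage replacing $R_i$ by $R_i+Y(\text{new }R_{i-1})$ to produce $V_i$. After all these operations the matrix becomes the standard Vandermonde matrix, and since none of these elementary operations changes the determinant, the claimed equality follows from the classical Vandermonde identity.

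There is essentially no obstacle here: the whole argument rests on the telescoping structure $R_i=V_i-YV_{i-1}$, which produces a clean cascade of row operations. The only thing to be careful about is performing the operations strictly top-down so that, when working on $R_i$, the row $R_{i-1}$ already equals $V_{i-1}$.
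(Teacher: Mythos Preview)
Your proof is correct and is a cleaner, more direct argument than the one in the paper. The paper expands the determinant over permutations, writes $\prod_{\ell=2}^N(1-x_{\sigma(\ell)}Y)$ as a polynomial in $Y$ via elementary symmetric polynomials, and then checks that each coefficient of $Y^m$ with $m\geq 1$ is a determinant with two equal rows and hence vanishes, leaving only the constant term $V(\uple{x})$. Your observation $R_i=V_i-YV_{i-1}$ and the top-down cascade of row replacements achieve the same cancellation in one stroke, with no expansion or case analysis. The paper's expansion is perhaps more in the spirit of the surrounding computation in the proof of Proposition~\ref{propo_schur}, where the same kind of symmetric-function bookkeeping reappears, but as a self-contained proof of this lemma your row-operation argument is both shorter and more transparent.
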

\begin{proof}[\proofname{} of lemma \ref{lemma_vandermonde}]%
Of course, the determinant in the previous lemma equals
\begin{equation*}
\sum_{\sigma\in\mathcal{\sigma}_N}\epsilon(\sigma)x_{\sigma(1)}^{N-1}\dots x_{\sigma(N-1)}\prod_{\ell=2}^N\left(1-x_{\sigma(\ell)}Y\right).
\end{equation*}
Then,
\begin{equation*}
\prod_{\ell=2}^N\left(1-x_{\sigma(\ell)}Y\right)=1+\sum_{m=1}^{N-1}(-1)^me_m\left(x_{\sigma(2)},\dots,x_{\sigma(N)}\right)Y^m
\end{equation*}
where $e_m$ is the $m$'th elementary symmetric polynomial defined in \eqref{eq_alpha_sym}. Thus, the previous determinant equals
\begin{multline*}
\sum_{m=1}^{N-1}(-Y)^m\sum_{2\leq j_1<\dots<j_m\leq N}\det\left(\uple{x}(N-1,N-2+\epsilon_2(\uple{j}),\dots,1+\epsilon_{N-1}(\uple{j}),\epsilon_N(\uple{j}))\right) \\
+\det\left(\uple{x}(N-1,N-2,\dots,0)\right)
\end{multline*}
where
\begin{equation*}
\epsilon_\ell(\uple{j})\coloneqq\begin{cases}
0 & \text{if $\forall k, j_k\neq\ell$,} \\
1 & \text{otherwise}.
\end{cases}
\end{equation*}
for $2\leq\ell\leq N$. The last determinant in the previous equals is
nothing else than $V(\uple{x})$. All the other determinants vanish:
indeed, if
\begin{equation*}
  \ell_0(\uple{j})\coloneqq\min{\left\{2\leq\ell\leq N, \epsilon(\uple{j})=1\right\}}
\end{equation*}
then 
\begin{multline*}
  \det\left(\uple{x}(N-1,N-2+\epsilon_2(\uple{j}),\dots,1+\epsilon_{N-1}(\uple{j}),\epsilon_N(\uple{j}))\right) \\
  =\det\left(\uple{x}(N-1,N-2,\dots,N-(\ell_0(\uple{j})-1),N-\ell_0(\uple{j})+1,\dots,1+\epsilon_{N-1}(\uple{j}),\epsilon_N(\uple{j}))\right)=0,
\end{multline*}
since there are two identical rows in the matrix.
\end{proof}
\section{Generating series involving the multiple divisor functions}\label{sec_C}%

We begin by recalling a formula for the value of the multiple divisor
functions at prime powers.

\begin{lemma}\label{lemma_d_N_p_k}
For $N\geq 2$, $k$ a non-negative integer and a prime number $q$, 
\begin{equation}
  d_N(q^k)
  =\binom{N-1+k}{k}
=\frac{N(N+1)\cdots (N+k-1)}{k!}.
\end{equation}
\end{lemma}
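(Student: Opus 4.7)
My plan is to give the standard combinatorial/generating-function argument, which is elementary once one recalls the definition of $d_N$ as the $N$-fold Dirichlet convolution of the constant function $1$.

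First, I would recall that $d_N(n)$ counts the number of ordered $N$-tuples $(a_1,\ldots,a_N)$ of positive integers with $a_1\cdots a_N=n$. Specializing at $n=q^k$ for $q$ prime, each $a_i$ must be a power of $q$, say $a_i=q^{e_i}$ with $e_i\geq 0$, and the condition $a_1\cdots a_N=q^k$ becomes $e_1+\cdots+e_N=k$. Hence
\begin{equation*}
d_N(q^k)=\#\bigl\{(e_1,\ldots,e_N)\in\Z_{\geq 0}^N \,:\, e_1+\cdots+e_N=k\bigr\},
\end{equation*}
and the classical stars-and-bars formula gives this count as $\binom{N-1+k}{k}$, which concludes the proof.

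Alternatively, and perhaps more in keeping with the analytic spirit of the paper, I would observe that the local Euler factor of $\zeta(s)^N$ at $q$ is $(1-q^{-s})^{-N}$, and expanding the binomial series yields
\begin{equation*}
(1-T)^{-N}=\sum_{k\geq 0}\binom{N-1+k}{k}T^k,
\end{equation*}
so comparing coefficients with $\sum_{k\geq 0}d_N(q^k)T^k$ (the local factor of the Dirichlet series $\sum_n d_N(n)n^{-s}=\zeta(s)^N$ at $q$, with $T=q^{-s}$) gives the result. A third option is a short induction on $N\geq 2$ using the recursion $d_N(q^k)=\sum_{j=0}^k d_{N-1}(q^j)$ together with the hockey-stick identity $\sum_{j=0}^k\binom{N-2+j}{j}=\binom{N-1+k}{k}$.

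There is no real obstacle here: the statement is purely combinatorial and each of the three approaches is a standard two-line argument. I would present the stars-and-bars version as the shortest and most transparent, and remark that the last equality in the statement follows from rewriting $\binom{N-1+k}{k}=\frac{(N+k-1)!}{k!(N-1)!}=\frac{N(N+1)\cdots(N+k-1)}{k!}$.
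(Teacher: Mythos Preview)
Your proposal is correct and is essentially the same as the paper's proof: the paper simply remarks that $d_N(q^k)$ equals the number of monomials of degree $k$ in $N$ variables, which is precisely your stars-and-bars count of tuples $(e_1,\ldots,e_N)\in\Z_{\geq 0}^N$ with $e_1+\cdots+e_N=k$. Your alternative arguments are fine too, but the first one matches the paper's one-line justification exactly.
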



\begin{proof}[\proofname{} of lemma \ref{lemma_d_N_p_k}]%
  This amounts to the formula for the number of monomials of degree
  $k$ in $N$ variables, which is well-known.
\end{proof}


We now prove a formula for the generating function of the square of
the divisor function:

\begin{proposition}\label{propo_d_N_series}
For $q$ a prime number, one has
\begin{equation*}
\sum_{k\geq 0}d_N(q^k)^2T^k=\frac{P_N(T)}{(1-T)^{2N-1}}
\end{equation*}
where 
$$
P_N(T)=\sum_{k=0}^{N-1}\binom{N-1}{k}^2T^k\in\Z[T].
$$
\par
In particular, we have $P_N(t)>0$ for $t>0$.  Moreover, the constant
term of $P_N(T)$ is equal to $1$ and the coefficient of $T$ is
$(N-1)^2$.
\end{proposition}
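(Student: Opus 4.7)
The plan is to identify the generating function as a Gauss hypergeometric series and then apply Euler's transformation. By Lemma~\ref{lemma_d_N_p_k}, we have $d_N(q^k) = \binom{N-1+k}{N-1} = (N)_k/k!$, where $(N)_k = N(N+1)\cdots(N+k-1)$ denotes the Pochhammer symbol. Consequently,
\begin{equation*}
\sum_{k\geq 0} d_N(q^k)^2\,T^k = \sum_{k\geq 0}\frac{(N)_k\,(N)_k}{(1)_k\,k!}\,T^k = {}_2F_1(N,N;1;T).
\end{equation*}

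Next I would invoke Euler's transformation ${}_2F_1(a,b;c;z) = (1-z)^{c-a-b}\,{}_2F_1(c-a,c-b;c;z)$ applied with $(a,b,c) = (N,N,1)$, which yields
\begin{equation*}
{}_2F_1(N,N;1;T) = (1-T)^{1-2N}\,{}_2F_1(1-N,1-N;1;T).
\end{equation*}
Since $1-N$ is a non-positive integer, the hypergeometric series on the right-hand side terminates at $k = N-1$. Using the elementary identity $(1-N)_k/k! = (-1)^k\binom{N-1}{k}$, this terminating series evaluates to
\begin{equation*}
{}_2F_1(1-N,1-N;1;T) = \sum_{k=0}^{N-1}\frac{((1-N)_k)^2}{(k!)^2}\,T^k = \sum_{k=0}^{N-1}\binom{N-1}{k}^2\,T^k = P_N(T),
\end{equation*}
which gives exactly the stated factorization.

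The remaining assertions of the proposition are immediate from this explicit formula for $P_N(T)$: the coefficients $\binom{N-1}{k}^2$ are non-negative (and $P_N(0)=1\not=0$), so $P_N(t)>0$ for all $t>0$; the constant term is $\binom{N-1}{0}^2 = 1$; and the coefficient of $T$ is $\binom{N-1}{1}^2 = (N-1)^2$.

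There is no substantial obstacle. The only conceptual point is recognising the generating function as the specific Gauss hypergeometric ${}_2F_1(N,N;1;T)$, after which Euler's transformation does all the work. A fully elementary alternative, if one prefers to avoid hypergeometric language, is to verify directly the equivalent polynomial identity $(1-T)^{2N-1}\sum_{k\geq 0}\binom{N-1+k}{N-1}^2 T^k = P_N(T)$ by expanding the Cauchy product on the left and checking that the coefficient of $T^k$ equals $\binom{N-1}{k}^2$ for $0 \leq k \leq N-1$ and vanishes for $k \geq N$; this reduces to a finite binomial identity provable by induction on $N$, but the hypergeometric route is noticeably cleaner.
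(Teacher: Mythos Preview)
Your proof is correct and follows essentially the same route as the paper: both identify the generating series as ${}_2F_1(N,N;1;T)$, recognise $P_N(T)$ as the terminating series ${}_2F_1(1-N,1-N;1;T)$, and conclude via Euler's transformation (the paper even cites the same identity from Gradshteyn--Ryzhik). Your added remark about a possible elementary verification via Cauchy products is extra but not needed.
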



\begin{proof}[\proofname{} of proposition \ref{propo_d_N_series}]%
Let
$$
{}_2F_1(u,v;1;z)=
\sum_{k\geq 0}\frac{u(u+1)\cdots
  (u+k-1)v(v+1)\cdots (v+k-1)}
{(k!)^2}z^k
$$
denote (a special case of) the classical Gauss hypergeometric
function. By the previous lemma, we have
$$
\sum_{k\geq 0}d_N(q^k)^2T^k={}_2F_1(N,N;1;T).
$$
\par 
Since
$$
\sum_{k=0}^{N-1}\binom{N-1}{k}^2T^k=
\sum_{k\geq 0}\binom{N-1}{k}^2T^k={}_2F_1(-(N-1),-(N-1);1;T),
$$
the formula we claim is
$$
{}_2F_1(-(N-1),-(N-1);1;T)=(1-T)^{2N-1}{}_2F_1(N,N;1;T),
$$
which is a special case of the formula known as Euler's transformation
for the hypergeometric function (see, e.g.,~\cite[9.131.1 (3)]{GR00}).
\end{proof}

\bibliographystyle{plain}
\bibliography{biblio}
\end{document}